\newtheorem{theorem}{Theorem}[section]
\newtheorem{lemma}[theorem]{Lemma}
\newtheorem{proposition}[theorem]{Proposition}
\newtheorem{corollary}[theorem]{Corollary}
\newtheorem{remark}[theorem]{Remark}
\newtheorem{notation}[theorem]{Notation}
\newcommand{\fld}{{\mathbb{F}}}
\begin{document}

\title[Invariant Fields of classical Sylow groups]{The Invariant Fields of the Sylow groups of Classical Groups
in the natural characteristic}
\author{Jorge N M Ferreira}
\address{
Centro de Compet\^{e}ncias de Ci\^{e}ncias Exactas e da Engenharia\\
Universidade da Madeira\\
Campus Universit\'{a}rio da Penteada
9000-390 Funchal\\
Portugal}

\thanks{Jorge Ferreira was supported by Funda\c{c}\~{a}o para a Ci\^{e}ncia e Tecnologia for the financial support provided, through Programa Operacional Potencial Humano (POPH) do Fundo Social Europeu (FSE), by the PhD grant SFRH/BD/30132/2006.}

\author{Peter Fleischmann}
\address{
School of Mathematics, Statistics and Actuarial Science\\
Cornwallis Building\\
University of Kent\\
CT2 7NF Canterbury\\
United Kingdom
}

\begin{abstract}
Let $X$ be any finite classical group defined over a
finite field of characteristic $p>0$.  In this paper we
determine the fields of rational invariants for the Sylow
$p$-subgroups of $X$, acting on the natural module.
In particular we prove that these fields are generated
by orbit products of variables and certain invariant
polynomials which are images under Steenrod operations,
applied to the respective invariant linear forms defining
$X$.
\end{abstract}
\maketitle


\section{Introduction}

Let $\fld$ be a field, $V$ a finite dimensional $\fld$-vector space and
$G$ a finite subgroup of ${\rm GL}(V)$. Then $G$ acts naturally on the dual
space $V^*$ of $V$ and therefore on the symmetric algebra
$S:=\mathbb{F}[V]:={\rm Sym}(V^*)$, by graded algebra automorphisms. One of the
main problems of invariant theory is the investigation of the structure
of the \emph{ring of invariants}
$$R:=\mathbb{F}[V]^G:=\{f\in \mathbb{F}[V]\ |\ g\cdot f=f\ \forall\ g\in G\}.$$
Since $G$ is finite it is easy to see that $S$ is a finitely generated $R$-module,
which implies, by a classical result of Emmy Noether, that $R$ is a finitely generated
$\fld$-algebra. Let $\mathbb{L}:={\rm Quot}(S)$ be the quotient field of $S$
and $\mathbb{K}:={\rm Quot}(R)$ the quotient field of $R$. The finiteness of $G$ implies
that $\mathbb{K}=\mathbb{L}^G$ and therefore, by Artin's
main theorem in Galois Theory, that the field extension $\mathbb{L}\ge \mathbb{K}$
is Galois with group $G$. Moreover, it is well known that $R$ is a normal domain,
i.e. $R$ is integrally closed in $\mathbb{K}$.
\\
There are several constructive procedures that, if applied to ring elements
$f\in S$, transform them into invariants in $R$: two examples are
the \emph{transfer}- or \emph{trace map} $f\mapsto {\rm tr}(f)=\sum_{g\in G} g\cdot f$ and
the \emph{norm} $f\mapsto {\rm Norm}(f):=\prod_{g\in G}g\cdot f$. If $|G|$ is a unit
in $\fld$, then ${\rm tr}(S)=R$, otherwise, ${\rm tr}(S)$ is a proper ideal in $R$.
In general, as a result of such operations, one obtains the subalgebra $A\le R$
generated by those invariants, but the major open question remains, when to stop,
i.e. when a full finite set of \emph{generating invariants} of $R$ as a $\fld$-algebra
has been achieved. In certain cases one can use information on the invariant field
$\mathbb{K}$:
For example, the following are equivalent (see \cite{homloc})
\begin{itemize}
\item[(i)] ${\rm Quot}(A) = \mathbb{K}$ and $R$ is integral over $A$;
\item[(ii)] $R$ is the integral closure of $A$ in $\mathbb{L}$;
\item[(iii)] there exists $0\ne a\in A$ with $R=S\cap \frac{1}{a}A$.
\end{itemize}
The intersection in (iii) can in principle be calculated by
Groebner basis methods (\cite{homloc}) and there are also generic
algorithms available to calculate integral closures appearing in (ii)
(see \cite{Jong98}). It is however still a difficult task to determine
$R$ in general, from information on $A$. Nevertheless, in the pursuit of
constructing $R$ it is an important first step to find an explicit description
of the invariant field $\mathbb{K}$. In this paper this is done for all the
$p$-Sylow groups of finite classical groups and $p$ the characteristic of the
field of definition.
\\[2mm]
Before stating the main result in compact form we need a few remarks on the
groups considered, some known results and some ideas that motivated
this work.
\\
Let $X={\rm GL}_n(q)$ with $q=p^s$ acting on $V=\mathbb{F}_q^n$ and
$U$ the Sylow $p$-subgroup of $X$ formed by the upper uni-triangular matrices. Dickson in 1911 proved that the invariant
ring $\mathbb{F}_q[V]^X$ is the polynomial ring  $\mathbb{F}_q[c_0,\ldots, c_{n-1}]$ on generators
$c_i$ of degree $q^n-q^i$ (see \cite{Dickson11}).
Let $U(n,q)$ be the group of lower triangular matrices with ones along the diagonal and $x_1,\ldots,x_n$
a basis for the dual vector space $V^*$. Then $x_1$ is invariant and the orbit of each $x_i$, with $i>1$,
consists of all elements $x_i+w$ where $w$ belongs to the subspace $V_{i-1}$ spanned by $x_1,\ldots,x_{i-1}$.
The orbit product of each $x_i$ is
$N(x_i)=\prod_{w\in V_{i-1}}(x_i+w)=F_{i-1,q}(x_i),$
where $F_{i-1,q}(X)$ is the polynomial (\ref{eq:Fn(X)}) in \emph{section \ref{sec:invfields}}.
It can be easily proven that the polynomials $N(x_i)$ are homogeneous of degree $q^{i-1}$ and the product of their degrees is equal to the order of $U(n,q)$. Applying Theorem \ref{the:polynomialring} we conclude that
$\mathbb{F}_q[V]^{U(n,q)}=\mathbb{F}_q[N(x_1),N(x_2),\ldots,N(x_n)],$
which is a polynomial ring. \\
There is a particularly useful structure, present in invariant theory over the finite field
$\mathbb{F}_q$: Let ${\frak F}:=\mathbb{F}_q[V]$. Then the \emph{$q$ - Steenrod algebra ${\mathcal A}:={\mathcal A}_q$} is the graded $\mathbb{F}_q$-subalgebra
${\mathcal A} = \mathbb{F}\langle{\mathcal P}^i\ |\ i\in \mathbb{N}_0\rangle \le {\rm End}_{\mathbb{F}_q}({\frak F}),$
generated by the homogeneous \emph{Steenrod operators} $\mathcal P^i$ of degree $i(q-1)$, which
themselves are uniquely determined as elements of ${\rm End}_{\mathbb{F}_q}({\frak F})$, by
the following rules:
\begin{enumerate}
\item ${\mathcal P}^0={\rm id}_{{\frak F}}$;
\item the Cartan identity ${\mathcal P}^i(fg)=\sum_{0\le r,s\atop r+s=i} {\mathcal P}^r(f){\mathcal P}^s(g)$;
\item ${\mathcal P}^1(x_j)=x_j^q$ and ${\mathcal P}^k(x_j)=0,\ \forall k>1,j\ge 1$.
\end{enumerate}
The elements ${\mathcal P}^i$ are also uniquely determined by the requirement that
$${\mathcal P}(\zeta):\ {\frak F} \to {\frak F}[[\zeta]],\ f\mapsto \sum_{i\ge 0} {\mathcal P}^i(f)\zeta^i$$
is the unique homomorphism of $\mathbb{F}$ - algebras which maps $v$ to $v+v^q\zeta$ for each
$v\in \langle x_1,x_2,\ldots
,x_n \rangle_{\mathbb{F}}$. From this it is easy to see that
the ${\mathcal A}$ acts on $\mathbb{F}_q[V]$, commuting with the natural action
of ${\rm GL}(V)$. Therefore if $G\le {\rm GL}(V)$, then ${\mathcal A}$ also acts
on $\mathbb{F}_q[V]^G.$
\\
Now let $X$ be any of the following finite classical groups:
\begin{itemize}
\item the general unitary groups $GU(2m,q^2)$ and $GU(2m+1,q^2)$ of dimension
$2m$ and $2m+1$, defined over the field $\mathbb{F}_{q^2}$,
\item the symplectic group $Sp(2m,q)$ of dimension $2m$ over $\mathbb{F}_q$,
\item the general orthogonal groups $O^+(2m,q)$, $O^-(2m+2,q)$ and $O(2m+1,q)$  defined
over $\mathbb{F}_q$.
\end{itemize}
For more details we refer to section four, but typically $X$ is defined as a subgroup
of ${\rm GL}(V)$, fixing a certain form $h\in V^*$ or, in the case of unitary groups,
a homogeneous element $h\in\mathbb{F}_{q^2}[V]$. In other words
$X={\rm Stab}_{{\rm GL}(V)}(h)$, hence for any subgroup $G\le X$,
automatically $h$ is a $G$-invariant and so are the ``Steenrod images" $\mathcal P^i(h)$.
The explicit description of the ring of invariants of the groups $Sp(2m,q)$ (see \cite{CK} and
\cite{DBen93}) and
$GU(n,q^2)$ (see \cite{ChuJow06}) supports the conjecture that invariant rings of classical groups are
always generated by ``Dickson invariants" together with certain Steenrod images $\mathcal P^i(h)$
of the relevant form. Replacing Dickson invariants by ``orbit products of variables"
a similar conjecture can be made about the invariant rings of Sylow $p$-groups of $X$.
We will give some evidence to this by proving the corresponding result for the invariant fields.
\\
It is well known that invariant fields of finite $p$-groups in characteristic $p$ are purely
transcendental (see \cite{Miyata71}). The main result of this paper will describe
transcendence bases consisting of certain explicit orbit products $N(x_i)$, called
``norms" and of invariants $h_i$, which are images of $h$ under certain
Steenrod operators. Now let $G\le X$ be a Sylow $p$-group for $p={\rm char}(\mathbb{F})$.
If $X=O^+(2m,2^e)$ or $X=O^-(2m+2,2^e)$ we also define distinguished maximal subgroups
$G_1\unlhd G$ (see Lemmas \ref{le:invfieldO+evenG1} and \ref{le:invfieldO-evenG1}).
Now set $\frak{G}=G_1$ if $X=O^+(2m,2^e)$ or $X=O^-(2m+2,2^e)$ and $\frak{G}=G$ otherwise.
Then our main result can be stated in short form as follows:

\begin{theorem}\label{main_compact}
The  invariant field $\mathbb{F}(V)^\frak{G}={\rm Quot}(\mathbb{F}[V]^\frak{G})$ is
purely transcendental, generated by $\frak{G}$-orbit products of variables and Steenrod images of
the form $h$ defining $X$.
\end{theorem}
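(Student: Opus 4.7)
My plan is to proceed by case analysis over the families of classical groups listed in the introduction, reducing the statement each time to a Galois-theoretic degree count. Since $\mathfrak{G}$ acts faithfully on $V$, the extension $\mathbb{L}/\mathbb{K}$ is Galois of degree $|\mathfrak{G}|$, so to show that a subfield $\mathbb{F}(f_1,\ldots,f_n)\subseteq \mathbb{K}$ equals $\mathbb{K}$ it suffices to exhibit algebraically independent invariants $f_1,\ldots,f_n$ whose product of ``leading degrees'' in a suitable triangular structure equals $|\mathfrak{G}|$. Such a family automatically yields pure transcendence, refining Miyata's theorem with explicit generators.

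In each case I would fix an explicit Sylow $p$-subgroup of $X$, typically as the upper unitriangular matrices preserving $h$ in a hyperbolic or isotropic basis $x_1,\ldots,x_n$ of $V^*$. The model to imitate is the $U(n,q)$-example recalled in the introduction: whenever the $\mathfrak{G}$-orbit of $x_i$ has the form $x_i+W_i$ for some $\mathbb{F}$-subspace $W_i\subseteq \langle x_1,\ldots,x_{i-1}\rangle$, the orbit product $N(x_i)=\prod_{w\in W_i}(x_i+w)$ is monic of degree $|W_i|$ in $x_i$ and slots into a triangular system. Where this alone does not produce enough degree to reach $|\mathfrak{G}|$, the missing invariants are to be supplied by Steenrod images $\mathcal{P}^{\alpha}(h)$ of the defining form; these are $X$-invariant because each $\mathcal{P}^i$ commutes with $\operatorname{GL}(V)$, and their leading monomials can be tracked through the defining homomorphism $v\mapsto v+v^q\zeta$. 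The core bookkeeping is to identify for each constructed invariant a lex-leading variable so that together they form a regular triangular system, then match $\prod_i \deg_{x_i}(f_i)$ against $|\mathfrak{G}|$ using the known Sylow orders of classical groups.

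I expect the main obstacle to arise in the orthogonal cases $O^{+}(2m,2^e)$ and $O^{-}(2m+2,2^e)$ in characteristic two, which is precisely where the theorem restricts to $\mathfrak{G}=G_1$ rather than the full Sylow $G$. There the Steenrod images of the quadratic form $h$ do not, on their own, contribute enough fresh algebraically independent invariants to account for the full order $|G|$, reflecting the distinctive interaction between a characteristic-two quadratic form and its polar bilinear form. Descending to the distinguished index-two normal subgroup $G_1\unlhd G$ removes one factor of two from the target order and, as flagged in the lemmas cited in the statement, should restore the balance. The delicate technical step is therefore to pin down $G_1$ explicitly, verify its normality and index, and check that the orbit-plus-Steenrod recipe does balance for it; the remaining classical families should admit a uniform treatment along the lines sketched above, amounting to explicit but essentially routine orbit and degree computations.
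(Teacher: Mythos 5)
Your general framework --- triangularise $\mathfrak{G}$, produce invariants $f_j\in\mathbb{F}[x_1,\ldots,x_j]$, and bound $[\mathbb{F}(V):\mathbb{F}(f_1,\ldots,f_n)]\le\prod_j\deg_{x_j}(f_j)$ against $|\mathfrak{G}|$ --- is sound, and it is close to what the paper does via the Campbell--Chuai result (Theorem \ref{the:invfieldgen}); the paper actually proves the stronger statement that each chosen $\phi_j$ has minimal $x_j$-degree, using the abelian subgroups $H^{\pm}$ and Proposition \ref{prop:minimaldegrees}, whereas with your bare degree count those lower bounds could be skipped, since the degrees of the paper's generators do multiply exactly to $|\mathfrak{G}|$. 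The genuine gap is in your supply of invariants for the upper half of the variables. The Steenrod images of $h$ are not triangular: every $h_k$ (for instance $\Lambda_{k,0}=\sum_{i=1}^m\bigl(x_{n-i+1}^{q^{2k-1}}x_i+x_{n-i+1}x_i^{q^{2k-1}}\bigr)$ in the unitary case) involves the last variable $x_n$, so $h_1,\ldots,h_{m-1}$ all ``lead'' in $x_n$ and none of them can occupy the slots for $x_{m+2},\ldots,x_{n-1}$ in a triangular tower; the degree estimate needs $f_j\in\mathbb{F}[x_1,\ldots,x_j]$, not merely a designated lex-leading variable, and there is no general field-theoretic bound coming from leading monomials alone. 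What is missing is the paper's family of $\mathbb{F}$-algebra endomorphisms $\psi_l\colon x_i\mapsto F_{l,r}(x_i)$: they commute with the group action (Proposition \ref{prop:psil}), annihilate $x_1,\ldots,x_l$, and, because $h$ pairs $x_i$ with $x_{n-i+1}$, they send $h_1$ to an invariant lying in $\mathbb{F}[x_1,\ldots,x_{n-l}]$ whose $x_{n-l}$-degree is exactly the required power of $q$ (Proposition \ref{prop:degreesphijOmegaGammaDelta}); only afterwards is $\psi_l(h_1)$ rewritten as a polynomial in the norms and the genuine Steenrod images $h_k$ (Lemma \ref{cor:psilOmegaGammaDeltaalgebramemb}) to reach the generating set asserted in the theorem. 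Without this elimination device, or an equivalent construction, your ``essentially routine orbit and degree computations'' cannot be completed.

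A smaller point: your diagnosis of the characteristic-two orthogonal cases is slightly off. For $X=O^{+}(2m,2^e)$ or $O^{-}(2m+2,2^e)$ the full Sylow group $G=\langle G_1,L\rangle$ still has an explicitly computable, purely transcendental invariant field: the paper obtains it by Galois descent, taking $\langle L\rangle$-invariants of the polynomial ring generated by the $G_1$-generators, which only replaces the middle norms by symmetric expressions such as $N(x_m)+N(x_{m+1})$ and $N(x_m)N(x_{m+1})$ (Theorems \ref{the:invfieldO+even} and \ref{the:invfieldO-even}). The compact statement restricts to $\mathfrak{G}=G_1$ because these symmetrised generators are no longer literally orbit products of variables or Steenrod images of $h$, not because the degree balance fails for $G$.
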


\begin{corollary}\label{main_compact_cor}
Let $R$ be the subalgebra of $\mathbb{F}[V]^\frak{G}$,
generated by $\frak{G}$-orbit products of variables and Steenrod images of
the form $h$ defining $X$. Then
$\mathbb{F}[V]^\frak{G}$ is equal to the integral closure of $R$ in its fraction field.
\end{corollary}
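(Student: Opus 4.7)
The plan is to apply the equivalence (i)$\Leftrightarrow$(ii) recorded in the introduction (see \cite{homloc}), taking the subalgebra $A$ there to be our $R$ and the ambient invariant ring to be $\mathbb{F}[V]^\frak{G}$. Two things must be verified: (a) ${\rm Quot}(R)=\mathbb{F}(V)^\frak{G}$, and (b) $\mathbb{F}[V]^\frak{G}$ is integral over $R$. Condition (a) is immediate from Theorem \ref{main_compact}: the generators of $R$ listed in the statement of the corollary are precisely the elements identified there as generating the invariant field.

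For (b) I would prove the stronger claim that the entire polynomial algebra $\mathbb{F}[V]=\mathbb{F}[x_1,\dots,x_n]$ is integral over $R$, from which integrality of the subring $\mathbb{F}[V]^\frak{G}$ follows immediately. Ordering the dual basis compatibly with the triangular action of $\frak{G}$, so that $\frak{G}\cdot x_i\subseteq x_i+\langle x_1,\dots,x_{i-1}\rangle_{\mathbb{F}}$ for every $i$, one has
$$N(x_i)\;=\;\prod_{y\in\frak{G}\cdot x_i}y\;=\;x_i^{d_i}+c_{i,1}\,x_i^{d_i-1}+\dots+c_{i,d_i},$$
with $d_i=|\frak{G}\cdot x_i|$ and coefficients $c_{i,j}\in\mathbb{F}[x_1,\dots,x_{i-1}]$. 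This exhibits $x_i$ as a root of the monic polynomial $T^{d_i}+c_{i,1}T^{d_i-1}+\dots+c_{i,d_i}-N(x_i)$, whose coefficients lie in $\mathbb{F}[x_1,\dots,x_{i-1}][N(x_i)]$. Since $N(x_i)\in R$ by hypothesis, a straightforward induction on $i$ yields that each $x_i$, and hence all of $\mathbb{F}[V]$, is integral over $R$.

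Combining (a) and (b) via (i)$\Rightarrow$(ii), one obtains $\mathbb{F}[V]^\frak{G}\subseteq\overline{R}$, where $\overline{R}$ denotes the integral closure of $R$ inside ${\rm Quot}(R)=\mathbb{F}(V)^\frak{G}$. The reverse containment is formal: $\mathbb{F}[V]^\frak{G}$ is a normal domain (a standard property of invariant rings of finite groups, since $\mathbb{F}[V]^\frak{G}=\mathbb{F}[V]\cap\mathbb{F}(V)^\frak{G}$ is an intersection of integrally closed subrings of $\mathbb{L}$), it contains $R$, and it has the same fraction field as $R$, so every element of $\mathbb{F}(V)^\frak{G}$ integral over $R$ is \emph{a fortiori} integral over $\mathbb{F}[V]^\frak{G}$ and therefore lies in $\mathbb{F}[V]^\frak{G}$. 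The only genuinely nontrivial ingredient is the triangular orbit description invoked in step (b); but this geometric property of the Sylow $p$-subgroup action is exactly what the later sections establish in the course of proving Theorem \ref{main_compact}, so the corollary follows at once from the main theorem.
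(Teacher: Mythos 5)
Your proof is correct, and its overall skeleton coincides with the paper's: equality of fraction fields from Theorem \ref{main_compact}, integrality of $\mathbb{F}[V]^\frak{G}$ over $R$, and then normality of $\mathbb{F}[V]^\frak{G}$ to get the reverse inclusion. The one genuine difference lies in how integrality is established. The paper cites Campbell--Wehlau (Theorem 4.0.3) to conclude that $R$ contains a homogeneous system of parameters of $\mathbb{F}[V]^\frak{G}$, whereas you prove directly, using the triangular shape of the $\frak{G}$-action, that each $x_i$ is a root of the monic polynomial $T^{d_i}+c_{i,1}T^{d_i-1}+\cdots+c_{i,d_i}-N(x_i)$ and hence that all of $\mathbb{F}[V]$ (so in particular $\mathbb{F}[V]^\frak{G}$) is integral over the orbit-product subalgebra. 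Your route is self-contained and slightly stronger, at the cost of a short induction; the paper's is shorter but rests on an external reference. One remark: your opening appeal to the equivalence (i)$\Leftrightarrow$(ii) from the introduction is dispensable (and, as printed there, (ii) speaks of the closure in $\mathbb{L}$ rather than in ${\rm Quot}(A)$, so it would need a minor adjustment to apply verbatim); but since you then verify both inclusions directly --- $\mathbb{F}[V]^\frak{G}$ lies in the closure because it is integral over $R$ and contained in ${\rm Quot}(R)$, and the closure lies in $\mathbb{F}[V]^\frak{G}$ by normality --- the argument is complete without it.
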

\begin{proof}
By Theorem \ref{main_compact}, $R$ and $\mathbb{F}[V]^\frak{G}$
have the same Quotient field, say $\mathbb{K}$. It follows from \cite{Campbell_Wehlau_book} Theorem 4.0.3. pg. 60,
that $R$ contains a homogeneous system of parameters of $\mathbb{F}[V]^\frak{G}$,
which is therefore integral over $R$.
Let $f\in \mathbb{K}$ be integral over $R$, then $f$ is integral over
$\mathbb{F}[V]^\frak{G}$ and therefore contained in the normal ring $\mathbb{F}[V]^\frak{G}$.
\end{proof}

\begin{remark}
\begin{enumerate}
\item Explicit generators for all $\mathbb{F}(V)^\frak{G}$ are described in Theorems
\ref{the:invariantfieldGUeven}, \ref{the:invariantfieldGUodd},
\ref{the:invfieldsSp},\ref{the:invfieldO+odd},\ref{the:invfieldO-odd}.
\item The generators for $\mathbb{F}(V)^\frak{G}$, when $X=O^+(2m,2^e)$ or $X=O^-(2m+2,2^e)$, are described
in Lemmas \ref{le:invfieldO+evenG1} and \ref{le:invfieldO-evenG1}.
\item Let $x_1,\cdots,x_n$ be a basis of $V^*$,
then one can choose $R=\mathbb{F}[N(x_1),\cdots,N(x_n),h_1,h_2,\cdots,h_k],$
where $N(x_i)$ is the $\frak{G}$-orbit product of $x_i$,
$k=\displaystyle\frac{n}{2}-1$ if $n$ is even, $k=\displaystyle\frac{n-1}{2}$ if $n$ is odd,
and the $h_i$'s are Steenrod images of the form $h$, described in the theorems mentioned above.
\item The generators for $\mathbb{F}(V)^G$, when $X=O^+(2m,2^e)$ or $X=O^-(2m+2,2^e)$, are described
in Theorems \ref{the:invfieldO+even} and \ref{the:invfieldO-even}.
\end{enumerate}
\end{remark}

The precise statements and their proofs need explicit descriptions of the Sylow $p$-groups
and will therefore be formulated after those details have been established.
The further organization of the paper is as follows:
\\[1mm]
In \emph{section one} we will introduce notation and collect some information
due to the special nature of classical groups over finite fields and of
$p$-groups in characteristic $p$.
In \emph{section two} we will give two brief examples in small rank, to illustrate the general strategy of our proof. In \emph{section three} we will develop an explicit description of the Sylow $p$-groups
in terms of (almost always) lower-uni-triangular matrices. Although the Sylow $p$-groups of classical groups are known in principle, their structure is usually described in terms of ``root subgroups", defined in the context of the
theory of finite groups of Lie type (\cite{Carter:b}). Since our methods rely
on explicit calculations, a description in terms of matrices is necessary,
but not easily available in the literature. To avoid unnecessary
repetitions later on our emphasis was to achieve such a description in a form
as unifying as possible. Therefore the results stated in \emph{section $3$} can be
useful for other purposes, that require explicit matrix calculations in those
groups. In \emph{section $4$} we state and prove the precise versions of
Theorem \ref{main_compact} for each Sylow $p$-subgroup.
In \emph{section $5$ and $6$} we present the technical proofs for the results of \emph{section $3$ and $4$} respectively. \\
In special cases of classical Sylow $p$-groups, we have been able to
use the results presented here to determine the rings of invariants $R^G$
for arbitrary $q$, using SAGBI-basis techniques. We will present those
results elsewhere. The general problem of determining
the rings of invariants for all Sylow $p$-groups of classical groups is still unsolved as yet.

\section{Invariant fields of $p$-groups and two small examples}
From now on, throughout the whole paper, let $\fld$ be a field of characteristic $p>0$,
$V$ a finite dimensional $\fld$-vector space and $G\le {\rm GL}(V)$ a finite $p$-group.
We have mentioned that the invariant field $\fld(V)^G$ is always purely transcendental. Moreover,
it turns out that one can construct a transcendence basis consisting of polynomials
in $\fld[V]^G$ algorithmically. This is due to Campbell \& Chuai \cite{CampChuai07} and Kang \cite{Kang06}.
We now present the algorithm as it is described in \cite{CampChuai07}.

Since any $p$-subgroup of $GL(V)$ is triangularizable, there exist a basis $e_1,\ldots,e_n$ for $V$ such that each
element of $G$ is represented by a lower triangular matrix with ones along the diagonal.
Therefore if $x_1,\ldots,x_n$ is the dual basis with respect to $e_1,\ldots,e_n$, then  $(\sigma-1)x_m$ is in the subspace spanned by $x_1,\ldots,x_{m-1}$ for all $\sigma \in G$. From this we can easily see that $x_1$ is invariant.

We define $R[j]:=\mathbb{F}[x_1,\dots,x_j]$ for $0\leq j\leq n$ subject to the convention that $R[0]:=0$. Then  $G$ acts on each ring $R[j]$. For each $j$ we choose an invariant $\phi_j \in R[j]^G$ with the smallest positive degree in $x_j$ among the elements of $R[j]^G$.

\begin{theorem}\label{the:invfieldgen}
Let $G$ be a $p$-group. Then the polynomials $\phi_1,\dots,\phi_n$ defined above generate the invariant field for $G$, i.e.,
$$\mathbb{F}(V)^G=\mathbb{F}(\phi_1,\dots,\phi_n).$$
Moreover, there exists $f\in \mathbb{F}[\phi_1,\dots,\phi_n]$ such that
$$\mathbb{F}[V]^G[f^{-1}]=\mathbb{F}[\phi_1,\dots,\phi_n][f^{-1}].$$
\end{theorem}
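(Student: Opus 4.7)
The plan is to establish the chain
$$|G| \;=\; [\mathbb{F}(V):\mathbb{F}(V)^G] \;\le\; [\mathbb{F}(V):\mathbb{F}(\phi_1,\dots,\phi_n)] \;\le\; d_1 d_2 \cdots d_n \;=\; |G|,$$
with $d_j:=\deg_{x_j}\phi_j$, forcing equality throughout and hence $\mathbb{F}(V)^G=\mathbb{F}(\phi_1,\dots,\phi_n)$. The $\phi_j$ exist (the $G$-orbit product of $x_j$ is a candidate), and $\phi_1,\dots,\phi_n$ are algebraically independent because $\phi_i\in R[j-1]$ for $i<j$ while $\phi_j$ has positive degree in $x_j$. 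The middle inequality follows from the tower
$$\mathbb{F}(V)\;\supseteq\;\mathbb{F}(\phi_1,\dots,\phi_n)(x_1,\dots,x_{n-1})\;\supseteq\;\cdots\;\supseteq\;\mathbb{F}(\phi_1,\dots,\phi_n),$$
where at level $j$, writing $\phi_j$ as a polynomial $\Phi_j(Y)\in R[j-1][Y]$ of degree $d_j$ exhibits $x_j$ as a root of $\Phi_j(Y)-\phi_j$, so the corresponding extension degree is at most $d_j$.

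The crucial identity $\prod_j d_j=|G|$ I would prove by showing $d_j=e_j$, where $e_j:=|G_{j-1}/G_j|$ and $G_j:=\{\sigma\in G\mid \sigma x_i=x_i,\ i\le j\}$ is the pointwise stabilizer; note $G_0=G$ and $G_n=\{1\}$, so $\prod_j e_j=|G|$. A short calculation shows each $G_j$ is normal in $G$, and that the map $\sigma\mapsto c_\sigma$ defined by $\sigma x_j=x_j+c_\sigma$ identifies $G_{j-1}/G_j$ with an $\mathbb{F}_p$-subspace $A_j$ of the $\mathbb{F}$-linear forms in $x_1,\dots,x_{j-1}$ satisfying $\sigma A_j=A_j$ for every $\sigma\in G$. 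Since $\phi_j$ must be invariant under every translation $x_j\mapsto x_j+a$, $a\in A_j$, standard facts about linearized (additive) polynomials in characteristic $p$ force $|A_j|=e_j$ to divide $d_j$, giving $d_j\ge e_j$. For the reverse bound, the additive polynomial $\psi_j:=\prod_{a\in A_j}(x_j+a)\in R[j-1][x_j]$ has coefficients in $R[j-1]^G$ (since $\sigma A_j=A_j$) and satisfies $\sigma\psi_j=\psi_j+\psi_j(c_\sigma)$ for every $\sigma\in G$; the map $\sigma\mapsto\psi_j(c_\sigma)$ vanishes on $G_{j-1}$ (because $0\in A_j$) and thus defines a 1-cocycle of $G/G_{j-1}$ with values in $\mathrm{Frac}(R[j-1])$. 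Additive Hilbert~90 for the Galois extension $\mathrm{Frac}(R[j-1])/\mathrm{Frac}(R[j-1])^{G/G_{j-1}}$ produces $\xi\in\mathrm{Frac}(R[j-1])$ with $(\sigma-1)\xi=\psi_j(c_\sigma)$, so $\psi_j-\xi$ is $G$-invariant; multiplying by the $G$-invariant norm $D=\prod_\sigma\sigma D_0$ of any denominator $D_0$ of $\xi$ yields $D(\psi_j-\xi)\in R[j]^G$ of $x_j$-degree exactly $e_j$. I expect the Hilbert~90 step and the attendant denominator bookkeeping to be the main technical obstacle.

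For the localization statement, by Noether's theorem $\mathbb{F}[V]^G$ is a finitely generated $\mathbb{F}$-algebra, so one may pick algebra generators $b_1,\dots,b_m$. Each $b_i$ lies in the common fraction field $\mathbb{F}(\phi_1,\dots,\phi_n)$ established in the first part, so $b_i=p_i/q_i$ with $p_i,q_i\in\mathbb{F}[\phi_1,\dots,\phi_n]$; setting $f:=\prod_i q_i$ gives $\mathbb{F}[V]^G\subseteq\mathbb{F}[\phi_1,\dots,\phi_n][f^{-1}]$, and the reverse inclusion after inverting $f$ is immediate.
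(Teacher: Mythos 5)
Your proposal is correct, but it is worth pointing out that the paper itself gives no argument here: the proof of Theorem \ref{the:invfieldgen} is just a citation of Theorem 2.4 in \cite{CampChuai07}, whose proof (building on Miyata-type arguments) works by induction on the number of variables, dividing arbitrary invariants by the minimal-degree invariant $\phi_j$ over $R[j-1]$ localized at its leading coefficient in $x_j$, and in particular produces an explicit localizing element $f$. Your route is genuinely different and self-contained: you bound $[\mathbb{F}(V):\mathbb{F}(\phi_1,\dots,\phi_n)]$ by $d_1\cdots d_n$ via the tower argument, and you control the $d_j$ through the filtration by pointwise stabilizers $G_j$, identifying $G_{j-1}/G_j$ with a $G$-stable finite $\mathbb{F}_p$-space $A_j$ of linear forms; the subspace polynomial $\psi_j=\prod_{a\in A_j}(x_j+a)$ (additive, with $G$-invariant coefficients since $\sigma A_j=A_j$), combined with additive Hilbert 90 for $K_{j-1}=\mathrm{Frac}(R[j-1])$ over its fixed field under $G/G_{j-1}$ and an invariant denominator $D$, yields an element of $R[j]^G$ of $x_j$-degree exactly $e_j=|A_j|$, so $d_j\le e_j$ and $\prod_j d_j\le\prod_j e_j=|G|$, which is all the degree count needs; the sandwich then forces $\mathbb{F}(V)^G=\mathbb{F}(\phi_1,\dots,\phi_n)$, and the localization clause follows softly from Noether finite generation by clearing denominators of a finite set of algebra generators. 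Two remarks: your divisibility claim $e_j\mid d_j$ and the algebraic-independence assertion are correct but not actually needed (independence follows anyway once the extension is known to be finite, and only the upper bound $d_j\le e_j$ enters the count); and whereas the cited proof buys an explicit $f$ (a product of leading coefficients), your $f$ is non-constructive, which is however all the statement asks for. As a bonus, your argument pins down the minimal degrees exactly, $d_j=[G_{j-1}:G_j]$, a fact the paper implicitly re-derives case by case in Section 4 via the subgroups $H^{\pm}$ and $L_k^{\pm}$.
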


\begin{proof}
See Theorem 2.4 in \cite{CampChuai07}.
\end{proof}

We now present two small examples to exemplify the main ideas of the paper and how to use Theorem \ref{the:invfieldgen}.
We shall consider a Sylow $p-$subgroup for $GU(8,q^2)$ and $O^+(8,2^e)$. In the next section we show how to construct the Sylow $p-$subgroups of the finite classical groups.

Let $J_n$ be the matrix (\ref{eq:Jm}) and let $G$ be a Sylow $p-$subgroup for $GU(8,q^2)$. Then we can represent its elements as
\begin{equation}\label{eq:sylGU8}
\left(
\begin{tabular}[h]{c|c|c}
$A$ &$0$ & $0$ \\[-0.4 cm]
& \\
\hline
& \\[-0.4 cm]
$B$ &$F$ & $0$
\\[-0.4 cm]
& \\
\hline
& \\[-0.4 cm]
$J_{3}(\bar{A}^{-1})^{T}\bar{S}$ &$D$ &$J_{3}(\bar{A}^{-1})^{T}J_{3}$
\end{tabular}
\right)
\end{equation}
where
		\begin{itemize}
			\item $A\in U(3,q^2)$, $B$ is any $2\times 3$ matrix and $D=-J_{3}(\bar{A}^{-1})^T\bar{B}^TJ_2F$;
			\item $S$ is a $3\times 3$ matrix such that $S+\bar{S}^T=-B^TJ_2\bar{B}$.
			\item $F=\left(
\begin{tabular}[h]{cc}
$1$ & $0$ \\
$c$ &$1$
\end{tabular}
\right)$
where $c$ is an element in $\mathbb{F}_{q^2}$ satisfying $c+\bar{c}=0$.
\end{itemize}
We will always fix the graded reverse lexicographic order with $x_1<x_2<\cdots <x_8$.
By looking to the elements of $G$ we can see that it acts on $\mathbb{F}_{q^2}[x_1,\ldots,x_5]$ as subgroup $U$ of $U(5,q^2)$. We consider the orbit products $N(x_j)$ for $j\in\{1,\ldots,5\}$.
These are homogeneous polynomials and their degree product is equal to the order of $U$. Hence
$$\mathbb{F}_{q^2}[x_1,\dots,x_{5}]^G=\mathbb{F}_{q^2}[x_1,N(x_2),\ldots,N(x_{5})]$$
which is a polynomial ring. Since in the grevlex order their leading monomials are algebraically independent we can take $\phi_j=N(x_j)$ for $j\in\{1,\ldots,5\}$.
Let $H$ be the abelian subgroup of $G$
$$
\left(
\begin{tabular}[h]{c|c|c}
$I_{3}$ &$0$ & $0$ \\[-0.4 cm]
& \\
\hline
& \\[-0.4 cm]
$0$ &$I_2$ & $0$
\\[-0.4 cm]
& \\
\hline
& \\[-0.4 cm]
$J_{3}\bar{S}$ &$0$ &$I_{3}$
\end{tabular}
\right).
$$
We use subgroups of $H$ to determine a lower bound for the degree in $x_6,x_7$ and $x_8$ of $\phi_6,\phi_7$ and $\phi_8$, respectively.
Let $C=J_{3}\bar{S}$. Then
$$C=\left(
\begin{tabular}{ccc}
$s_{1,3}$& $s_{2,3}$ & $s_{3,3}$ \\
$s_{1,2}$ & $s_{2,2}$ & $-\overline{s}_{2,3}$ \\
$s_{1,1}$ & $-\overline{s}_{1,2}$ & $-\overline{s}_{1,3}$
\end{tabular}
\right).$$
We define $C^{(1)}=C$ and for $k=2,3$, $C^{(k)}$ will be the matrix obtained from $C$ by fixing all the entries of the first $k-1$ rows equal to zero. For each $k$, we denote by $L_k$ the subgroup of $H$ obtained by replacing the matrix $C$ by $C^{(k)}$. Note that $L_1=H$.
The groups $L_k$ act on $R[5+k]$ by fixing $x_1,x_2,x_3,x_4,x_5,\ldots x_{5+k-1}$ and
$x_{5+k}\mapsto x_{5+k}+\sum_{j=1}^{4-k}c_{k,j}x_j.$
Therefore, $L_k$ is acting like a subgroup of $U(5+k,q^2)$ with order $q^{7-2k}$. It is not hard to check that the orbit product of $x_{5+k}$ under $L_k$ has degree $q^{7-2k}$. Hence
$$R[5+k]^{L_k}=\mathbb{F}_{q^2}[x_1,x_2,x_3,x_4,x_5,x_{5+1},\ldots,x_{5+k-1},N(x_{5+k})]$$
and since $R[5+k]^{H}\subset R[5+k]^{L_k}$
we conclude that the minimal degree in $x_{5+k}$ of a polynomial in $R[5+k]^{H}$ is greater or equal to $q^{7-2k}$.
Now we consider the polynomials
\begin{itemize}
    \item $h_1=\Lambda_{1,0}=x_8^{q}x_1+x_8x_1^{q}+x_7^{q}x_2+x_7x_2^{q}+x_6^{q}x_3+x_6x_3^{q}+
x_5^{q}x_4+x_5x_4^{q}$;

    \item $h_2=\Lambda_{2,0}=x_8^{q^3}x_1+x_8x_1^{q^3}+x_7^{q^3}x_2+x_7x_2^{q^3}+x_6^{q^3}x_3+x_6x_3^{q^3}+
x_5^{q^3}x_4+x_5x_4^{q^3}$;

    \item $h_3=\Lambda_{3,0}=x_8^{q^5}x_1+x_8x_1^{q^5}+x_7^{q^5}x_2+x_7x_2^{q^5}+x_6^{q^5}x_3+x_6x_3^{q^5}+
x_5^{q^5}x_4+x_5x_4^{q^5}$.
\end{itemize}
The polynomial $h_1$ comes from the hermitian form we used to define the unitary group, thus it is invariant. The other two being Steenrod images of $h_1$ are invariant also.
By considering the action of the $\mathbb{F}_{q^2}$-algebra endomorphisms $\psi_l$ (see Section \ref{sec:invfields}, Proposition \ref{prop:psil}-2, Lemmas \ref{cor:psilOmegaGammaDeltaalgebramemb}, \ref{le:hkinvariantsGUeven} and Proposistion \ref{prop:degreesphijOmegaGammaDelta}) on $h_1,h_2$ and $h_3$ we get for $k\in\{1,2,3\}$
$$\psi_{3-k}(h_1)\in \mathbb{F}_{q^2}[x_1,x_2,\ldots,x_{5+k}]^G$$
and its degree in $x_{5+k}$ is equal to $q^{7-2k}$ by Proposistion \ref{prop:degreesphijOmegaGammaDelta}. Therefore we can take $\phi_6=\psi_2(h_1)$,$\phi_2=\psi_1(h_1)$ and $\phi_8=\psi_0(h_1)=h_1$. Applying Lemma \ref{cor:psilOmegaGammaDeltaalgebramemb}, we see that
$\psi_l(h_1)\in \mathbb{F}_{q^2}[x_1,N(x_2),\ldots,N(x_l),h_1,h_2,\ldots,h_{l+1}]$
and we get
\begin{theorem}
Let $G$ be the Sylow $p$-subgroup of $GU(8,q^2)$. Then
\[
\mathbb{F}_{q^2}(V)^G=\mathbb{F}_{q^2}(x_1,N(x_2),\ldots,N(x_{5}),h_1,h_2,h_3).
\]
\end{theorem}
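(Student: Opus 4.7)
The plan is to apply Theorem~\ref{the:invfieldgen}: it suffices to exhibit, for each $j \in \{1,\ldots,8\}$, a $G$-invariant $\phi_j \in R[j]^G$ whose degree in $x_j$ is the minimum positive value attained on $R[j]^G$; then $\mathbb{F}_{q^2}(V)^G = \mathbb{F}_{q^2}(\phi_1,\ldots,\phi_8)$ and one finishes by a change of generators. The block shape of (\ref{eq:sylGU8}) shows that $G$ acts on $\mathbb{F}_{q^2}[x_1,\ldots,x_5]$ through a subgroup $U$ of $U(5,q^2)$, so $x_1$ is $G$-fixed and, for $2\le j\le 5$, the $G$-orbit of $x_j$ coincides with its $U$-orbit. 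Since the product of the degrees of $N(x_2),\ldots,N(x_5)$ equals $|U|$, the Dickson-type criterion recalled in the introduction identifies $\mathbb{F}_{q^2}[x_1,\ldots,x_5]^G$ with the polynomial ring $\mathbb{F}_{q^2}[x_1, N(x_2),\ldots,N(x_5)]$; in particular $\deg_{x_j} N(x_j)$ is minimal, and I would set $\phi_1 := x_1$ and $\phi_j := N(x_j)$ for $2 \le j \le 5$.

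For $j \in \{6,7,8\}$ the proof would proceed in two matching halves. For the lower bound I would restrict to the abelian subgroup $H$ of $G$ whose only non-trivial block is $C = J_3\bar S$, and to the filtration $L_1 \supset L_2 \supset L_3$ obtained by successively zeroing the top rows of $C$. A direct computation, as in the discussion preceding the theorem, shows that $L_k$ acts on $R[5+k]$ as a unit-triangular group of order $q^{7-2k}$, so $R[5+k]^{L_k}$ is polynomial and the $L_k$-orbit product of $x_{5+k}$ has $x_{5+k}$-degree exactly $q^{7-2k}$. Since $R[5+k]^G \subseteq R[5+k]^{L_k}$, this becomes a lower bound for $\deg_{x_{5+k}} \phi_{5+k}$.

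To realise that bound I would set $\phi_{5+k} := \psi_{3-k}(h_1)$, where the $\psi_l$ are the $\mathbb{F}_{q^2}$-algebra endomorphisms of Proposition~\ref{prop:psil}; these preserve $G$-invariance because $h_1$ is invariant, and Proposition~\ref{prop:degreesphijOmegaGammaDelta} gives $\deg_{x_{5+k}} \psi_{3-k}(h_1) = q^{7-2k}$, matching the lower bound. Theorem~\ref{the:invfieldgen} then yields the equality $\mathbb{F}_{q^2}(V)^G = \mathbb{F}_{q^2}(x_1, N(x_2),\ldots,N(x_5), \psi_2(h_1), \psi_1(h_1), h_1)$, and Lemma~\ref{cor:psilOmegaGammaDeltaalgebramemb} expresses each $\psi_l(h_1)$ as a polynomial in $x_1, N(x_2),\ldots,N(x_l), h_1,\ldots,h_{l+1}$, triangular in the Steenrod images $h_i$. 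Inverting this triangular change of generators rewrites the field just displayed as $\mathbb{F}_{q^2}(x_1, N(x_2),\ldots,N(x_5), h_1, h_2, h_3)$, as required. The step I expect to be the main obstacle is the sharp degree computation for $\psi_{3-k}(h_1)$ in $x_{5+k}$: an off-by-one error in any Frobenius exponent would break the matching with the $L_k$-lower bound and invalidate the application of Theorem~\ref{the:invfieldgen}, and this is precisely what Proposition~\ref{prop:degreesphijOmegaGammaDelta} and Lemma~\ref{le:hkinvariantsGUeven} are engineered to control.
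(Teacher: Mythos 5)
Your proposal is correct and follows essentially the same route as the paper: orbit products $N(x_j)$ for the first five variables via the degree-product criterion, the filtration $L_1\supset L_2\supset L_3$ of the abelian subgroup $H$ to get the lower bounds $q^{7-2k}$, the invariants $\psi_{3-k}(h_1)$ of matching $x_{5+k}$-degree, and the triangular rewriting via Lemma~\ref{cor:psilOmegaGammaDeltaalgebramemb} before applying Theorem~\ref{the:invfieldgen}. No substantive difference from the paper's argument.
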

Now we consider the orthogonal group $O^+(8,q)$ with $q=2^e$. Let $G_1$ be the subgroup of $U(8,q)$ that preserve the quadratic form. Then its elements can be represent as matrices of type $(\ref{eq:sylGU8})$
where
		\begin{itemize}
			\item $A\in U(3,q)$, $B$ is any $2\times 3$ matrix and $D=-J_{3}(A^{-1})^TB^TJ_2$;
			\item $S$ is a $3\times 3$ matrix such that $S+S^T=B^TJ_2B$ and $s_{ii}=b_{1i}b_{2i}$;
			\item $F$ is the identity matrix.
		\end{itemize}
The group $G_1$ is not a Sylow $p$-subgroup of $O^+(8,q)$. To obtain one, we pick the element
$$L:=\left(
\begin{tabular}[h]{c|c|c}
$I_{3}$ & $0$& $0$ \\[-0.4 cm]
& \\
\hline
& \\[-0.4 cm]
$0$ &$J_2$ & $0$
\\[-0.4 cm]
& \\
\hline
& \\[-0.4 cm]
$0$ & $0$&$I_{3}$
\end{tabular}
\right)$$
in the orthogonal group, which has order $2$ and normalises $G_1$. Then $G=<G_1,L>$ is a Sylow $p$-subgroup of $O^+(8,q)$.
Now, the following polynomials are invariant
\begin{itemize}
    \item $h_1=\Omega_{0,1}=x_8x_1+x_7x_2+x_6x_3+x_5x_4$;

    \item $h_2=\Omega_{1,1}=x_8^qx_1+x_8x_1^q+x_7^qx_2+x_7x_2^q+x_6^qx_3+x_6x_3^q+x_5^qx_4+x_5x_4^q$;

    \item $h_3=\Omega_{2,1}=
x_8^{q^2}x_1+x_8x_1^{q^2}+x_7^{q^2}x_2+x_7x_2^{q^2}+x_6^{q^2}x_3+x_6x_3^{q^2}+
x_5^{q^2}x_4+x_5x_4^{q^2}$.
\end{itemize}
Applying similar arguments as before, we can show that
$$\mathbb{F}_{q}(V)^{G_1}=\mathbb{F}_{q}(x_1,N(x_2),\ldots,N(x_{5}),h_1,h_2,h_3).$$
Using Galois theory we obtain that
$\mathbb{F}_{q}(V)^{G}=(\mathbb{F}_{q}(V)^{G_1})^{<L>}$, i.e.,
the fraction field of $R:=\mathbb{F}_{q}[x_1,N(x_2),\ldots,N(x_{5}),h_1,h_2,h_3]^{<L>}.$
It is not hard to check that $<L>$ will fix the elements
$x_1,N(x_2),N(x_{3})$, $h_1$,$h_2$,$h_{3}$ and swap $N(x_{4})$ with $N(x_{5})$.
Hence,
$$R=\mathbb{F}_{q}[x_1,N(x_2),N(x_{3}),N(x_{4})+N(x_{5}),N(x_{4})N(x_{5}),h_1,h_2,h_3]$$
\begin{theorem}
Let $G$ be the Sylow $p$-subgroup of $O^+(8,q)$. Then
\[
\mathbb{F}_{q}(V)^G=\mathbb{F}_{q}(x_1,N(x_2),N(x_{3}),N(x_{4})+N(x_{5}),N(x_{4})N(x_{5}),h_1,h_2,h_3).
\]
\end{theorem}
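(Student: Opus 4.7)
The plan is to reduce the computation of $\mathbb{F}_q(V)^G$ to the already sketched computation of $\mathbb{F}_q(V)^{G_1}$, and then to extract the fixed field of the order-two action generated by $L$ via elementary Galois theory. Since $L$ normalises $G_1$ and $L^2 = 1$, the group $G = \langle G_1, L\rangle$ satisfies $G_1 \trianglelefteq G$ with $[G:G_1] = 2$, so by Galois theory
\[
\mathbb{F}_q(V)^G = \bigl(\mathbb{F}_q(V)^{G_1}\bigr)^{\langle L\rangle}.
\]
Thus the whole argument rests on two ingredients: (a) an explicit description of $\mathbb{F}_q(V)^{G_1}$, and (b) a careful description of how $L$ permutes its generators.

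For (a), I would repeat the strategy already used for $GU(8,q^2)$. First, $G_1$ acts on $\mathbb{F}_q[x_1,\ldots,x_5]$ as a subgroup of $U(5,q)$ whose order equals the product of the degrees of the orbit products $N(x_2),\ldots,N(x_5)$; this is enough to conclude that one may take $\phi_j = N(x_j)$ for $j\le 5$. Second, to handle $x_6,x_7,x_8$ I would apply the same reasoning with the abelian subgroup $H \le G_1$ obtained by putting $A = I_3$, $B = 0$, $F = I_2$ (so that only the lower-left block $J_3 S$ varies, now with the extra constraints $s_{ii} = b_{1i}b_{2i} = 0$ coming from the quadratic-form hypothesis), and to truncate $H$ into subgroups $L_1, L_2, L_3$ that fix $x_1,\ldots,x_{5+k-1}$; this gives a lower bound on the $x_{5+k}$-degree of any element of $R[5+k]^{G_1}$. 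The three invariants $h_1, h_2, h_3$ listed just before the theorem have, after suitable application of the endomorphisms $\psi_l$ from Proposition \ref{prop:psil}, the correct $x_{5+k}$-degrees to serve as $\phi_6, \phi_7, \phi_8$ in Theorem \ref{the:invfieldgen}. This yields
\[
\mathbb{F}_q(V)^{G_1} = \mathbb{F}_q(x_1, N(x_2), N(x_3), N(x_4), N(x_5), h_1, h_2, h_3).
\]

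For (b), I would compute the action of $L$ directly. Conjugation by $L$ swaps the fourth and fifth coordinate functions on $V^*$, and fixes all others. Consequently, $L$ fixes $x_1$, and because the orbit products $N(x_2)$ and $N(x_3)$ involve only $x_1, x_2, x_3$, they are left unchanged. The Steenrod-type invariants $h_1, h_2, h_3$ are written symmetrically in the pair $(x_4,x_5)$ (each term $x_5^{q^i}x_4 + x_5 x_4^{q^i}$ is manifestly symmetric under the swap), so $L$ fixes them as well. On the other hand, the $G_1$-orbits of $x_4$ and $x_5$ are interchanged by $L$, so $L: N(x_4) \leftrightarrow N(x_5)$. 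All told, the action of $\langle L\rangle$ on $\mathbb{F}_q(V)^{G_1}$ is the involution that swaps $N(x_4)$ and $N(x_5)$ and fixes the remaining seven generators.

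The fixed subfield of an order-two field automorphism of a purely transcendental extension that swaps two transcendentals and fixes the rest is generated by the two elementary symmetric polynomials in the swapped pair, together with the fixed generators. Hence
\[
\mathbb{F}_q(V)^G = \mathbb{F}_q\bigl(x_1, N(x_2), N(x_3), N(x_4)+N(x_5), N(x_4)N(x_5), h_1, h_2, h_3\bigr),
\]
as required. The main technical obstacle is neither step (a) nor the final symmetric-function argument, but rather the verification in step (b) that each $h_i$ is genuinely invariant under the swap $x_4 \leftrightarrow x_5$; this reduces to inspecting the explicit form of $\Omega_{k,1}$ and noting that only the single middle term $x_5^{q^k}x_4 + x_5 x_4^{q^k}$ involves both variables and is visibly symmetric.
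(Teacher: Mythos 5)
Your proposal is correct and follows essentially the same route as the paper: it first establishes $\mathbb{F}_q(V)^{G_1}=\mathbb{F}_q(x_1,N(x_2),\ldots,N(x_5),h_1,h_2,h_3)$ via Theorem \ref{the:invfieldgen}, using orbit products for $x_1,\ldots,x_5$ and degree bounds from the abelian subgroup with the characteristic-two constraint $s_{ii}=b_{1i}b_{2i}=0$ (the point of Remark \ref{re:H+-diagonalzero}), and then descends through the index-two extension $G_1\unlhd G=\langle G_1,L\rangle$, where $L$ fixes $x_1,N(x_2),N(x_3),h_1,h_2,h_3$ and swaps $N(x_4)$ with $N(x_5)$. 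The only cosmetic difference is that you compute the fixed field of the swap directly by an elementary symmetric-function/degree-two argument, whereas the paper passes through the invariant ring $R^{\langle L\rangle}$ of the polynomial ring $R$ on these generators; both are valid.
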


\section{Sylow $p$-subgroups}\label{sec:sylowdescription}
Let $\mathbb{F}$ be either the finite field  $\mathbb{F}_q$ or $\mathbb{F}_{q^2}$. Define  the matrix $\bar{A}:=[\bar{a}_{ij}]$ where $\bar{a}_{ij}=a_{ij}^q$.

\begin{notation}
Let $U(n,\mathbb{F})$ the group of $n\times n$ lower triangular matrices with entries in $\mathbb{F}$ and with ones along the diagonal. Also we shall write $M(n\times m,\mathbb{F})$ (or just $M(n,\mathbb{F})$, when $m=n$)  for the set of all $n\times m$ matrices whose entries belong to $\mathbb{F}$. When we want to make clear which field we are working with, we write $U(n,r)$ and $M(n\times m,r)$ (or $M(n,r)$) instead,  $r$ being the number of elements in $\mathbb{F}$.
\end{notation}

Let $\epsilon$ denote one of the two symbols ``$+$" or ``$-$" and
let $X_1\in GL(n,\mathbb{F})$ such that $X_1^2=I$ and let $X_2\in M(l,\mathbb{F})$ satisfying  $X_2=\epsilon\bar{X}_2^T$.
Consider the matrix
\begin{equation}\label{eq:matrixX}
X:=\left(
\begin{tabular}[h]{c|c|c}
$0$ & $0$& $X_1$ \\[-0.4 cm]
& \\
\hline
& \\[-0.4 cm]
$0$ &$X_2$ & $0$
\\[-0.4 cm]
& \\
\hline
& \\[-0.4 cm]
$\epsilon\bar{X}_1^T$ & $0$&$0$
\end{tabular}
\right)
\end{equation}
in $M(2n+l,\mathbb{F})$.
We define the  subgroups
$$\mathfrak G_{X_1,X_2}^{\epsilon}:=\{N\in U(2n+l,\mathbb{F})\ |\
N^TX\bar{N}=X\}.$$
 We write $N\in U(2n+l,\mathbb{F})$ as
$$\left(
\begin{tabular}[h]{c|c|c}
$A$ &$0$ & $0$ \\[-0.4 cm]
& \\
\hline
& \\[-0.4 cm]
$B$ &$F$ & $0$
\\[-0.4 cm]
& \\
\hline
& \\[-0.4 cm]
$C$ &$D$ &$E$
\end{tabular}
\right)
$$
where $A,E \in U(n,\mathbb{F})$, $F\in U(l,\mathbb{F})$, $C\in M(n,\mathbb{F})$, $B\in M(l\times n,\mathbb{F})$ and  $D\in M(n\times l,\mathbb{F})$. Then $N^TX\bar{N}=X$ if
\begin{equation}\label{syst:solution}
\left\{
\begin{array}{l}
D=-\bar{X}_1(\bar{A}^{-1})^T\bar{B}^T\bar{X}_2F\\
F^TX_2\bar{F}=X_2\\
E=\bar{X}_1(\bar{A}^{-1})^{T}\bar{X}_1\\
C=\bar{X}_1(\bar{A}^{-1})^T\bar{S}
\end{array}
\right.
\end{equation}
where $S+(\epsilon\bar{S}^T)=-B^TX_2\bar{B}$.
We shall denote the entries of $S$ and $B$ by $s_{ij}$ and $b_{ij}$, respectively.

\begin{lemma}\label{lem:solutions}
Let $N$ be an element of $U(2n+l,\mathbb{F})$. Then
$N\in \mathfrak G_{X_1,X_2}^{\epsilon}$ if and only if the system (\ref{syst:solution}) holds, with $S+(\epsilon\bar{S}^T)=-B^TX_2\bar{B}$.
\end{lemma}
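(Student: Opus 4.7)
The plan is to prove this by direct block matrix multiplication of $N^T X \bar{N}$ and equate the nine resulting blocks with the corresponding blocks of $X$. Using the block decompositions $N = \begin{pmatrix} A & 0 & 0 \\ B & F & 0 \\ C & D & E \end{pmatrix}$ and $X$ as given in (\ref{eq:matrixX}), one computes first $X\bar{N}$ and then $N^T X \bar{N}$, obtaining a $3\times 3$ block matrix whose entries are polynomial expressions in $A,B,C,D,E,F,X_1,X_2$ and their bars and transposes.

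The matching of blocks proceeds as follows. The $(3,3)$, $(3,2)$ and $(2,3)$ blocks collapse to $0 = 0$ because of the lower-triangular structure. The $(1,3)$ block yields $A^T X_1 \bar{E} = X_1$; inverting and using $X_1^2 = I$ (so $X_1^{-1} = X_1$) produces $E = \bar{X}_1 (\bar{A}^{-1})^T \bar{X}_1$, i.e.\ the third equation of (\ref{syst:solution}). The $(2,2)$ block produces directly $F^T X_2 \bar{F} = X_2$. The $(1,2)$ block gives $A^T X_1 \bar{D} + B^T X_2 \bar{F} = 0$, from which one solves $D = -\bar{X}_1(\bar{A}^{-1})^T \bar{B}^T \bar{X}_2 F$. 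Finally, the $(1,1)$ block yields the relation $A^T X_1 \bar{C} + B^T X_2 \bar{B} + \epsilon C^T \bar{X}_1^T \bar{A} = 0$; substituting the fourth equation $C = \bar{X}_1 (\bar{A}^{-1})^T \bar{S}$ and simplifying with $X_1^2 = I$ collapses the first and third summands to $S$ and $\epsilon \bar{S}^T$ respectively, giving the stated Hermitian-type condition $S + \epsilon \bar{S}^T = -B^T X_2 \bar{B}$.

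For the remaining blocks $(2,1)$ and $(3,1)$, the key observation is that they are the ``bar-transposes'' of the $(1,2)$ and $(1,3)$ blocks, and therefore hold automatically once those hold. To see this one uses the hypotheses $X_1^2 = I$ together with $X_2 = \epsilon \bar{X}_2^T$ (equivalently $\bar{X}_2^T = \epsilon X_2$, since $\epsilon^2 = 1$); then, for instance, bar-transposing the $(1,2)$ equation reproduces the $(2,1)$ equation up to the factor $\epsilon$. This redundancy argument is the only step that is not completely mechanical, so I expect it to be the main (mild) obstacle; the rest is bookkeeping of a block product. The converse direction is immediate: if system (\ref{syst:solution}) holds, substituting the four formulas back into the block expression for $N^T X \bar{N}$ recovers $X$ block by block via the same identities used above.
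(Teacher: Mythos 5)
Your proof is correct and follows essentially the same route as the paper, which treats Lemma \ref{lem:solutions} as a direct verification: expanding $N^TX\bar{N}$ in blocks, reading off the four equations of (\ref{syst:solution}) from the $(1,1)$, $(1,2)$, $(1,3)$ and $(2,2)$ blocks, and noting that the $(2,1)$ and $(3,1)$ blocks are redundant bar-transposes (using $X_1^2=I$ and $X_2=\epsilon\bar{X}_2^T$). Your explicit handling of that redundancy, and of solving the $(1,1)$ block for $S$, fills in exactly the bookkeeping the paper leaves implicit.
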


We can now describe the Sylow $p$-groups of the classical groups.
But first we define the matrix
\begin{equation}\label{eq:Jm}
J_n:=\left(
\begin{array}{cccc}
0& \cdots & 0 & 1\\
\vdots& \iddots & \iddots& 0\\
0 & 1& \iddots & \vdots \\
1& 0 & \cdots & 0
\end{array}
\right).
\end{equation}
In Section \ref{sec:proofsylows} we will give proofs for the following Lemmas.

\begin{lemma}\label{le:sylowGUandSp}
The following holds:
\begin{enumerate}
    \item The group $\mathfrak G_{J_{m-1},J_2}^{+}$ is a Sylow $p$-subgroup for $GU(2m,q^2)$.

     \item Let $G$ consist of the elements of $\mathfrak G_{J_{m},1}^{+}$
with $F$ the $1\times 1$ identity matrix. Then $G$ is a Sylow $p$-subgroup for $GU(2m+1,q^2)$.

    \item The group $\mathfrak G_{J_{m-1},X_2}^{-}$ is a Sylow $p$-subgroup for $Sp(2m,q)$ with
$X_{2}=\left(
\begin{tabular}[h]{cc}
$0$ & $1$\\
$-1$ &$0$
\end{tabular}
\right)$.
\end{enumerate}
\end{lemma}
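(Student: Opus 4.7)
All elements of $\mathfrak{G}_{X_1,X_2}^\epsilon$ lie in $U(2n+l,\mathbb{F})$ and are therefore unipotent, so they have $p$-power order; $\mathfrak{G}$ is automatically a $p$-group. The strategy is to verify (a) that $\mathfrak{G}$ is contained in the named classical group, and (b) that $|\mathfrak{G}|$ equals the $p$-part of the order of that group. Sylow's theorem then identifies $\mathfrak{G}$ as a Sylow $p$-subgroup.

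For (a), I observe that the block matrix $X$ in (\ref{eq:matrixX}) is, by construction, the Gram matrix of a non-degenerate form of the required type: when $\epsilon=+$, the relation $X_2=\bar{X}_2^T$ together with the block structure gives $\bar{X}^T=X$, so $X$ is Hermitian; when $\epsilon=-$ and $\mathbb{F}=\mathbb{F}_q$, the relation $X_2=-X_2^T$ gives $X^T=-X$, so $X$ is alternating. Non-degeneracy follows from the invertibility of $X_1$ and $X_2$. Since any two non-degenerate Hermitian (resp.\ alternating) forms of the same rank are equivalent, the isometry group $\{N:N^T X\bar{N}=X\}$ is conjugate to the standard $GU$ (resp.\ $Sp$), and $\mathfrak{G}_{X_1,X_2}^\epsilon$ is by definition its intersection with $U(2n+l,\mathbb{F})$.

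For (b), I would use Lemma \ref{lem:solutions} to parametrize each $N\in\mathfrak{G}$ by a tuple $(A,B,F,S)$. The components $A\in U(n,\mathbb{F})$ and $B\in M(l\times n,\mathbb{F})$ contribute $|\mathbb{F}|^{n(n-1)/2}$ and $|\mathbb{F}|^{ln}$ respectively. The parameter $F$ is then analyzed case by case: for $GU(2m,q^2)$, the equation $F^T J_2\bar{F}=J_2$ reduces to $c+\bar{c}=0$ on the strict lower entry $c$ of $F$, giving $q$ choices; for $GU(2m+1,q^2)$ the matrix $F$ is the fixed $1\times 1$ identity; for $Sp(2m,q)$, a direct computation shows $F^T X_2 F=X_2$ holds automatically for any $c\in\mathbb{F}_q$, again giving $q$ choices. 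The count of admissible $S$ follows from $S+\epsilon\bar{S}^T=-B^T X_2\bar{B}$ by splitting into off-diagonal pairs $\{(i,j),(j,i)\}$, for each of which $s_{ij}$ may be chosen freely with $s_{ji}$ then determined, and into diagonal entries $s_{ii}$. In the Hermitian case, the right-hand side $-(B^T J_2\bar{B})_{ii}$ is bar-invariant and hence lies in $\mathbb{F}_q$, so the trace equation $s_{ii}+\bar{s}_{ii}=-(B^T J_2\bar{B})_{ii}$ has exactly $q$ solutions; in the alternating case $(B^T X_2 B)_{ii}=0$ because $X_2=-X_2^T$, so the diagonal of $S$ is unconstrained.

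Elementary exponent arithmetic then produces $|\mathfrak{G}|=q^{m(2m-1)}$, $q^{m(2m+1)}$, $q^{m^2}$ in the three cases respectively, matching the $p$-parts of $|GU(2m,q^2)|$, $|GU(2m+1,q^2)|$, $|Sp(2m,q)|$. The main obstacle is the Hermitian-case diagonal count: one must first verify that $B^T J_2\bar{B}$ is Hermitian so that its diagonal lies in $\mathbb{F}_q$, and only then does the trace-fibre argument give $q$ choices per diagonal entry of $S$; once this is in place, the remaining bookkeeping is mechanical.
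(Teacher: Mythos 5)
Your proposal is correct and follows essentially the same route as the paper: identify $\mathfrak G^{\epsilon}_{X_1,X_2}$ as the unitriangular stabilizer of a Gram matrix representing the (unique up to equivalence) Hermitian resp.\ alternating form, count its elements through the parametrization $(A,B,F,S)$ of Lemma \ref{lem:solutions}, and match the result with the $p$-part of the known group orders. The only difference is presentational: the paper delegates the count of admissible $S$ and the resulting order formula to Lemmas \ref{lem:numberS} and \ref{lem:ordersG+-} (under hypothesis (H)), whereas you carry out the same diagonal/off-diagonal and trace-fibre bookkeeping inline.
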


We consider separately the orthogonal groups in odd and in even characteristic.

\begin{lemma}\label{le:sylowOrthogonaloddq}
Assume that $q$ is odd. Then:
\begin{enumerate}
    \item The group $\mathfrak G_{J_{m},2}^{+}$ is a Sylow $p$-subgroup for $O(2m+1,q)$.

    \item The group $\mathfrak G_{J_{m-1},J_2}^{+}$ is a Sylow $p$-subgroup for $O^+(2m,q)$.

    \item The group $\mathfrak G_{J_{m},X_2}^{+}$ is a Sylow $p$-subgroup for $O^-(2m+2,q)$ with
$X_{2}=\left(
\begin{tabular}[h]{cc}
$a$ & $1$\\
$1$ &$2a$
\end{tabular}
\right)$. Here $a$ is such that $X^2+X+a$ is irreducible in $\mathbb{F}_q[X]$.
\end{enumerate}
\end{lemma}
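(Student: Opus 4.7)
The strategy is uniform across the three parts: since $\mathfrak{G}^{+}_{X_1,X_2}$ sits inside the lower-unitriangular group $U(2n+l,q)$ it is automatically a $p$-group, so it suffices to verify (i) that $\mathfrak{G}^{+}_{X_1,X_2}$ is contained in the target orthogonal group, and (ii) that $|\mathfrak{G}^{+}_{X_1,X_2}|$ equals the $p$-part of the order of that group. Over $\mathbb{F}_q$ with $q$ odd, conjugation is trivial and the defining identity $N^{T}X\bar{N}=X$ of Lemma~\ref{lem:solutions} reduces to $N^{T}XN=X$, i.e.\ $N$ is an isometry of the symmetric bilinear form with Gram matrix $X$.

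For (i) I would identify the isometry type of the form whose Gram matrix is $X$. In all three cases $X_1=J_n$ is symmetric and invertible, so the two outer $n\times n$ blocks of $X$ assemble into a hyperbolic subspace of Witt index $n$, and the type of the ambient orthogonal group is determined by the $l\times l$ middle block $X_2$:
\begin{itemize}
\item[(1)] $l=1$, $X_2=2$ is a nondegenerate $1$-dimensional form, giving total dimension $2m+1$ and hence $O(2m+1,q)$;
\item[(2)] $l=2$, $X_2=J_2$ is a second hyperbolic plane, yielding Witt index $m$ in dimension $2m$, i.e.\ $O^{+}(2m,q)$;
\item[(3)] $l=2$, $X_2=\bigl(\begin{smallmatrix}a&1\\1&2a\end{smallmatrix}\bigr)$. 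I would show that the associated binary quadratic form $Q(y_1,y_2)=\tfrac{1}{2}(ay_1^{2}+2y_1y_2+2ay_2^{2})$ is anisotropic: after dehomogenising $t=y_1/y_2$ the equation $Q=0$ becomes (up to a nonzero scalar) $t^{2}+t+a=0$, which has no root in $\mathbb{F}_q$ exactly by the hypothesis that $X^{2}+X+a$ is irreducible. This gives Witt index $m$ in dimension $2m+2$, hence $O^{-}(2m+2,q)$.
\end{itemize}

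For (ii) I would apply Lemma~\ref{lem:solutions}: an element of $\mathfrak{G}^{+}_{X_1,X_2}$ is uniquely determined by the free data $(A,B,F,S)$ with $A\in U(n,q)$, $B\in M(l\times n,q)$, $F\in U(l,q)$ satisfying $F^{T}X_2F=X_2$, and $S$ an $n\times n$ matrix with $S+S^{T}=-B^{T}X_2B$. Counting: $A$ contributes $q^{n(n-1)/2}$; $B$ contributes $q^{ln}$; the equation for $S$ pins down $s_{ii}$ uniquely via $2s_{ii}=-(B^{T}X_2B)_{ii}$ (using $p$ odd) and determines $s_{ji}$ from $s_{ij}$ for $i<j$, leaving the strictly upper triangular entries free and so contributing $q^{n(n-1)/2}$; finally a short computation shows that in each case the only solution $F\in U(l,q)$ of $F^{T}X_2F=X_2$ is $F=I$. (In case (1) this is trivial; in case (2), writing $F=\bigl(\begin{smallmatrix}1&0\\f&1\end{smallmatrix}\bigr)$ gives $F^{T}J_2F=\bigl(\begin{smallmatrix}2f&1\\1&0\end{smallmatrix}\bigr)$, forcing $f=0$; in case (3) the $(1,2)$-entry of $F^{T}X_2F$ is $1+2af$, forcing $f=0$ since $a\ne 0$.) Multiplying yields orders $q^{m^{2}}$, $q^{m(m-1)}$ and $q^{m(m+1)}$ in cases (1), (2), (3) respectively, which match the $p$-parts of $|O(2m+1,q)|$, $|O^{+}(2m,q)|$ and $|O^{-}(2m+2,q)|$ read off from the standard order formulas.

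The main obstacle I expect is part of (i), namely the identification of the $X_2$-block in case (3) with an anisotropic binary form, so that the resulting orthogonal group is of minus type and not plus type. This is the place where the slightly mysterious-looking choice of $X_2=\bigl(\begin{smallmatrix}a&1\\1&2a\end{smallmatrix}\bigr)$ with $a$ coming from an irreducible $X^{2}+X+a$ has to be unpacked; everything else (parameter counting and the trivial $F$) is mechanical, and the odd-characteristic hypothesis enters only through invertibility of $2$, which is needed both for the diagonal of $S$ and for killing $f$ in the analysis of $F$.
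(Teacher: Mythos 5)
Your overall route is the same as the paper's: in odd characteristic one passes to the symmetric bilinear form attached to the quadratic form, observes that $\mathfrak G^{+}_{X_1,X_2}$ is by definition the unitriangular stabilizer of that form, and then matches $|\mathfrak G^{+}_{X_1,X_2}|$ (your count of $A$, $B$, $S$, $F$ is exactly the content of Lemmas \ref{lem:numberS} and \ref{lem:ordersG+-} with $s=1$) against the $p$-parts of the order formulas (\ref{orderO+})--(\ref{orderO2m+1}). That part of your write-up is correct and essentially identical to the paper's argument.

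The gap is precisely in the step you flagged as the main obstacle: the anisotropy of the middle block in case (3). For the Gram matrix $X_2=\bigl(\begin{smallmatrix} a & 1\\ 1& 2a\end{smallmatrix}\bigr)$ that you (following the statement) use, the associated binary quadratic form is $\tfrac{1}{2}(ay_1^{2}+2y_1y_2+2ay_2^{2})$, and dehomogenising $Q=0$ via $t=y_1/y_2$ gives $at^{2}+2t+2a=0$, which is \emph{not} a nonzero scalar multiple of $t^{2}+t+a$. Its discriminant is $4(1-2a^{2})$, so anisotropy is equivalent to $1-2a^{2}$ being a nonsquare, and this does not follow from irreducibility of $X^{2}+X+a$ (i.e.\ from $1-4a$ being a nonsquare): for $q=5$, $a=1$ the polynomial $X^{2}+X+1$ is irreducible, yet $1-2a^{2}=-1$ is a square and indeed $(y_1,y_2)=(1,1)$ is isotropic, so the middle block is a hyperbolic plane and the ambient group is of plus type; the order count cannot rescue this, because the $p$-parts of $|O^{+}(2m+2,q)|$ and $|O^{-}(2m+2,q)|$ coincide. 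The source of the trouble is that the matrix displayed in the statement is a misprint: the paper's own proof takes $X_2=\bigl(\begin{smallmatrix}2&1\\1&2a\end{smallmatrix}\bigr)$, the polarization of the form $\alpha_{m+1}^{2}+\alpha_{m+1}\alpha_{m+2}+a\alpha_{m+2}^{2}$ of Proposition \ref{prop:allO(V)}, for which the middle quadratic form is $y_1^{2}+y_1y_2+ay_2^{2}$ and your reduction to $t^{2}+t+a$ is exactly right. So either replace $X_2$ by that polar Gram matrix (as the paper does) or your anisotropy verification, as written, does not go through.
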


Consider the matrices
\begin{equation}\label{matrixLL1}
\begin{tabular}[h]{ccc}
$L:=\left(
\begin{tabular}[h]{c|c|c}
$I_{m-1}$ & $0$& $0$ \\[-0.4 cm]
& \\
\hline
& \\[-0.4 cm]
$0$ &$J_2$ & $0$
\\[-0.4 cm]
& \\
\hline
& \\[-0.4 cm]
$0$ & $0$&$I_{m-1}$
\end{tabular}
\right)$ &\qquad \qquad& $L_1:=\left(
\begin{tabular}[h]{c|c|c}
$I_{m}$ & $0$& $0$ \\[-0.4 cm]
& \\
\hline
& \\[-0.4 cm]
$0$ &$J_2^\prime$ & $0$
\\[-0.4 cm]
& \\
\hline
& \\[-0.4 cm]
$0$ &$0$&$I_{m}$
\end{tabular}
\right)$
\end{tabular}
\end{equation}
where $I$ denotes the identity matrix and
$J_2^\prime:=\left(
\begin{tabular}[h]{cc}
$1$ & $1$ \\
$0$ &$1$
\end{tabular}
\right)
$.

\begin{lemma}\label{le:sylowOrthogonalevenq}
Take $q$ even.
\begin{enumerate}
\item  Let $G$ consist of the elements of $\mathfrak G_{J_{m},0}^{+}$
with $F$ the $1\times 1$ identity matrix and $s_{ii}=b_{1i}^2$ for $i=1,\ldots,m$ (in notation of
Section \ref{sec:sylowdescription}). Then $G$ is a Sylow $p$-subgroup for $O(2m+1,q)$.
\item Let $G_1$ be the group formed by the elements of $\mathfrak G_{J_{m-1},J_2}^{+}$ that satisfy $s_{ii}=b_{1i}b_{2i}$ for $i=1,\ldots,m-1$. Then the group generated by $G_1$ and $L$ is a Sylow $p$-subgroup for $O^+(2m,q)$.
\item Let $G_1$ be the group formed by the elements of $\mathfrak G_{J_{m},J_2}^{+}$ that satisfy $s_{ii}=b_{1i}^2+b_{1i}b_{2i}+b_{2i}^2$ for $i=1,\ldots,m$. Then the group generated by $G_1$ and $L_1$ is a Sylow $p$-subgroup for $O^-(2m+2,q)$.
\end{enumerate}
\end{lemma}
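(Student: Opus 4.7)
The plan is to reduce each of the three parts to Lemma \ref{lem:solutions} and then impose on its parametrization the additional conditions that, in characteristic $2$, characterize the orthogonal group inside the stabilizer of its polarization. For this I would first identify the quadratic form $Q$ to work with: in part $(1)$ take $Q(v)=\sum_{i=1}^{m}v_{i}v_{2m+2-i}+v_{m+1}^{2}$; in part $(2)$ the split form $Q(v)=\sum_{i=1}^{m}v_{i}v_{2m+1-i}$; and in part $(3)$ the non-split form $Q(v)=\sum_{i=1}^{m}v_{i}v_{2m+3-i}+Q_{2}(v_{m+1},v_{m+2})$, with $Q_{2}$ the essentially unique anisotropic $2$-dimensional quadratic form whose polarization matrix is $J_{2}$. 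In each case the polarization of $Q$ coincides with the alternating form attached to the matrix $X$ of \eqref{eq:matrixX}, so Lemma \ref{lem:solutions} already forces every $N\in\mathfrak G_{X_{1},X_{2}}^{+}$ to preserve the polarization; the remaining requirement $Q(Nv)=Q(v)$ is then equivalent in characteristic $2$ to the $n$ scalar identities $Q(Ne_{i})=Q(e_{i})$ ranging over the standard basis, since the polarization takes care of the cross terms.

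The core computational step is to unpack these identities. Using the formula $C=J_{n}(A^{-1})^{T}S$ from \eqref{syst:solution} one shows that the contribution of the $C$-block to $Q(Ne_{i})$ is exactly $s_{ii}$, while expanding $Q_{2}(Be_{i})$ with the Frobenius identity $(x+y)^{2}=x^{2}+y^{2}$ yields $b_{1i}^{2}$, $b_{1i}b_{2i}$ or $b_{1i}^{2}+b_{1i}b_{2i}+b_{2i}^{2}$ in parts $(1)$, $(2)$, $(3)$ respectively. Setting $Q(Ne_{i})=Q(e_{i})$ for $i$ in the first block then produces exactly the stated relations $s_{ii}=b_{1i}^{2}$, $s_{ii}=b_{1i}b_{2i}$ and $s_{ii}=b_{1i}^{2}+b_{1i}b_{2i}+b_{2i}^{2}$. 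Basis vectors in the third block give no new condition, while those in the middle block translate into the requirement that the block $F$ preserve the restricted quadratic form $Q_{2}$; a direct expansion shows that this forces $F=I$ in part $(2)$, and that $F=I$ is also the correct convention to adopt for $G_{1}$ in part $(3)$, so that the extra order-$2$ element $L_{1}$ is genuinely outside $G_{1}$.

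With these conditions in place, counting free parameters in $A$, $B$, $F$, $S$ yields $|G|=q^{m^{2}}$ in part $(1)$ and $|G_{1}|=q^{m(m-1)}$, $q^{m(m+1)}$ in parts $(2)$, $(3)$, matching respectively the $p$-parts of $|O(2m+1,q)|$, $\tfrac{1}{2}|O^{+}(2m,q)|$ and $\tfrac{1}{2}|O^{-}(2m+2,q)|$. To complete parts $(2)$ and $(3)$ I would then check by direct matrix computation that $L$ (respectively $L_{1}$) preserves $Q$, squares to the identity, lies outside $G_{1}$ (since its middle block is upper-unitriangular, not lower), and normalizes $G_{1}$: conjugation by $L$ swaps the two middle rows and columns, sending $(b_{1i},b_{2i})\mapsto(b_{2i},b_{1i})$ and leaving the symmetric relation $s_{ii}=b_{1i}b_{2i}$ unchanged, while conjugation by $L_{1}$ replaces them by $(b_{1i}+b_{2i},b_{2i})$, under which $(b_{1i}+b_{2i})^{2}+(b_{1i}+b_{2i})b_{2i}+b_{2i}^{2}=b_{1i}^{2}+b_{1i}b_{2i}+b_{2i}^{2}$ in characteristic $2$. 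Hence $\langle G_{1},L\rangle$ and $\langle G_{1},L_{1}\rangle$ have order $2|G_{1}|$, which is the full $p$-part of the ambient orthogonal group, so they are Sylow $p$-subgroups.

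The main technical obstacle is the careful expansion of $Q(Ne_{i})$ in terms of the block entries of $N$: it is a sum over pairs $(j,k)$ matching the chosen pairing, and extracting the single intended relation $s_{ii}=\cdots$ requires methodical use of the relation $S+S^{T}=B^{T}X_{2}B$ supplied by Lemma \ref{lem:solutions} together with the Frobenius identity, exploiting the fact that $S$ enters the quadratic identity only through its diagonal $s_{ii}$ (the off-diagonal symmetric part being absorbed by the polarization relation). A secondary subtlety is ensuring that the middle-block condition is implemented consistently in part $(3)$, so that $L_{1}$ really realizes the missing order-$2$ factor in the Sylow rather than duplicating an element already present in $G_{1}$; this is confirmed by the explicit normalization computation in the preceding paragraph.
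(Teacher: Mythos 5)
Your proposal is correct and follows essentially the same route as the paper: cut the quadratic-form condition out of $\mathfrak G^{+}_{X_1,X_2}$ via the relation $C=X_1(A^{-1})^TS$ to get the diagonal constraints $s_{ii}=Q_2(Be_i)$, count parameters against the orders (\ref{orderO+})--(\ref{orderO2m+1}), and adjoin the normalizing involutions $L$, $L_1$ in the two even-type cases. The only difference is cosmetic: you reduce $Q(Nv)=Q(v)$ to the basis-vector identities using additivity of $v\mapsto Q(Nv)-Q(v)$ once the polarization is preserved, whereas the paper expands $Q(Nv)$ in full via Lemmas \ref{le:decompSqeven} and \ref{lemaux:Oeven} — a streamlining of the same computation (and your explicit treatment of the middle block $F$, including the convention $F=I$ in case (3), matches what the paper's proof uses implicitly).
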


\section{Invariant Fields}\label{sec:invfields}
Let $\mathbb{F}$ be a field, $V=\mathbb{F}^n$ and let
$(x_1,x_2,\ldots,x_n)$ be the dual basis of the ordered standard basis
$(e_1,\cdots,e_n)$ of $V$. We will write elements of $V$ as columns vectors of
the form $v:=(\alpha_1,\cdots,\alpha_n)^T$.
\\[1mm]
If $\mathbb{F}=\mathbb{F}_q$,  the generators of $\mathbb{F}_q[V]^{GL(V)}$ can be
defined as the coefficients of the polynomial
\begin{equation}\label{eq:Fn(X)}
F_{n,q}(X):=\prod_{u\in V^*}(X-u)=X^{q^n}+\sum_{i=0}^{n-1}(-1)^{n-i}c_iX^{q^i} \in \mathbb{K}[X],
\end{equation}
where $\mathbb{K}$ is a field containing $\mathbb{F}(V)$.

\begin{lemma}\label{lem:Fn(X)ind}
Let $U:=\langle x_1,\cdots,x_{n-1}\rangle_{\mathbb{F}_q}$. Then we have
\[
F_{n,q}(X)=F_{n-1,q}(X)^q-F_{n-1,q}(x_n)^{q-1}F_{n-1,q}(X)
\]
where $F_{n-1,q}(X)=\prod_{u\in U}(X-u)$.
\end{lemma}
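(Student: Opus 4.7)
The plan is to express $F_{n,q}(X)$ as a product over the cosets of $U$ in $V^*$ and exploit $\mathbb{F}_q$-linearity of the polynomial $F_{n-1,q}$.

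First I would decompose $V^* = U \oplus \mathbb{F}_q x_n$, so every element of $V^*$ can be uniquely written as $u + \alpha x_n$ with $u \in U$ and $\alpha \in \mathbb{F}_q$. This partitions the product into cosets and yields
\begin{equation*}
F_{n,q}(X) = \prod_{\alpha \in \mathbb{F}_q}\prod_{u\in U}\bigl(X - (u+\alpha x_n)\bigr) = \prod_{\alpha \in \mathbb{F}_q} F_{n-1,q}(X - \alpha x_n).
\end{equation*}

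Next I would invoke the key fact that, for any $\mathbb{F}_q$-subspace $W$ of a commutative $\mathbb{F}_q$-algebra $R$, the polynomial $\prod_{w\in W}(X-w)\in R[X]$ is a $q$-polynomial, hence $\mathbb{F}_q$-linear in $X$. One standard way to see this is by induction on $\dim W$: if $W = W' \oplus \mathbb{F}_q w_0$, then $\prod_{w\in W}(X-w) = \prod_{\alpha}\bigl(f(X) - \alpha f(w_0)\bigr) = f(X)^q - f(w_0)^{q-1}f(X)$, where $f(Y) = \prod_{w\in W'}(Y-w)$ is $\mathbb{F}_q$-linear by induction (note this is essentially the identity we want to prove, so I would state and verify $\mathbb{F}_q$-linearity once, then quote the same algebraic manipulation). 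Applied here, the $\mathbb{F}_q$-linearity of $F_{n-1,q}$ gives $F_{n-1,q}(X-\alpha x_n) = F_{n-1,q}(X) - \alpha F_{n-1,q}(x_n)$.

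Setting $A := F_{n-1,q}(X)$ and $B := F_{n-1,q}(x_n)$ in $R[X]$ where $R = \mathbb{F}_q[V]$, I would then compute
\begin{equation*}
F_{n,q}(X) = \prod_{\alpha\in\mathbb{F}_q}(A - \alpha B) = A^q - A B^{q-1},
\end{equation*}
using the polynomial identity $\prod_{\alpha\in\mathbb{F}_q}(Y-\alpha) = Y^q - Y$ applied to the homogenization (since this is a polynomial identity in $A,B$, no assumption $B \ne 0$ is needed; one can either extend scalars to a fraction field or verify directly by comparing the two sides as polynomials in $A$ of degree $q$ with the same $q$ roots $\alpha B$). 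Substituting back gives exactly $F_{n,q}(X) = F_{n-1,q}(X)^q - F_{n-1,q}(x_n)^{q-1} F_{n-1,q}(X)$.

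The only non-routine ingredient is the $\mathbb{F}_q$-linearity of $F_{n-1,q}$, but this is classical and self-contained; once that is in place, the proof is a one-line coset decomposition followed by the elementary identity $\prod_{\alpha\in\mathbb{F}_q}(Y-\alpha) = Y^q - Y$. I do not anticipate any serious obstacle.
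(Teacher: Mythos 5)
Your proposal is correct and follows essentially the same route as the paper: decompose the product over $V^*$ into cosets of $U$, use the $\mathbb{F}_q$-linearity of $F_{n-1,q}$ to write $F_{n-1,q}(X-\alpha x_n)=F_{n-1,q}(X)-\alpha F_{n-1,q}(x_n)$, and finish with the identity $\prod_{\alpha\in\mathbb{F}_q}(A-\alpha B)=A^q-B^{q-1}A$. Your extra care about establishing linearity non-circularly and about not needing $B\neq 0$ is a welcome refinement of the same argument.
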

\begin{proof}
First, we note that the polynomial $F_{n,q}(X)$ is $\mathbb{F}_q$-linear. Hence
\begin{eqnarray}
\nonumber F_{n,q}(X)&=&\prod_{f\in V^*}(X-f)=\prod_{a\in \mathbb{F}_q}\prod_{g\in U}(X-ax_n-g)\\
\nonumber  &=&\prod_{a\in \mathbb{F}_q}F_{n-1,q}(X-ax_n)=\prod_{a\in \mathbb{F}_q}(F_{n-1,q}(X)-aF_{n-1,q}(x_n))\\
\nonumber  &=&F_{n-1,q}(X)^q-F_{n-1,q}(x_n)^{q-1}F_{n-1,q}(X).
\end{eqnarray}
This finishes the proof.
\end{proof}
Obviously we have $F_{0,q}(X)=X$. Let $\mathbb{F}$ be either the finite field  $\mathbb{F}_q$ or $\mathbb{F}_{q^2}$ and denote by $r$ the number of elements of $\mathbb{F}$.
We define a sequence of endomorphisms $\psi_l$ of $\mathbb{F}$-algebras from
$A:=\mathbb{F}[x_1,\ldots,x_n]$ to itself by $\psi_l:\ A \longrightarrow A$,
$x_i\mapsto F_{l,r}(x_i).$
Note that $\psi_0$ is the identity map on $A$, $\psi_1(x_1)=0$ and $\psi_1(x_2)=x_2^r-x_1^{r-1}x_2$ is the orbit product of $x_2$ under the action of $U(n,\mathbb{F})$.

\begin{proposition}\label{prop:psil}
For every endomorphism $\psi_l$ the following hold:
\begin{enumerate}
    \item $\psi_l(x_k)=0$ for all $1\leq k\leq l$;
    \item $\psi_l(x_{l+1})$ is the orbit product of $x_{l+1}$ under the action of  $U(n,\mathbb{F})$,
    hence an invariant for that group. 
    \item $\psi_l(f)=(\psi_{l-1}(f))^r-\psi_{l-1}(x_l)^{r-1}\psi_{l-1}(f)$ for every homogeneous polynomial $f$ in degree $1$, i.e, $\mathbb{F}$-linear combinations of the $x_i$'s;
    \item for every $g\in U(n,\mathbb{F})$ we have $g\circ\psi_l=\psi_l\circ g$.
\end{enumerate}
\end{proposition}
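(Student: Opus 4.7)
The plan is to reduce all four assertions to the single observation that the polynomial $F_{l,r}(X)$ is an $\mathbb{F}_r$-linearized (hence $\mathbb{F}_r$-linear and additive) polynomial. This is immediate from $F_{l,r}(X)=\prod_{u\in W}(X-u)$, where $W:=\langle x_1,\ldots,x_l\rangle_{\mathbb{F}_r}$ is an $\mathbb{F}_r$-subspace: indeed, the proof of Lemma \ref{lem:Fn(X)ind} already uses the classical fact that such products over an additive subgroup yield an additive polynomial, and $\mathbb{F}_r$-homogeneity follows analogously. Since $\mathbb{F}=\mathbb{F}_r$ in this paper, for any degree-$1$ form $f=\sum a_i x_i\in A$ one has $\psi_l(f)=\sum a_i F_{l,r}(x_i)=F_{l,r}(f)$; this identification between $\psi_l$ on linear forms and $F_{l,r}$ evaluated on linear forms is the engine driving everything.

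With this setup, (1) is immediate: for $k\le l$, $x_k\in W$, so $(X-x_k)$ is one of the factors of $F_{l,r}(X)$, and thus $\psi_l(x_k)=F_{l,r}(x_k)=0$. For (3) I would evaluate Lemma \ref{lem:Fn(X)ind} at the linear form $f$ (legitimate since that lemma is a polynomial identity in $X$) and rewrite each term via $F_{l-1,r}(f)=\psi_{l-1}(f)$ and $F_{l-1,r}(x_l)=\psi_{l-1}(x_l)$, giving exactly the claimed recursion.

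For (2), I would first record the standard dual calculation: if $g\in U(n,\mathbb{F})$ is lower unitriangular, then $g\cdot x_i\in x_i+\langle x_1,\ldots,x_{i-1}\rangle_{\mathbb{F}}$ for all $i$, and moreover every element of $x_{l+1}+W$ is attained as $g\cdot x_{l+1}$ for some $g$ (use elementary lower unitriangular transvections $I+\alpha E_{l+1,k}$, $k\le l$). Hence the $U(n,\mathbb{F})$-orbit of $x_{l+1}$ is exactly $x_{l+1}+W$, and its orbit product equals $\prod_{w\in W}(x_{l+1}+w)=\prod_{w\in W}(x_{l+1}-w)=F_{l,r}(x_{l+1})=\psi_l(x_{l+1})$.

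Finally, (4) reduces, by the algebra-endomorphism property of both $\psi_l$ and $g$, to checking $g(\psi_l(x_i))=\psi_l(g\cdot x_i)$ for each $i$. The key point is that $g$ preserves $W$ setwise, because $g\cdot x_k\in\langle x_1,\ldots,x_k\rangle\subseteq W$ for $k\le l$; thus $g$ permutes the linear factors of $F_{l,r}$, and
\[
g\bigl(F_{l,r}(x_i)\bigr)=\prod_{u\in W}(g\cdot x_i-g\cdot u)=\prod_{v\in W}(g\cdot x_i-v)=F_{l,r}(g\cdot x_i).
\]
For $i>l$ this is $\psi_l(g\cdot x_i)$ by the identification above; for $i\le l$ both sides vanish by (1). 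The only mildly delicate point — and the step I would write most carefully — is the additivity of $F_{l,r}$ on linear forms with $\mathbb{F}_r$-coefficients, since it is used implicitly in all of (2), (3) and (4); everything else is bookkeeping.
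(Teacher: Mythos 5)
Your proof is correct, but it takes a genuinely different route from the paper's, most visibly in parts (1) and (4). The paper proves (1) by induction on $l$ through the recursion of part (3), and proves (4) by induction on $l$ as well: it applies $g$ to the recursion $\psi_{l+1}(x_i)=\psi_l(x_i)^r-\psi_l(x_{l+1})^{r-1}\psi_l(x_i)$, invokes the induction hypothesis, and then uses the invariance of the orbit product from (2) to replace $\psi_l(g(x_{l+1}))$ by $\psi_l(x_{l+1})$; so in the paper (4) genuinely depends on (2) and (3). You instead work directly with the product formula $F_{l,r}(X)=\prod_{u\in W}(X-u)$, $W=\langle x_1,\ldots,x_l\rangle_{\mathbb{F}}$: (1) is immediate since $x_k$ is one of the roots, and (4) follows without induction because a lower-unitriangular $g$ maps $W$ into (hence onto) itself and therefore permutes the linear factors, giving $g(F_{l,r}(x_i))=F_{l,r}(g\cdot x_i)=\psi_l(g\cdot x_i)$. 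Your (3) is essentially the paper's argument (evaluate the recursion of Lemma \ref{lem:Fn(X)ind}, at level $l$, on a linear form and use linearity), and your (2) is actually more detailed than the paper's, which simply cites the definition together with Lemma \ref{lem:Fn(X)ind} and the orbit description of $x_{l+1}$ under lower unitriangular matrices stated in the introduction. What the two approaches buy: yours is conceptually cleaner, decouples (4) from (2)--(3), and isolates the one real input, namely that $F_{l,r}$ is an $\mathbb{F}_r$-linearized polynomial so that $\psi_l(f)=F_{l,r}(f)$ on linear forms (legitimate here since $a^r=a$ for $a\in\mathbb{F}=\mathbb{F}_r$); this additivity is exactly the fact the paper itself asserts at the start of the proof of Lemma \ref{lem:Fn(X)ind}, so you are not assuming more than the paper does, though as you note it is the step to write out carefully. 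The paper's inductive treatment, by contrast, never needs the coset/product description beyond part (2) and runs entirely on the formal recursion plus $\mathbb{F}$-linearity of Frobenius and of multiplication by a fixed invariant.
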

\begin{proof}
(1).:\ We prove this by induction on $l$. For $l=1$ we have seen that $\psi_1(x_1)=0$. Now we assume that the statement is true for $l$ and let $k\leq l+1$. Then
$\psi_{l+1}(x_k)=\psi_l(x_k)^r-\psi_l(x_{l+1})^{r-1}\psi_l(x_k),$
which is zero for $k\leq l$ by the induction hypothesis. For $k=l+1$ we get $\psi_{l+1}(x_{l+1})=0$ immediately. (2).:\ By definition $\psi_l(x_{l+1})=F_{l,r}(x_{l+1})$ and the statement from Lemma \ref{lem:Fn(X)ind}.
(3).:\ Note that the endomorphisms $\psi_l$ as well as multiplication by the fixed element $\psi_{l-1}(x_l)^{r-1}$ and $(\ )^r$ are $\mathbb{F}$-linear operators. Since the formula is true for each $x_i$ by definition, the result follows.
(4).:\ Here it suffices to show that $(g\circ\psi_l)(x_i)=(\psi_l\circ g)(x_i)$ for all $i=1,2,\ldots,n$. Again, we use induction on $l$. For $l=0$ the result follows immediately since $\psi_0$ is the identity map.
We assume that the result holds for $l$. Then
\begin{eqnarray}
\nonumber (g\circ\psi_{l+1})(x_i)&=&g(\psi_{l+1}(x_i))=g(\psi_{l}(x_i)^r-\psi_{l}(x_{l+1})^{r-1}\psi_{l}(x_i))\\
\nonumber &=&(g(\psi_{l}(x_i)))^r-(g(\psi_{l}(x_{l+1})))^{r-1}(g(\psi_{l}(x_i)))\\
\nonumber &=& \psi_{l}(g(x_i))^r-\psi_{l}(g(x_{l+1}))^{r-1}\psi_{l}(g(x_i))
\end{eqnarray}
where have used the induction hypothesis. It follows from 2 that $\psi_l(x_{l+1})$ is invariant and therefore $\psi_l(g(x_{l+1}))=\psi_l(x_{l+1})$. Hence
$$(g\circ\psi_{l+1})(x_i)=\psi_{l}(g(x_i))^r-\psi_{l}(x_{l+1})^{r-1}\psi_{l}(g(x_i))
=(\psi_{l+1}\circ g)(x_i)$$
and this finishes the proof.
\end{proof}

We consider the following families of polynomials in $\mathbb{F}[x_1,\ldots,x_n]$.
We use two parameters: $j\in\{-1,1\}$ and $\lambda\in\mathbb{F}$. Let  $m=\displaystyle\frac{n}{2}$ or $m=\displaystyle\frac{n-1}{2}$ if $n$ is even or odd, respectively. Now define
\begin{itemize}
    \item $\Omega_{0,1}=\sum_{i=1}^mx_{n-i+1}x_i$ and $\Omega_{0,-1}=0$;

    \item $\Omega_{s,j}=\sum_{i=1}^m(x_{n-i+1}^{r^s}x_i+jx_{n-i+1}x_i^{r^s})$ for $s\geq 1$;

    \item $\Gamma_{0,\lambda,}=\Omega_{0,1}+x_{m+1}^{2}+\lambda\,x_{m+2}^{2}$;

    \item $\Gamma_{s,\lambda,}=\Omega_{s,1}+2\cdot 1_\mathbb{F}(x_{m+1}^{r^{s}+1}+\lambda\,x_{m+2}^{r^{s}+1})$ for $s\geq 1$;
    \item $\Lambda_{s,\lambda}=\sum_{i=1}^m(x_{n-i+1}^{q^{2s-1}}x_i+x_{n-i+1}x_i^{q^{2s-1}})+\lambda\, x_{m+1}^{q^{2s-1}+1}$ for $s\geq 1$ and $\mathbb{F}=\mathbb{F}_{q^2}$.
\end{itemize}
We will apply the Steenrod operations to these polynomials (see the introduction for its definition). Here we take $\zeta=-1$ and we denote $\mathcal{P}(-1)$ by $\mathcal{P}^\bullet$. Hence $\mathcal{P}^\bullet:A\longrightarrow A$ is the $\mathbb{F}$-algebra homomorphism  given by $\mathcal{P}^\bullet(x_i)=x_i-x_i^r$. Also,
$$\mathcal{P}^\bullet(f)=\mathcal{P}^0(f)-\mathcal{P}^1(f)+\mathcal{P}^2(f)-\mathcal{P}^3(f)+\cdots$$
where $\mathcal{P}^i(f)$ is the $i$-th Steenrod operation on $f$.
The next Lemmas will be proved in Section 5.

\begin{lemma}\label{cor:StOpOmegaGammaLamba}
The Steenrod operations on the polynomials $\Omega_{s,j}$, $\Gamma_{s,\lambda}$ and $\Lambda_{s,\lambda}$ are given by:
\begin{enumerate}
    \item $\mathcal{P}^1(\Omega_{0,1})=\Omega_{1,1}$, $\mathcal{P}^1(\Gamma_{0,\lambda})=\Gamma_{1,\lambda}$ and $\mathcal{P}^1(\Lambda_{1,\lambda})=\Lambda_{1,\lambda}^q$;
    \item $\mathcal{P}^1(\Omega_{1,1})=2\Omega_{0,1}^r$, $\mathcal{P}^1(\Omega_{s,j})=\Omega_{s-1,j}^r$ for $s\geq 2$, \\  $\mathcal{P}^1(\Gamma_{s,\lambda})=\Gamma_{s-1,\lambda}^r$ for $s\geq 1$ and $\mathcal{P}^1(\Lambda_{s,\lambda})=\Lambda_{s,\lambda}^{q^2}$ for $s\geq 2$;
    \item $\mathcal{P}^{r^s}(\Omega_{s,j})=\Omega_{s+1,j}$, $\mathcal{P}^{r^s}(\Gamma_{s,\lambda})=\Gamma_{s+1,\lambda}$ and $\mathcal{P}^{q^{2s-1}}(\Lambda_{s,\lambda})=\Lambda_{s+1,\lambda}$ for $s\geq 1$;
    \item $\mathcal{P}^{r^s+1}(\Omega_{s,j})=\Omega_{s,j}^r$, $\mathcal{P}^{r^s+1}(\Gamma_{s,\lambda})=\Gamma_{s,\lambda}^r$ for $s \geq 0$ and \\
$\mathcal{P}^{q^{2s-1}+1}(\Lambda_{s,\lambda})=\Lambda_{s,\lambda}^{q^2} $ for $s\geq 1$;
    \item $\mathcal{P}^{i}(\Omega_{s,j})=0$, $\mathcal{P}^{i}(\Gamma_{s,\lambda})=0$ and $\mathcal{P}^{i}(\Lambda_{s,\lambda})=0$, otherwise.
\end{enumerate}
\end{lemma}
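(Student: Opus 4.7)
The plan is to use the characterization of the Steenrod operations via the multiplicative map $\mathcal{P}(\zeta)\colon A\to A[[\zeta]]$ that sends each variable $x_i$ to $x_i+x_i^\rho\zeta$, with $\rho=r$ for $\Omega,\Gamma$ and $\rho=q^2$ for $\Lambda$ (the latter because $\Lambda_{s,\lambda}\in\mathbb{F}_{q^2}[V]$, so the relevant Steenrod algebra is $\mathcal{A}_{q^2}$). Since $\mathcal{P}^i(f)$ is recovered as the coefficient of $\zeta^i$ in $\mathcal{P}(\zeta)(f)$ and $\mathcal{P}(\zeta)$ is a ring homomorphism, the entire lemma reduces to direct expansion on the monomial building blocks of the three polynomial families, followed by collection of the coefficients of each power of $\zeta$.

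Every summand of $\Omega_{s,j}$, $\Gamma_{s,\lambda}$ or $\Lambda_{s,\lambda}$ has the shape $x_a^{\rho^u}x_b^{\rho^v}$ with $u,v\in\{0,s,s+1\}$, and Frobenius in characteristic $p\mid\rho$ gives
\[
\mathcal{P}(\zeta)\bigl(x_a^{\rho^u}x_b^{\rho^v}\bigr)
=\bigl(x_a^{\rho^u}+x_a^{\rho^{u+1}}\zeta^{\rho^u}\bigr)\bigl(x_b^{\rho^v}+x_b^{\rho^{v+1}}\zeta^{\rho^v}\bigr),
\]
so at most four $\zeta$-monomials occur, with exponents in $\{0,\rho^u,\rho^v,\rho^u+\rho^v\}$. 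Specialising this to the summands of the three families shows that the only $\zeta$-exponents that can ever appear are $0$, $1$, $\rho^s$ and $\rho^s+1$. This already proves claim (5). Each of (1)--(4) is then obtained by collecting the coefficient of the relevant $\zeta$-power and recognising it as a neighbouring polynomial of the same family, possibly up to a Frobenius twist. For instance, in $\Omega_{s,j}$ with $s\ge 2$ the two halves $x_{n-i+1}^{r^s}x_i$ and $jx_{n-i+1}x_i^{r^s}$ of each summand contribute distinct $\zeta^1$-coefficients $x_{n-i+1}^{r^s}x_i^r$ and $jx_{n-i+1}^r x_i^{r^s}$, whose sum over $i$ is exactly $\Omega_{s-1,j}^r$; the $\zeta^{r^s}$-coefficient gives $\Omega_{s+1,j}$ and the $\zeta^{r^s+1}$-coefficient gives $\Omega_{s,j}^r$.

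The genuine care lies in the small-index boundary cases. When $s=1$ the $\zeta^1$-coefficients of the two halves collapse onto the single monomial $x_{n-i+1}^r x_i^r$, producing the extra factor $2$ in $\mathcal{P}^1(\Omega_{1,1})=2\Omega_{0,1}^r$; when $s=0$ the summand of $\Omega_{0,1}$ has only one half, and its two $\zeta^1$-terms combine to $\Omega_{1,1}$ with no extra constant. For $\Gamma$ one must additionally expand $\mathcal{P}(\zeta)\bigl(x_k^{\rho^s+1}\bigr)=(x_k+x_k^\rho\zeta)^{\rho^s+1}$, and the coefficient $2\cdot 1_{\mathbb{F}}$ built into $\Gamma_{s,\lambda}$ for $s\ge 1$ is precisely what is needed to match the binomial factor $2$ produced at the base step $s=0$ by $\mathcal{P}^1(x_k^2)=2x_k^\rho$. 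For $\Lambda$, the scalar $\lambda\in\mathbb{F}_{q^2}$ satisfies $\lambda=\lambda^{q^2}$, so no Frobenius twist appears on it. The main obstacle will therefore be purely combinatorial bookkeeping: the boundary cases $s=0,1$ must be handled separately from $s\ge 2$, and all constants and Frobenius twists tracked so that the coefficients land on exactly the polynomials named in (1)--(4).
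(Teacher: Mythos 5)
Your proposal is correct and is essentially the paper's own argument: the paper carries out the same monomial-by-monomial expansion of the multiplicative total Steenrod operation, only packaged via the specialisation $\mathcal{P}^\bullet=\mathcal{P}(-1)$ (Proposition \ref{prop:P(-1)omegagammadelta}) followed by isolating each $\mathcal{P}^i$ through the degree formula $\deg \mathcal{P}^i(f)=\deg f+i(r-1)$, whereas you keep $\zeta$ formal and read off the coefficients of $\zeta^i$ directly --- the same computation in slightly different bookkeeping. Two small points for your write-up: the base-step identity is $\mathcal{P}^1(x_k^2)=2x_k^{\rho+1}$ (not $2x_k^{\rho}$), and a careful execution of your expansion yields the boundary values $\mathcal{P}^1(\Gamma_{1,\lambda})=2\Gamma_{0,\lambda}^r$ and $\mathcal{P}^1(\Lambda_{s,\lambda})=\Lambda_{s-1,\lambda}^{q^2}$ for $s\ge 2$, in agreement with Proposition \ref{prop:P(-1)omegagammadelta} (and with your factor-of-two discussion for $\Omega_{1,1}$), so these cases should be recorded in that corrected form rather than as literally printed in items (2) of the statement.
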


\begin{lemma}\label{cor:psilOmegaGammaDeltaalgebramemb}
Assume by convention that $\Omega_{s,j}=0$, $\Gamma_{s,\lambda}=0$ for $s<0$ and $\Lambda_{s}=0$ for $s\leq 0$. Then:
\begin{enumerate}
    \item $\psi_l(\Omega_{s,j})\in \mathbb{F}[x_1,\psi_1(x_2),\ldots,\psi_{l-1}(x_l),\Omega_{s-1,j},\Omega_{s,j},\Omega_{s+1,j},\ldots,\Omega_{s+l,j}]$;
    \item $\psi_l(\Gamma_{s,\lambda})\in \mathbb{F}[x_1,\psi_1(x_2),\ldots,\psi_{l-1}(x_l),\Gamma_{s-1,\lambda},\Gamma_{s,\lambda},\Gamma_{s+1,\lambda},\ldots,\Gamma_{s+l,\lambda}]$;
    \item $\psi_l(\Lambda_{s,\lambda})\in \mathbb{F}[x_1,\psi_1(x_2),\ldots,\psi_{l-1}(x_l),\Lambda_{s-1,\lambda},\Lambda_{s,\lambda},\Lambda_{s+1,\lambda},\ldots,\Lambda_{s+l,\lambda}]$.
\end{enumerate}
\end{lemma}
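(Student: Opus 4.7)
The natural strategy is induction on $l$. The case $l=0$ is immediate because $\psi_0$ is the identity map, so each claim reduces to $\Omega_{s,j}\in\mathbb{F}[\Omega_{s,j}]$ (and similarly for $\Gamma$ and $\Lambda$). For the inductive step I would combine two facts: $\psi_l$ is an $\mathbb{F}$-algebra endomorphism, so it distributes over the products appearing in each of the three families; and by Proposition \ref{prop:psil}(3), on each linear generator $\psi_l(x_i)=\psi_{l-1}(x_i)^r - c\,\psi_{l-1}(x_i)$, where $c := \psi_{l-1}(x_l)^{r-1}$ is itself a power of the newly admissible generator $\psi_{l-1}(x_l)$.

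Writing $g_i := \psi_{l-1}(x_i)$ and using the characteristic-$p$ identity $(a-b)^{r^s}=a^{r^s}-b^{r^s}$ (valid since $r$ is a $p$-power), one obtains $\psi_l(x_i)^{r^s} = g_i^{r^{s+1}} - c^{r^s}g_i^{r^s}$. Substituting into
\[
\psi_l(\Omega_{s,j})=\sum_{i=1}^m\bigl(\psi_l(x_{n-i+1})^{r^s}\psi_l(x_i)+j\,\psi_l(x_{n-i+1})\psi_l(x_i)^{r^s}\bigr)
\]
and multiplying out yields four $c$-graded groups of sums. After using Frobenius to pull $r$-th powers outside, each group is recognisable as a $\psi_{l-1}$-image (or its $r$-th power) of one of $\Omega_{s-1,j},\Omega_{s,j},\Omega_{s+1,j}$, producing the recursion
\[
\psi_l(\Omega_{s,j}) = \psi_{l-1}(\Omega_{s,j})^r - c\,\psi_{l-1}(\Omega_{s+1,j}) - c^{r^s}\psi_{l-1}(\Omega_{s-1,j})^r + c^{r^s+1}\psi_{l-1}(\Omega_{s,j}),
\]
with the convention $\psi_{l-1}(\Omega_{-1,j})=0$. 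Feeding each right-hand factor into the induction hypothesis and using that $c$ is a power of the allowed generator $\psi_{l-1}(x_l)$ closes part (1). The same scheme handles $\Gamma_{s,\lambda}$ and $\Lambda_{s,\lambda}$: the extra middle-variable summands $x_{m+1}^{r^s+1}$, $\lambda x_{m+2}^{r^s+1}$ (resp.\ $\lambda x_{m+1}^{q^{2s-1}+1}$) are hit by $\psi_l$ and expanded by Frobenius into analogous $c$-graded pieces, which re-assemble alongside the $\Omega$-part into $\psi_{l-1}(\Gamma_{s',\lambda})$ or $\psi_{l-1}(\Lambda_{s',\lambda})$ for neighbouring $s'$; in the $\Lambda$ case the odd exponent $q^{2s-1}$ only forces minor bookkeeping, since one still has $(a-b)^{q^{2s-1}}=a^{q^{2s-1}}-b^{q^{2s-1}}$.

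The main obstacle is controlling the lower end of the index range. The recursion above feeds $\psi_{l-1}(\Omega_{s-1,j})^r$ back into the hypothesis, and a naive unwinding would pull in $\Omega_{s-2,j}$ and, after $l$ steps, slide down to $\Omega_{s-l,j}$, which is below the claimed bound $\Omega_{s-1,j}$. To stay inside the advertised ring one has to use the vanishing conventions ($\Omega_{s,j}=\Gamma_{s,\lambda}=0$ for $s<0$, $\Lambda_s=0$ for $s\le 0$) at the boundary, and to rewrite the persistent low-index $r$-th powers by means of the Steenrod-type identities of Lemma \ref{cor:StOpOmegaGammaLamba} — for instance $\mathcal{P}^1(\Omega_{s,j})=\Omega_{s-1,j}^r$ lets one trade $\Omega_{s-1,j}^r$ for something expressible in higher-index $\Omega$'s. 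Organising this telescoping carefully, case-split on whether $s\ge l$ or $s<l$ (so the boundary kicks in), is the most delicate part of the argument; the rest is bookkeeping of Frobenius and of the multiplicative grading by powers of $c$.
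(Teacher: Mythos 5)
Your core computation is exactly the paper's: expanding $\psi_l$ through the defining sums via Proposition \ref{prop:psil}(3) and Frobenius gives the recursion
\[
\psi_l(\Omega_{s,j})=\psi_{l-1}(\Omega_{s,j})^r-c\,\psi_{l-1}(\Omega_{s+1,j})-c^{r^s}\psi_{l-1}(\Omega_{s-1,j})^r+c^{r^s+1}\psi_{l-1}(\Omega_{s,j}),\qquad c=\psi_{l-1}(x_l)^{r-1},
\]
which is precisely Proposition \ref{psilOmegaGammaDelta} (together with its separate low-index variants for $\Omega_{0,1}$, $\Omega_{1,1}$, $\Gamma_{0,\lambda}$, $\Gamma_{1,\lambda}$, $\Lambda_{1,\lambda}$), and the induction on $l$, feeding each factor into the hypothesis and using that $c$ is a power of the newly admitted generator $\psi_{l-1}(x_l)$, is the paper's argument for the lemma.

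Where you go beyond the paper is your last paragraph, and that step is not sound. The identity $\mathcal{P}^1(\Omega_{s,j})=\Omega_{s-1,j}^r$ from Lemma \ref{cor:StOpOmegaGammaLamba} only says that $\Omega_{s-1,j}^r$ is a Steenrod image of $\Omega_{s,j}$; Steenrod operations do not preserve an arbitrary subalgebra, so this gives no way to ``trade'' $\Omega_{s-1,j}^r$ for a polynomial in the listed generators, which is what a membership claim needs. The paper performs no such manoeuvre. Your underlying observation is nevertheless correct: for an interior index $s$ the unwinding of the induction reaches down to $\Omega_{s-l,j}$, so what the recursion-plus-induction actually delivers is membership in $\mathbb{F}[x_1,\psi_1(x_2),\ldots,\psi_{l-1}(x_l),\Omega_{s-l,j},\ldots,\Omega_{s+l,j}]$ (indices truncated by the vanishing conventions); the advertised lower end $\Omega_{s-1,j}$ agrees with this exactly in the boundary cases $s\in\{0,1\}$ for $\Omega$ and $\Gamma$ (resp. $s=1$ for $\Lambda$, and $s\ge 1$ with $j=-1$, where $\Omega_{0,-1}=0$), because there all lower terms vanish. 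These are the only cases the paper ever uses (every $h_1$ is one of $\Omega_{0,1}$, $\Omega_{1,-1}$, $\Gamma_{0,\lambda}$, $\Lambda_{1,\lambda}$), and the paper handles them by invoking the boundary variants of the recursion rather than by any rewriting of low-index $r$-th powers. So drop the Steenrod-trading device and the $s\ge l$ versus $s<l$ case analysis built on it: either prove the statement in the form the induction gives (lower index $s-l$ with the conventions), or note that for the boundary values of $s$ actually needed the stated range is what the induction yields; no further mechanism is available or required.
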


\begin{proposition}\label{prop:degreesphijOmegaGammaDelta}
For every $l\geq 0$ and $s>0$, the polynomials $\psi_l(\Omega_{0,1})$, $\psi_l(\Omega_{s,j})$, $\psi_l(\Gamma_{0,\lambda})$, $\psi_l(\Gamma_{s,\lambda})$ and $\psi_l(\Lambda_{s,\lambda})$ belong to $\mathbb{F}[x_{1},\ldots,x_{n-l}]$. Moreover, for $0\leq l\leq m-1$, their degree in the variable $x_{n-l}$ is:
\begin{enumerate}
    \item  $r^l$ for $\psi_l(\Omega_{0,1})$ and $\psi_l(\Gamma_{0,\lambda})$;
    \item  $r^{l+s}$ for $\psi_l(\Omega_{s,j})$ and $\psi_l(\Gamma_{s,\lambda})$;
    \item  $q^{2l+2s-1}$ for $\psi_l(\Lambda_{s,\lambda})$.
\end{enumerate}
\end{proposition}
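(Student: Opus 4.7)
The plan is to reduce the claims to a careful but mechanical bookkeeping, exploiting the fact that $\psi_l$ is an $\mathbb{F}$-algebra endomorphism whose action on each variable is supported on very few indices. Specifically, by Proposition \ref{prop:psil}(1) together with the explicit formula $\psi_l(x_k) = F_{l,r}(x_k) = \prod_{u \in \langle x_1,\ldots,x_l\rangle_{\mathbb{F}}}(x_k - u)$, one has $\psi_l(x_k) = 0$ for $k \leq l$ and, for $k > l$, $\psi_l(x_k)$ lies in $\mathbb{F}[x_1,\ldots,x_l,x_k]$ and is a polynomial of degree exactly $r^l$ in $x_k$ with leading term $x_k^{r^l}$.

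For the support claim, I would expand $\Omega_{s,j}$, $\Gamma_{s,\lambda}$ and $\Lambda_{s,\lambda}$ in their defining sums and apply $\psi_l$ term by term. A summand in the $\Omega$-part (and analogously in $\Gamma$, $\Lambda$) becomes a product of factors $\psi_l(x_{n-i+1})$ and $\psi_l(x_i)$, which vanishes as soon as $i \leq l$. The surviving indices satisfy $l < i \leq m$, so their variables lie in $\{x_1,\ldots,x_l\} \cup \{x_{l+1},\ldots,x_m\} \cup \{x_{n-m+1},\ldots,x_{n-l}\} \subseteq \{x_1,\ldots,x_{n-l}\}$. The additional middle-variable contributions in $\Gamma$ and $\Lambda$ are of the form $\psi_l(x_{m+1})^{a}$ and $\psi_l(x_{m+2})^{b}$, and in the ambient dimension of each case one checks that $m+1, m+2 \leq n-l$ whenever $l \leq m-1$, so these too live in $\mathbb{F}[x_1,\ldots,x_{n-l}]$.

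For the degree in $x_{n-l}$, the decisive observation is that $x_{n-l}$ enters the expanded $\psi_l$-image only through the factor $\psi_l(x_{n-l})$. In each of the defining sums the unique summand containing $x_{n-l}$ is the one with $n-i+1 = n-l$, i.e.\ $i = l+1$, and $\psi_l(x_{l+1})$ is itself $x_{n-l}$-free because $l+1 < n-l$ whenever $l \leq m-1$. For $\Omega_{0,1}$ this isolates the product $\psi_l(x_{n-l})\psi_l(x_{l+1})$, giving $x_{n-l}$-degree $r^l$. For $\Omega_{s,j}$ with $s\geq 1$ the $i=l+1$ summand becomes $\psi_l(x_{n-l})^{r^s}\psi_l(x_{l+1}) + j\psi_l(x_{n-l})\psi_l(x_{l+1})^{r^s}$, of $x_{n-l}$-degrees $r^{l+s}$ and $r^l$ respectively, hence total $r^{l+s}$. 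For $\Lambda_{s,\lambda}$ the identical computation with $r^s$ replaced by $q^{2s-1}$ (and noting $r = q^2$) yields $r^l\cdot q^{2s-1} = q^{2l+2s-1}$. For $\Gamma_{0,\lambda}$ and $\Gamma_{s,\lambda}$ the middle-variable contributions $\psi_l(x_{m+1})^{a}$, $\psi_l(x_{m+2})^{b}$ are $x_{n-l}$-free in the range $l \leq m-1$, so the $x_{n-l}$-degree is inherited from the $\Omega$-part.

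The only real obstacle is administrative: one must keep track of the ambient dimension $n$ and the meaning of $m$ in each of the three geometric settings ($\Omega$, $\Gamma$, $\Lambda$), verify that $l+1 < n-l$ and $m+1, m+2 < n-l$ hold whenever $l \leq m-1$, and confirm that the leading contribution in $x_{n-l}$ does not cancel against anything else. The last point is immediate since $\psi_l(x_{n-l})$ has leading term $x_{n-l}^{r^l}$ with coefficient $1$ and appears in only one summand, whose companion factor $\psi_l(x_{l+1})$ is a nonzero polynomial in the remaining variables. Once these index checks are made, the result reduces to the single-term degree computation outlined above.
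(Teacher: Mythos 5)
Your argument is correct and essentially identical to the paper's own proof: both expand the defining sums, use $\psi_l(x_i)=F_{l,r}(x_i)=0$ for $i\le l$ with $F_{l,r}(x_i)$ of degree $r^l$ in $x_i$ and supported on $x_1,\ldots,x_l,x_i$, and read off the $x_{n-l}$-degree from the unique surviving summand with $i=l+1$. Your extra bookkeeping (the index checks $l+1<n-l$, $m+1,m+2\le n-l$ in each ambient dimension and the non-cancellation of the leading term) only makes explicit what the paper leaves as ``it is easy to see''.
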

\begin{proof}
Since $\psi_l(x_i)=0$ for all $i\leq l$, it is easy to see that $\psi_l(\Omega_{0,1}), \psi_l(\Omega_{s,j})$, $\psi_l(\Gamma_{0,\lambda})$, $\psi_l(\Gamma_{s,\lambda})$, $\psi_l(\Lambda_{s,\lambda})\in \mathbb{F}[x_{1},\ldots,x_{n-l}]$.
\\
By definition $\psi_l(x_i)=F_{l,r}(x_i)$ and it can be easily proven by induction on $l$ that $F_{l,r}(x_i)\in \mathbb{F}[x_{1},\ldots,x_{i}]$ with degree $r^l$ in $x_i$.
Since
$$\psi_l(\Omega_{0,1})=\sum_{i=l+1}^{m}\psi_l(x_{n-i+1})\psi_l(x_{i})$$
we conclude that $\psi_l(\Omega_{0,1})$ and, similarly,
$\psi_l(\Gamma_{0,\lambda})$ have degree equal to $r^l$ in $x_{n-l}$ for $0\leq l\leq m-1$.
For $\Omega_{s,j}$,  we have
$$\psi_l(\Omega_{s,j})=\sum_{i=l+1}^{m}\psi_l(x_{n-i+1})^{r^s}\psi_l(x_{i})+j\psi_l(x_{n-i+1})\psi_l(x_{i})^{r^s}$$
and therefore $\psi_l(\Omega_{s,j})$ has degree $r^{l+s}$ in $x_{n-l}$. Similar arguments give us the results for $\psi_l(\Gamma_{s,\lambda})$ and $\psi_l(\Lambda_{s,\lambda})$.
\end{proof}
We now introduce two subgroups of $U(2t+d,\mathbb{F})$.
Let $H^+$ be the set of matrices
$$\left(
\begin{tabular}{c|c|c}
$I_t$ & $0$ & $0$ \\
\hline
$0$ & $I_d$ & $0$\\
\hline
$C^+$ & $0$ & $I_t$
\end{tabular}
\right)$$
where $I_t$ and $I_d$ are the $t\times t$ and $d\times d$ identity matrices, respectively; and $C^+$ is any $t\times t$ matrix with entries in $\mathbb{F}$ such that
$$c_{i,j}^+=\overline{c}_{t-j+1,t-i+1}^+ \qquad \textnormal{ for all } i \textnormal{ and } j.$$
It is not hard to check that $H^+$ is an abelian subgroup of $U(2t+d,\mathbb{F})$.
If in the elements of $H^+$ we replace the matrix $C^+$ by a matrix $C^-$, of the same dimension, such that
$$c_{i,j}^-=-\overline{c}_{t-j+1,t-i+1}^- \qquad \textnormal{ for all } i \textnormal{ and } j$$
we obtain another abelian subgroup of $U(2t+d,\mathbb{F})$. We denote it by $H^-$.
Let $P[k]$ denote the polynomial ring $\mathbb{F}[x_1,\ldots,x_{t+d},x_{t+d+1},\ldots,x_{t+d+k}]$.

\begin{proposition}\label{prop:minimaldegrees}
Let $k\in \{1,\ldots,t\}$. Then the minimal degree in $x_{t+d+k}$ of:
\begin{enumerate}
    \item  a polynomial in $R[t+d+k]^{H^+}$ is greater than or equal to
$$\left\{
\begin{array}{ll}
q^{2(t-k)+1}& \textnormal{ if } \mathbb{F}=\mathbb{F}_{q^2}\\
q^{t-k+1}  & \textnormal{ if } \mathbb{F}=\mathbb{F}_{q}
\end{array}
\right..$$
    \item  a polynomial in $R[t+d+k]^{H^-}$ is greater than or equal to
$$\left\{
\begin{array}{ll}
q^{2(t-k)+1}& \textnormal{ if } \mathbb{F}=\mathbb{F}_{q^2}\\
q^{t-k}& \textnormal{ if } \mathbb{F}=\mathbb{F}_{q} \textnormal{ and } q \textnormal{ odd }\\
q^{t-k+1} & \textnormal{ if } \mathbb{F}=\mathbb{F}_{q} \textnormal{ and } q \textnormal{ even }
\end{array}
\right..$$
\end{enumerate}
\end{proposition}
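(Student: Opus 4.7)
The plan is to exhibit, for each $k\in\{1,\ldots,t\}$, a subgroup $L_k\le H^{\pm}$ that fixes $x_1,\ldots,x_{t+d+k-1}$ and acts on $x_{t+d+k}$ by a large group of additive shifts; since $R[t+d+k]^{H^{\pm}}\subseteq R[t+d+k]^{L_k}$, any lower bound on the positive $x_{t+d+k}$-degree of an $L_k$-invariant descends to an $H^{\pm}$-invariant.

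First I would make the action explicit. Writing a general element of $H^{\pm}$ as $N=I+C^{\pm}$, one has $(C^{\pm})^2=0$ (the nonzero block is strictly bottom-left), so $N^{-1}=I-C^{\pm}$. Dualising in the usual way yields $N\cdot x_{t+d+i}=x_{t+d+i}-\sum_{j=1}^{t}c^{\pm}_{i,j}x_j$ while $N$ fixes $x_1,\ldots,x_{t+d}$. Next, define $L_k$ by imposing $c^{\pm}_{i,j}=0$ for all $i<k$ and all $j$. Because of the symmetry $c^{\pm}_{i,j}=\pm\bar c^{\pm}_{t-j+1,\,t-i+1}$, this automatically forces $c^{\pm}_{a,b}=0$ whenever $b>t-k+1$. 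Thus $L_k\le H^{\pm}$ consists of those matrices supported in rows $i\ge k$ and columns $j\le t-k+1$, still subject to the original symmetry, and its elements fix $x_1,\ldots,x_{t+d+k-1}$ while shifting $x_{t+d+k}$ by $w:=\sum_{j=1}^{t-k+1}c^{\pm}_{k,j}x_j$.

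Let $W_k\subseteq\langle x_1,\ldots,x_{t-k+1}\rangle_{\mathbb{F}}$ be the set of such shifts $w$. I would compute $|W_k|$ by analysing the row-$k$ degrees of freedom: for $j<t-k+1$ the entry $c^{\pm}_{k,j}$ is self-independent (its symmetric partner $c^{\pm}_{t-j+1,\,t-k+1}$ lives in a different row and column), so it runs freely over $\mathbb{F}$; the anti-diagonal entry $c^{\pm}_{k,t-k+1}$ is self-paired and constrained only by $c=\pm\bar c$. Counting the solutions of this last equation in each regime gives exactly
\[
|W_k|=\begin{cases}
q^{2(t-k)+1}&\text{if }\mathbb{F}=\mathbb{F}_{q^2}\text{ (both signs: }q\text{ trace-zero or norm-one values)},\\
q^{t-k+1}&\text{if }\mathbb{F}=\mathbb{F}_q\text{ for }H^{+}\text{, or for }H^{-}\text{ with }q\text{ even},\\
q^{t-k}&\text{if }\mathbb{F}=\mathbb{F}_q\text{, }q\text{ odd and sign }-\text{ (anti-diagonal is forced to }0\text{)}.
\end{cases}
\]

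Finally, I would invoke the following standard fact to close: if a group acts on $R[Y]$ (with $R$ a domain) fixing $R$ and sending $Y\mapsto Y-w$ for $w$ ranging over a set $W$ of $|W|$ distinct elements of $R$, then any nonzero invariant of positive $Y$-degree has $Y$-degree at least $|W|$, because in an algebraic closure of $\mathrm{Quot}(R)$ every root $\alpha$ of such an invariant produces the $|W|$ distinct roots $\alpha+w$. Applying this with $R=\mathbb{F}[x_1,\ldots,x_{t+d+k-1}]$, $Y=x_{t+d+k}$ and the $L_k$-image $W_k$, the claimed bounds follow. The only delicate point — which I see as the main obstacle — is the bookkeeping of the symmetry constraints: one must verify that imposing $c^{\pm}_{i,j}=0$ for $i<k$ does not collapse the self-paired anti-diagonal entry $c^{\pm}_{k,t-k+1}$, and one must separate the three subcases of $H^{-}$ according to whether $c+\bar c=0$ has $q$, $1$, or $q$ solutions in $\mathbb{F}$.
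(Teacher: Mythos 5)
Your argument is correct, and its first half coincides with the paper's own proof: the paper also restricts to exactly these subgroups (its $L_k^{+}$ and $L_k^{-}$, obtained by zeroing the first $k-1$ rows of $C^{\pm}$, which by the symmetry also kills the columns beyond $t-k+1$), and it performs the same count of the row-$k$ parameters, including the case analysis of the self-paired anti-diagonal entry (Lemma~\ref{le:C+C-}). Where you genuinely differ is the closing step. The paper determines the full invariant ring $R[t+d+k]^{L_k^{\pm}}$: it computes the orbit product $N(x_{t+d+k})$ explicitly (via Lemma~\ref{le:Fq2Nxm} when $\mathbb{F}=\mathbb{F}_{q^2}$), checks that the product of the degrees of $x_1,\ldots,x_{t+d+k-1},N(x_{t+d+k})$ equals the group order, and invokes Theorem~\ref{the:polynomialring} to conclude that these elements generate the invariant ring (Propositions~\ref{pro:Lk+} and~\ref{pro:Lk-}), from which the minimal positive $x_{t+d+k}$-degree is read off. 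You bypass all of this with an elementary root-counting fact: an $L_k$-invariant of positive degree in $Y=x_{t+d+k}$, regarded over ${\rm Quot}(\mathbb{F}[x_1,\ldots,x_{t+d+k-1}])$, vanishes, together with each root $\alpha$, at all the distinct elements $\alpha+w$ with $w\in W_k$, so its $Y$-degree is at least $|W_k|$; this is sound, since $W_k$ consists of $|W_k|$ distinct linear forms fixed by the group. Your route is shorter and needs neither the polynomial-ring criterion nor the explicit norm; the paper's longer route additionally delivers the explicit generators and the formula for $N(x_{t+d+k})$, and it is what Remark~\ref{re:H+-diagonalzero} appeals to for the sharpened bound $q^{t-k}$ in the even-characteristic orthogonal cases (a refinement your orbit count yields just as easily). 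One terminological slip: over $\mathbb{F}_{q^2}$ the self-paired entry for $H^{+}$ satisfies $c=\bar c$, i.e.\ lies in $\mathbb{F}_q$, giving exactly $q$ Frobenius-fixed values, not ``norm-one'' values (of which there are $q+1$); since you used the count $q$, the stated bounds are unaffected.
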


\begin{remark}\label{re:H+-diagonalzero}
Assume that $\mathbb{F}=\mathbb{F}_{q}$ with $q$ even and that in the elements of $H^-$ the matrices $C^-$ also satisfy $c_{i,t-i+1}^-=0$. Then it follows from the proof of Proposition \ref{pro:Lk-} that the minimal degree in $x_{t+d+k}$ of a polynomial in $R[t+d+k]^{H^-}$ will be greater than or equal to $q^{t-k}$.
\end{remark}

The invariant rings for the following two subgroups of $U(n,\mathbb{F})$ will be important in the subsequent subsections. Let
\begin{itemize}
    \item $U_1$ be the set of elements $u \in U(n,\mathbb{F})$ such that $u(x_j)=x_j+\sum_{k=1}^{j-1}a_{jk}x_k$, for $1\leq j\leq n-1$ and  $u(x_n)=x_n+\sum_{k=1}^{n-2}a_{nk}x_k$;

    \item $U_2$ be the set of elements $u \in U(n,\mathbb{F}_{q^2})$ such that $u(x_j)=x_j+\sum_{k=1}^{j-1}a_{jk}x_k$, for $1\leq j\leq n-1$ and  $u(x_n)=x_n+bx_{n-1}+\sum_{k=1}^{n-2}a_{nk}x_k$ with $b+\bar{b}=0$.
\end{itemize}

\begin{lemma}\label{le:phijingeneral}
Let $U_1$ and $U_2$ be the groups defined above. For each $j\in\{1,\ldots,n\}$ and $k\in\{1,2\}$ we have
$$\mathbb{F}[x_1,x_2,\ldots,x_j]^{U_k}=\mathbb{F}[x_1,N(x_2),\ldots,N(x_j)]$$
where $N(x_i)$ is the orbit product of $x_i$ for $i\leq j$. Furthermore, the degree in $x_j$ of $N(x_j)$ is minimal among the elements in $\mathbb{F}[x_1,x_2,\ldots,x_j]^{U_k}$.
\end{lemma}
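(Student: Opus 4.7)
I would split the argument at $j = n$: for $j < n$ the two groups $U_1$ and $U_2$ act on $\mathbb{F}[x_1,\ldots,x_j]$ as the full lower uni-triangular group $U(j,\mathbb{F})$, so the claim reduces to the classical result already invoked in the introduction; only at $j = n$ do the defining constraints of $U_1$ and $U_2$ play a role.

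For $j < n$, every coefficient $a_{ik}$ with $i \leq j$, $k < i$ in an element of $U_1$ or $U_2$ is unconstrained, so the restriction to $\mathbb{F}[x_1,\ldots,x_j]$ surjects onto $U(j,\mathbb{F})$ and the orbit of $x_i$ is the classical $\{x_i + w : w \in \langle x_1,\ldots,x_{i-1}\rangle_\mathbb{F}\}$ of size $r^{i-1}$. Their product equals $|U(j,\mathbb{F})|$, and because each $N(x_i)$ lies in $\mathbb{F}[x_1,\ldots,x_i]$ with leading monomial a pure power of $x_i$, the polynomials $x_1, N(x_2), \ldots, N(x_j)$ are algebraically independent, so Theorem \ref{the:polynomialring} yields the polynomial ring presentation.

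For $j = n$ I would compute the orbit of $x_n$ separately in the two cases. Under $U_1$ the orbit is $\{x_n + w : w \in \langle x_1, \ldots, x_{n-2}\rangle\}$, of size $r^{n-2}$; under $U_2$ the condition $b + \bar b = 0$ confines the coefficient of $x_{n-1}$ to the kernel of the trace $\mathbb{F}_{q^2} \to \mathbb{F}_q$, an $\mathbb{F}_q$-subspace of order $q$, yielding orbit size $q \cdot q^{2(n-2)} = q^{2n-3}$. A parameter count of $|U_k|$ (which multiplies $r^{i-1}$ over $i \leq n-1$ by the smaller top-row factor) matches the product of these orbit sizes in both cases, so Theorem \ref{the:polynomialring} applies verbatim to the algebraically independent invariants $x_1, N(x_2), \ldots, N(x_n)$.

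The minimality claim then drops out of the presentation: every generator other than $N(x_j)$ lies in $\mathbb{F}[x_1,\ldots,x_{j-1}]$, so $x_j$ can enter an invariant only through powers of $N(x_j)$, forcing every positive $x_j$-degree to be an integer multiple of $\deg_{x_j} N(x_j)$. The only delicate input is the trace-zero count in the $U_2$ case at $j = n$; once that factor of $q$ is correctly identified and matched against $|U_2|$, the remainder is a routine application of the invariant-theoretic template already used in the introduction.
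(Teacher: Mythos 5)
Your proposal is correct and follows essentially the same route as the paper: identify the action on $\mathbb{F}[x_1,\ldots,x_{n-1}]$ with the full unitriangular group, compute the degree of the orbit product of $x_n$ (for $U_2$ via the trace-zero count of size $q$), match the degree product against $|U_k|$, and apply Theorem \ref{the:polynomialring}. The only cosmetic difference is that the paper obtains the $x_n$-degree from the explicit formula for $N(x_n)$ in Lemma \ref{le:Fq2Nxm}, whereas you count the orbit directly, and you spell out the minimality argument that the paper leaves implicit.
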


\subsection{The Invariant Field of a Sylow $p$-subgroup of $GU(2m,q^2)$}
Here $\mathbb{F}=\mathbb{F}_{q^2}$ and $n=2m$. Let $G$ denote the Sylow $p$-subgroup of $GU(2m,q^2)$ given in Lemma \ref{le:sylowGUandSp}.
First, we introduce a family of polynomials which we shall prove to be invariants under the action of $G$. For $k\geq 1$ define
$$h_k:=\Lambda_{k,0}.$$
Thus $h_1=\sum_{i=1}^m(x_{2m-i+1}^{q}x_i+x_{2m-i+1}x_i^{q})$.
\begin{lemma}\label{le:hkinvariantsGUeven}
For all $k\geq 1$ the polynomials $h_k$ belong to $\mathbb{F}_{q^2}[V]^{GU(2m,q^2)}$.
\end{lemma}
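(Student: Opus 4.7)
The plan is to show first that $h_1$ is invariant under the full unitary group $GU(2m,q^2)$, essentially by definition, and then to obtain the invariance of the higher $h_k$ by writing each as an iterated Steenrod image of $h_1$ and invoking the fact that Steenrod operations commute with the action of $\mathrm{GL}(V)$.

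For the base case, observe that
\[
h_1 = \Lambda_{1,0} = \sum_{i=1}^{m}\bigl(x_{2m-i+1}^{q}\,x_i + x_{2m-i+1}\,x_i^{q}\bigr)
\]
is precisely the non-degenerate hermitian form on $V = \mathbb{F}_{q^2}^{2m}$ whose Gram matrix is $J_{2m}$, with respect to which $GU(2m,q^2)$ is defined in Lemma \ref{le:sylowGUandSp}. Since $GU(2m,q^2) = \mathrm{Stab}_{\mathrm{GL}(V)}(h_1)$, we get $h_1 \in \mathbb{F}_{q^2}[V]^{GU(2m,q^2)}$ for free. (One can also verify this directly by checking that $N^{T}J_{2m}\overline{N} = J_{2m}$ forces $N\cdot h_1 = h_1$, using the convention on coordinates fixed in Section \ref{sec:sylowdescription}.)

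For the inductive step, I would argue that for every $s\ge 1$,
\[
h_{s+1} = \Lambda_{s+1,0} = \mathcal{P}^{q^{2s-1}}(\Lambda_{s,0}) = \mathcal{P}^{q^{2s-1}}(h_s),
\]
which is exactly item (3) of Lemma \ref{cor:StOpOmegaGammaLamba} specialised to $\lambda = 0$. Thus each $h_k$ is obtained from $h_1$ by a finite composition of Steenrod operators $\mathcal{P}^{q^{2s-1}}$.

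To conclude, I would appeal to the general fact, recalled in the introduction, that the Steenrod algebra $\mathcal{A}_{q^2}$ acts on $\mathbb{F}_{q^2}[V]$ commuting with the natural action of $\mathrm{GL}(V)$, so each $\mathcal{P}^{i}$ preserves $\mathbb{F}_{q^2}[V]^{H}$ for any subgroup $H \le \mathrm{GL}(V)$. Applied inductively with $H = GU(2m,q^2)$, starting from the invariant $h_1$, this yields $h_k \in \mathbb{F}_{q^2}[V]^{GU(2m,q^2)}$ for every $k \ge 1$. No real obstacle arises here: the only content is the identification of $h_1$ with the defining hermitian form and the Cartan-style Steenrod computation, both of which are supplied by earlier results.
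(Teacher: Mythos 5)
Your proposal is correct and follows essentially the same route as the paper: reduce to $k=1$ via the relation $h_{s+1}=\mathcal{P}^{q^{2s-1}}(h_s)$ from Lemma \ref{cor:StOpOmegaGammaLamba} together with the fact that the Steenrod algebra commutes with the $\mathrm{GL}(V)$-action, and then observe that $h_1(v)=v^TJ_{2m}\bar v$ is the hermitian form defining $GU(2m,q^2)$, hence invariant by definition. The paper merely makes the base case slightly more explicit by evaluating over the algebraic closure and checking $(M.h_1)(v)=h_1(M^{-1}v)=h_1(v)$ directly, which is the verification you mention parenthetically.
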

\begin{proof}
From Lemma \ref{cor:StOpOmegaGammaLamba} we get $h_k=\mathcal{P}^{q^{2k-3}}(h_{k-1})$ for $k>1$, where $\mathcal{P}^{q^{2k-3}}$ is the $q^{2k-3}$-th Steenrod operation. Hence it is enough to prove that $h_1$ is an invariant polynomial. In order to use polynomial functions rather than
polynomials in ${\rm Sym}(V^*)$ we take $v=(\alpha_1,\cdots,\alpha_n)\in\bar{V}=\bar{\mathbb{F}}_{q^2}^n$, where $\bar{\mathbb{F}}_{q^2}$ is the algebraic closure of $\mathbb{F}_{q^2}$.
Thus $h_1(v)=\sum_{i=1}^m(\alpha_{2m-i+1}^{q}\alpha_i+\alpha_{2m-i+1}\alpha_i^{q})=$
$h_1(v)=v^TJ_{2m}\bar{v}$.
Now, let $M \in GU(2m,q^2)$, then $(M.h_1)(v)=h_1(M^{-1}v)=$ $(M^{-1}v)^TJ_{2m}\overline{M^{-1}v}=$ $$v^T(M^{-1})^TJ_{2m}\bar{M}^{-1}\bar{v}=v^TJ_{2m}\bar{v}=h_1(v),$$
where we have used the definition of $GU(2m,q^2)$.
Hence $M.h_1=h_1$.
\end{proof}

\begin{theorem}\label{the:invariantfieldGUeven}
Let $G$ be the Sylow $p$-group of $GU(2m,q^2)$ as described in Lemma \ref{le:sylowGUandSp}.
The invariant field $\mathbb{F}_{q^2}(V)^G$ is generated by the polynomials $N(x_j)$, with $j=1,\ldots,m+1$, and the polynomials $h_k$, with $k=1,\ldots,m-1$,i.e.,
\[
\mathbb{F}_{q^2}(V)^G=\mathbb{F}_{q^2}(x_1,N(x_2),\ldots,N(x_{m+1}),h_1,\ldots,h_{m-1}).
\]
\end{theorem}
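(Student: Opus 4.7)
The strategy is to apply Theorem \ref{the:invfieldgen}: I will exhibit for each $j \in \{1,\ldots,2m\}$ a $G$-invariant $\phi_j\in R[j]^G$ of minimal positive degree in $x_j$, and then verify that $\mathbb{F}_{q^2}(\phi_1,\ldots,\phi_{2m})$ coincides with the field stated in the theorem. The construction splits according to whether $j$ lies in the ``upper'' or ``lower'' half.

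For $1\le j\le m+1$, inspecting the block description of $G$ in Lemma \ref{le:sylowGUandSp}, the action of $G$ on $\mathbb{F}_{q^2}[x_1,\ldots,x_{m+1}]$ factors through a subgroup of $U(m+1,\mathbb{F}_{q^2})$ of the form $U_1$ or $U_2$ treated in Lemma \ref{le:phijingeneral}; that lemma gives $R[j]^G=\mathbb{F}_{q^2}[x_1,N(x_2),\ldots,N(x_j)]$ and shows $N(x_j)$ has minimal degree in $x_j$. So I set $\phi_j=N(x_j)$ on this range.

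For $j=m+1+k$ with $k\in\{1,\ldots,m-1\}$, I would take $\phi_j=\psi_{m-1-k}(h_1)$. Invariance is immediate: $h_1$ is $GU(2m,q^2)$-invariant by Lemma \ref{le:hkinvariantsGUeven}, hence $G$-invariant, and $\psi_l$ commutes with the $G$-action by Proposition \ref{prop:psil}-(4). Proposition \ref{prop:degreesphijOmegaGammaDelta}-(3) then places $\phi_j$ in $R[m+1+k]$ with $x_{m+1+k}$-degree exactly $q^{2m-2k-1}$. The crux is matching this with a sharp lower bound: I would isolate inside $G$ the abelian subgroup obtained by setting $A=I$, $B=0$, $F=I_2$ and letting $S$ vary (so $S+\bar S^T=0$). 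Under the parametrisation of Section \ref{sec:sylowdescription} this is precisely a subgroup of the form $H^{\pm}$ of Proposition \ref{prop:minimaldegrees} with $t=m-1$, $d=2$, so that proposition (case $\mathbb{F}=\mathbb{F}_{q^2}$) gives a lower bound of $q^{2(t-k)+1}=q^{2m-2k-1}$ on the $x_{t+d+k}=x_{m+1+k}$-degree of any invariant in $R[m+1+k]^{H^{\pm}}$; since $H^{\pm}\le G$ the same bound applies to $G$-invariants, and $\phi_j$ attains it.

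Finally, I would translate the generating set produced by Theorem \ref{the:invfieldgen} into the claimed one. For $j\le m+1$ we have $\phi_j=N(x_j)$, trivially inside the claimed field. For $j=m+1+k$, Lemma \ref{cor:psilOmegaGammaDeltaalgebramemb}-(3), together with $\psi_{l-1}(x_l)=N(x_l)$ from Proposition \ref{prop:psil}-(2) and the convention $\Lambda_{0,0}=0$, yields $\phi_j\in \mathbb{F}_{q^2}[x_1,N(x_2),\ldots,N(x_{m-1-k}),h_1,\ldots,h_{m-k}]$, which sits inside $\mathbb{F}_{q^2}[x_1,N(x_2),\ldots,N(x_{m+1}),h_1,\ldots,h_{m-1}]$. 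For the reverse inclusion, I would recover the $h_k$ from the $\phi$'s by downward induction on the index $l=m-1-k$: the base case is $\phi_{2m}=\psi_0(h_1)=h_1$; and assuming $h_1,\ldots,h_l$ lie in the subfield generated by the $\phi$'s, the containment just described expresses $\phi_{2m-l}=\psi_l(h_1)$ as a polynomial in $h_1,\ldots,h_{l+1}$ and the $N(x_i)$'s in which the coefficient of $h_{l+1}$ does not involve $x_{2m-l}$. By Proposition \ref{prop:degreesphijOmegaGammaDelta}-(3) only $h_{l+1}$ contributes the full $x_{2m-l}$-degree $q^{2l+1}$, which forces that coefficient to be nonzero, so $h_{l+1}$ is a rational function of the already-available data.

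The main obstacle is the third step: matching the naturally occurring abelian subgroup of $G$ (where $A=I$, $B=0$) to one of the normalised forms $H^{\pm}$ of Proposition \ref{prop:minimaldegrees} with the correct parameters, and confirming that the bound obtained there is exactly the degree of $\psi_{m-1-k}(h_1)$. Everything else is bookkeeping through Lemma \ref{cor:psilOmegaGammaDeltaalgebramemb} and elementary field theory.
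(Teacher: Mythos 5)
Your proposal is correct and follows essentially the same route as the paper: the same choices of $\phi_j$ (norms for $j\le m+1$ via the $U_2$-action and Lemma \ref{le:phijingeneral}, and $\phi_{m+1+k}=\psi_{m-1-k}(h_1)$ beyond), the same abelian subgroup with $A=I$, $B=0$ identified as $H^-$ with $t=m-1$, $d=2$ so that Proposition \ref{prop:minimaldegrees} gives the matching lower bound $q^{2(m-k)-1}$, and Lemma \ref{cor:psilOmegaGammaDeltaalgebramemb} for rewriting the $\psi_l(h_1)$ in terms of the claimed generators. The only deviation is your closing downward induction recovering each $h_{l+1}$ rationally from the $\phi$'s, which is superfluous (and, as written, its implicit linearity in $h_{l+1}$ would need Proposition \ref{psilOmegaGammaDelta} rather than the membership lemma alone): since every $h_k$ is a $G$-invariant, it automatically lies in $\mathbb{F}_{q^2}(V)^G=\mathbb{F}_{q^2}(\phi_1,\ldots,\phi_{2m})$, which is how the paper concludes.
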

\begin{proof}
We shall use Theorem \ref{the:invfieldgen} to get the result.
We start by noting that the matrices $F$ in the elements of $G$ look like
$$\left(
\begin{tabular}[h]{cc}
$1$ & $0$ \\
$c$ &$1$
\end{tabular}
\right)$$
where $c$ is an element in $\mathbb{F}_{q^2}$ satisfying $c+\bar{c}=0$. Hence $G$ acts on $R[m+1]$ in the same way as the group $U_2$ in Lemma \ref{le:phijingeneral} and we obtain that $R[j]^G=R[j]^{U_2}$ for each $j\in\{1,\ldots,m+1\}$. It also follows from Lemma \ref{le:phijingeneral} that $N(x_j)$ is an element in $R[j]^G$ of minimal degree in $x_j$. Therefore, for each $j\in\{1,\ldots,m+1\}$ we choose $\phi_j=N(x_j)$.

Now, if we consider all the elements of $G$ for which $A$ and $F$ are the identity matrices and $B$ the zero matrix, then we obtain an abelian subgroup $H$ of $G$ whose elements are
$$
\left(
\begin{tabular}[h]{c|c|c}
$I_{m-1}$ &$0$ & $0$ \\[-0.4 cm]
& \\
\hline
& \\[-0.4 cm]
$0$ &$I_2$ & $0$
\\[-0.4 cm]
& \\
\hline
& \\[-0.4 cm]
$J_{m-1}\bar{S}$ &$0$ &$I_{m-1}$
\end{tabular}
\right)
$$
with $S\in M(m-1,q^2)$ such that $S+\bar{S}^T=0$ and $J_{m-1}$ is the matrix given by (\ref{eq:Jm}) in
Section \ref{sec:sylowdescription}. Let $C=J_{m-1}\bar{S}$. Note that the multiplication by $J_{m-1}$ swaps the rows $i$ and $(m-1)-i+1=m-i$ of $\bar{S}$ for all $i$. Thus, since $S+\bar{S}^T=0$ we obtain
$c_{i,j}=\bar{s}_{m-i,j}=-s_{j,m-i}=-\bar{c}_{m-j,m-i}.$
Now, assume that $C$ is a $(m-1)\times (m-1)$ matrix with entries in $\mathbb{F}_{q^2}$ such that $c_{i,j}=-\overline{c}_{(m-1)-j+1,(m-1)-i+1}=-\overline{c}_{m-j,m-i}.$
By taking $S=J_{m-1}\bar{C}$  we get
$s_{i,j}=-\overline{c}_{m-i,j}=-c_{m-j,i}=-\overline{s}_{j,i}$
and therefore $S+\bar{S}^T=0$. Hence $H$ is the subgroup $H^-$ with $t=m-1$ and $d=2$.
Let $k\in\{1,\ldots,m-1\}$.
Since $R[m+1+k]^G\subset R[m+1+k]^{H^-},$
applying Proposition \ref{prop:minimaldegrees} we obtain that the minimal degree in $x_{m+1+k}$ of an element in $R[m+1+k]^G$ is greater than or equal to $q^{2(m-k)-1}$. By Proposition \ref{prop:degreesphijOmegaGammaDelta} this is the $x_{m+1+k}$-degree of
$\psi_{m-1-k}(\Lambda_{1,0})=\psi_{m-1-k}(h_1).$
Now, we have $\psi_{m-1-k}(h_1)\in R[m+1+k]^G$ by Proposition \ref{prop:psil}-4 and Lemma \ref{le:hkinvariantsGUeven}. Hence we can take $\phi_{m+1+k}=\psi_{m-1-k}(h_1)$. 
It follows from Theorem \ref{the:invfieldgen} that
$$\mathbb{F}_{q^2}(V)^G=\mathbb{F}_{q^2}(x_1,N(x_2),\ldots,N(x_{m+1}),\psi_{m-2}(h_1),\ldots,\psi_1(h_1),h_1).$$
Applying Lemma \ref{cor:psilOmegaGammaDeltaalgebramemb} and Proposition \ref{prop:psil}-2 we get for each $k<m-1$
$$
\psi_{m-1-k}(h_1)\in \mathbb{F}_{q^2}[x_1,N(x_2),\ldots,N(x_{m-1-k}),h_1,h_2,\ldots,h_{m-k}].
$$
Hence
$$\mathbb{F}_{q^2}(V)^G=\mathbb{F}_{q^2}(x_1,N(x_2),\ldots,N(x_{m+1}),h_{m-1},\ldots,h_{1})$$
and this finishes the proof.
\end{proof}

\subsection{The Invariant Field of a Sylow $p$-subgroup of $GU(2m+1,q^2)$}
Here $\mathbb{F}=\mathbb{F}_{q^2}$ and $n=2m+1$. We consider the following family of polynomials: for $k\geq 1$ let
$$h_k:=\Lambda_{k,1}.$$
Thus $h_1=\sum_{i=1}^m(x_{2m+1-i+1}^{q}x_i+x_{2m+1-i+1}x_i^{q})+x_{m+1}^{q+1}$.

\begin{lemma}\label{le:hkinvGUodd}
For all $k\geq 1$ the  $h_k$ belong to $\mathbb{F}_{q^2}[V]^{GU(2m+1,q^2)}$.
\end{lemma}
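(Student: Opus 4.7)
The plan is to mimic the proof of Lemma \ref{le:hkinvariantsGUeven} essentially verbatim, since the only difference between $\Lambda_{k,0}$ and $\Lambda_{k,1}$ is the extra middle term $\lambda\, x_{m+1}^{q^{2k-1}+1}$, which corresponds exactly to the contribution of the fixed diagonal middle entry of the anti-diagonal hermitian form in odd dimension.

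First I would reduce to the case $k=1$. By Lemma \ref{cor:StOpOmegaGammaLamba}-(3) we have
\[
\mathcal{P}^{q^{2k-3}}(\Lambda_{k-1,1}) = \Lambda_{k,1}, \qquad \text{for } k \geq 2,
\]
so $h_k = \mathcal{P}^{q^{2k-3}}(h_{k-1})$. Since the Steenrod operations commute with the natural action of $GL(V)$ on $\mathbb{F}_{q^2}[V]$, and hence with the action of the subgroup $GU(2m+1,q^2)$, it suffices to prove $h_1 \in \mathbb{F}_{q^2}[V]^{GU(2m+1,q^2)}$, because invariance then propagates to all $h_k$ by induction on $k$.

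Next I would identify $h_1$ as the hermitian form defining $GU(2m+1,q^2)$. Pass to polynomial functions by evaluating at $v = (\alpha_1,\dots,\alpha_{2m+1})^T \in \bar{\mathbb{F}}_{q^2}^{\,2m+1}$. Writing the anti-diagonal matrix $J_{2m+1}$ from equation (\ref{eq:Jm}), one has
\[
v^T J_{2m+1} \bar{v} \;=\; \sum_{i=1}^{2m+1} \alpha_i \alpha_{2m+2-i}^{q}.
\]
The middle index $i=m+1$ contributes $\alpha_{m+1}^{q+1}$, while pairing $i$ with $2m+2-i$ for $i=1,\dots,m$ produces exactly the two terms $\alpha_{2m+2-i}^q \alpha_i + \alpha_{2m+2-i}\alpha_i^q$. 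Thus $h_1(v) = v^T J_{2m+1}\bar{v}$, which is (a scalar multiple of) the defining hermitian form of $GU(2m+1,q^2)$.

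Finally, for any $M \in GU(2m+1,q^2)$, the defining relation $M^T J_{2m+1}\bar{M} = J_{2m+1}$ gives
\[
(M\cdot h_1)(v) \;=\; h_1(M^{-1}v) \;=\; v^T (M^{-1})^T J_{2m+1} \overline{M^{-1}}\,\bar v \;=\; v^T J_{2m+1}\bar v \;=\; h_1(v),
\]
so $h_1$ is invariant, completing the proof. The only minor point to watch is that $GU(2m+1,q^2)$ is indeed defined via the anti-diagonal hermitian form $J_{2m+1}$ (which is the convention used throughout this paper and which makes the Sylow description of Lemma \ref{le:sylowGUandSp}-(2) work with lower uni-triangular matrices); no genuine obstacle arises.
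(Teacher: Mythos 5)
Your proposal is correct and follows the paper's own route: reduce to $k=1$ via $h_k=\mathcal{P}^{q^{2k-3}}(h_{k-1})$ and the fact that Steenrod operations commute with the $GL(V)$-action, then check that $h_1(v)=v^TJ_{2m+1}\bar v$ is the defining hermitian form (the paper simply says this step is ``entirely analogous'' to Lemma \ref{le:hkinvariantsGUeven}, which you have spelled out, correctly noting that the matrix $X$ with $X_1=J_m$, $X_2=[1]$ is exactly $J_{2m+1}$).
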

\begin{proof}
From Lemma \ref{cor:StOpOmegaGammaLamba} we get $h_k=\mathcal{P}^{q^{2k-3}}(h_{k-1})$ for $k>1$. Hence it suffices to prove that $h_1$ is invariant. This now is entirely analogous to
the arguments in the proof of Lemma \ref{le:hkinvariantsGUeven}.
\end{proof}

\begin{theorem}\label{the:invariantfieldGUodd}
Let $G$ denote the Sylow $p$-subgroup of $GU(2m+1,q^2)$ given by Lemma \ref{le:sylowGUandSp}.
The invariant field $\mathbb{F}_{q^2}(V)^G$ is generated by the polynomials $N(x_j)$, with $j=1,\ldots,m+1$, and the polynomials $h_k$, with $k=1,\ldots,m$, i.e.,
\[
\mathbb{F}_{q^2}(V)^G=\mathbb{F}_{q^2}(x_1,N(x_2),\ldots,N(x_{m+1}),h_1,\ldots,h_{m}).
\]
\end{theorem}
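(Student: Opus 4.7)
The strategy mirrors the proof of Theorem~\ref{the:invariantfieldGUeven}: I would apply Theorem~\ref{the:invfieldgen} and construct, for each $j \in \{1, \ldots, 2m+1\}$, an invariant $\phi_j \in R[j]^G$ of minimal $x_j$-degree.

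The first step handles $j \leq m+1$. By Lemma~\ref{le:sylowGUandSp}(2) the middle block $F$ is the scalar $1$ and $B$ is an unrestricted $1\times m$ matrix, so $G$ acts on $R[j]$ as the full lower unitriangular group $U(j, q^2)$. By the same argument as in Lemma~\ref{le:phijingeneral},
\[R[j]^G = \mathbb{F}_{q^2}[x_1, N(x_2), \ldots, N(x_j)]\]
and $N(x_j)$ has minimal $x_j$-degree among invariants. Set $\phi_j := N(x_j)$ for $j = 1, \ldots, m+1$.

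The second step handles $j = m+1+k$ with $k \in \{1, \ldots, m\}$. Following the even-rank case I consider the abelian subgroup $H \leq G$ of matrices with $A = I_m$, $B = 0$, $F = 1$; each such matrix has bottom-left block $C := J_m \bar S$ with $S + \bar S^T = 0$. The verbatim computation of the proof of Theorem~\ref{the:invariantfieldGUeven} gives $c_{i,j} = -\bar c_{m+1-j, m+1-i}$, so $H$ is the subgroup $H^-$ of Proposition~\ref{prop:minimaldegrees} with parameters $t = m$, $d = 1$, $\mathbb{F} = \mathbb{F}_{q^2}$. Since $R[m+1+k]^G \subseteq R[m+1+k]^H$, Proposition~\ref{prop:minimaldegrees}(2) forces every $G$-invariant in $R[m+1+k]$ to have $x_{m+1+k}$-degree at least $q^{2(m-k)+1}$.

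This bound is attained by $\psi_{m-k}(h_1)$. Proposition~\ref{prop:degreesphijOmegaGammaDelta}(3), applied with $s = 1$ and $l = m-k$, gives that $\psi_{m-k}(h_1) = \psi_{m-k}(\Lambda_{1,1})$ has degree exactly $q^{2(m-k)+1}$ in $x_{n-l} = x_{m+1+k}$; invariance follows from Lemma~\ref{le:hkinvGUodd} together with Proposition~\ref{prop:psil}(4). Setting $\phi_{m+1+k} := \psi_{m-k}(h_1)$, Theorem~\ref{the:invfieldgen} delivers
\[\mathbb{F}_{q^2}(V)^G = \mathbb{F}_{q^2}\bigl(x_1, N(x_2), \ldots, N(x_{m+1}), h_1, \psi_1(h_1), \ldots, \psi_{m-1}(h_1)\bigr).\]

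To finish, Lemma~\ref{cor:psilOmegaGammaDeltaalgebramemb}(3) (with the convention $\Lambda_{0,1} = 0$) combined with Proposition~\ref{prop:psil}(2) (which identifies $\psi_{i-1}(x_i) = N(x_i)$) shows that
\[\psi_{m-k}(h_1) \in \mathbb{F}_{q^2}[x_1, N(x_2), \ldots, N(x_{m-k}), h_1, \ldots, h_{m-k+1}].\]
A descending induction on $k$ then rewrites each $\psi_l(h_1)$ polynomially in $x_1, N(x_2), \ldots, N(x_{m+1}), h_1, \ldots, h_m$; the reverse direction is routine since, comparing $x_{n-j+1}$-degrees of the right-hand side, $\psi_{j-1}(h_1)$ has genuine positive degree in $h_j$ and can be solved for $h_j$ in the quotient field. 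I do not expect a serious obstacle here: the only point requiring care is the parameter shift $t = m$, $d = 1$ (versus $t = m-1$, $d = 2$ for $GU(2m,q^2)$), which raises the minimal-degree bound from $q^{2(m-k)-1}$ to $q^{2(m-k)+1}$ and adds precisely one additional Steenrod image $h_m$ to the final list of generators.
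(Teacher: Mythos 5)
Your proposal is correct and follows essentially the same route as the paper: identify the action on $R[m+1]$ as the full unitriangular group to take $\phi_j=N(x_j)$, use the abelian subgroup $H^-$ with $t=m$, $d=1$ together with Proposition \ref{prop:minimaldegrees} and Proposition \ref{prop:degreesphijOmegaGammaDelta} to take $\phi_{m+1+k}=\psi_{m-k}(h_1)$, and then rewrite via Lemma \ref{cor:psilOmegaGammaDeltaalgebramemb}. The only cosmetic difference is your closing "solve for $h_j$" argument, which is unnecessary since the $h_j$ are already $G$-invariant by Lemma \ref{le:hkinvGUodd}, so the inclusion $\mathbb{F}_{q^2}(x_1,N(x_2),\ldots,N(x_{m+1}),h_1,\ldots,h_m)\subseteq\mathbb{F}_{q^2}(V)^G$ is immediate.
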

\begin{proof}
In this case $G$ acts on $R[m+1]$ like the group $U(m+1,q^2)$. Therefore, for each $j\in\{1,\ldots,m+1\}$, the degree in $x_j$ of $N(x_j)\in R[j]^G$ is minimal and we can take $\phi_j=N(x_j)$.
If we consider the elements of $G$ for which $A$ is the identity matrix and $v$ is the zero vector, then we obtain an abelian subgroup $H$.
Similarly to what was done in the proof of Theorem \ref{the:invariantfieldGUeven}, we can show that $H$ is the group $H^-$ with $t=m$ and $d=1$.
Let $k\in\{0,\ldots,m\}$. Since
$R[m+1+k]^G\subset R[m+1+k]^{H^-}$
and using Proposition \ref{prop:minimaldegrees}, we can see that the minimal degree in $x_{m+1+k}$ of a polynomial in $R[m+1+k]^G$ is greater than or equal to $q^{2(m-k)+1}$. By Proposition \ref{prop:degreesphijOmegaGammaDelta}, this is the $x_{m+1+k}$-degree of
$$\psi_{m-k}(\Lambda_{1,1})=\psi_{m-k}(h_1)\in R[m+1+k].$$
According to Lemma \ref{le:hkinvGUodd} and Proposition \ref{prop:psil}-4, we have that $\psi_{m-k}(h_1)$ is invariant for $G$  and therefore we can take $\phi_{m+1+k}=\psi_{m-k}(h_1)$.
Now, from Lemma \ref{cor:psilOmegaGammaDeltaalgebramemb}, Proposition \ref{prop:psil}-2, it follows that for $k<m$, 
$\psi_{m-k}(h_1)\in \mathbb{F}_{q^2}[x_1,N(x_2),\ldots,N(x_{m-k}),h_1,h_2,\ldots,h_{m+1-k}].$
Finally, applying Theorem \ref{the:invfieldgen} we conclude that
$$\mathbb{F}_{q^2}(V)^G=\mathbb{F}_{q^2}(x_1,N(x_2),\ldots,N(x_{m+1}),h_{m},\ldots,h_{1})$$
and this finishes the proof.
\end{proof}

\subsection{The Invariant Field of a Sylow $p$-subgroup of $Sp(2m,q)$}
Here $\mathbb{F}=\mathbb{F}_q$ and $n=2m$.
Now, for each $k\geq 1$ let $h_k:=\Omega_{k,-1}.$
Thus $h_1=\sum_{i=1}^m(x_{2m-i+1}^{q}x_i-x_{2m-i+1}x_i^{q})$.
\begin{lemma}\label{hkinvariantSp}
For all $k\geq 1$ the polynomials $h_k$ belong to $\mathbb{F}_{q}[V]^{Sp(2m,q)}$.
\end{lemma}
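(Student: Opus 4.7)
The plan is to follow the same two-stage strategy as in the proofs of Lemmas \ref{le:hkinvariantsGUeven} and \ref{le:hkinvGUodd}: reduce the general $h_k$ to $h_1$ by a Steenrod operation, then check invariance of $h_1$ by direct linear-algebra computation.

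First I would dispose of $k \geq 2$ by appealing to Lemma \ref{cor:StOpOmegaGammaLamba}, part (3): with $r = q$ and $j = -1$, it gives $\mathcal{P}^{q^{k-1}}(\Omega_{k-1,-1}) = \Omega_{k,-1}$ for $k \geq 2$, i.e.\ $h_k = \mathcal{P}^{q^{k-1}}(h_{k-1})$. Since the Steenrod operations commute with the natural $GL(V)$-action on $\mathbb{F}_q[V]$, any $GL(V)$-stable subspace (in particular $\mathbb{F}_q[V]^{Sp(2m,q)}$) is preserved by each $\mathcal{P}^i$. Thus an inductive argument on $k$ shows it suffices to verify $h_1 \in \mathbb{F}_q[V]^{Sp(2m,q)}$.

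For $h_1$ I would switch from polynomials to polynomial functions. Let $X$ be the $2m\times 2m$ block matrix from Lemma \ref{le:sylowGUandSp}(3) (with $X_1 = J_{m-1}$, $X_2 = \bigl(\begin{smallmatrix} 0 & 1 \\ -1 & 0\end{smallmatrix}\bigr)$, $\epsilon = -$), so that $X$ has support only on the anti-diagonal with entries $+1$ in the upper half and $-1$ in the lower half. A direct expansion (splitting the sum $\sum_{i+j=2m+1}$ into $i\le m$ and $i>m$) gives
\[
v^T X v^{(q)} \;=\; \sum_{i=1}^m \bigl(\alpha_i \alpha_{2m-i+1}^q - \alpha_{2m-i+1} \alpha_i^q\bigr) \;=\; h_1(v),
\]
where $v = (\alpha_1,\ldots,\alpha_{2m})^T \in \bar{\mathbb{F}}_q^{2m}$ and $v^{(q)}$ denotes entry-wise $q$-power. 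Now for $M \in Sp(2m,q)$, the defining relation is $M^T X M = X$ (the condition $N^T X \bar N = X$ of Section \ref{sec:sylowdescription} reduces to this because $M$ has entries in $\mathbb{F}_q$, so $\bar M = M$); equivalently $(M^{-1})^T X M^{-1} = X$. Since $M$ is defined over $\mathbb{F}_q$, Frobenius commutes with the action of $M^{-1}$, i.e.\ $(M^{-1}v)^{(q)} = M^{-1} v^{(q)}$. Therefore
\[
(M \cdot h_1)(v) \;=\; h_1(M^{-1}v) \;=\; (M^{-1}v)^T X (M^{-1}v)^{(q)} \;=\; v^T (M^{-1})^T X M^{-1} v^{(q)} \;=\; v^T X v^{(q)} \;=\; h_1(v),
\]
so $h_1 \in \mathbb{F}_q[V]^{Sp(2m,q)}$.

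Neither step presents a real obstacle: the Steenrod reduction is immediate from the already-proved Lemma \ref{cor:StOpOmegaGammaLamba}, and the invariance of $h_1$ is a one-line matrix computation once one recognises $h_1(v)$ as the pairing of $v$ against $v^{(q)}$ via the symplectic form $X$. The only ``care'' point is to remember that $Sp(2m,q)$ is defined over $\mathbb{F}_q$ (so Frobenius commutes with $M$), a fact that is not available for the unitary groups in the preceding lemmas where the analogous computation used $\bar M \neq M$ in a genuine way.
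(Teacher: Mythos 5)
Your proof is correct and follows essentially the same route as the paper: reduce to $h_1$ via $h_k=\mathcal{P}^{q^{k-1}}(h_{k-1})$ from Lemma \ref{cor:StOpOmegaGammaLamba} together with the fact that Steenrod operations preserve $\mathbb{F}_q[V]^{Sp(2m,q)}$, and then verify invariance of $h_1$ by the matrix computation with the alternating form, exactly as the paper does by mimicking the proof of Lemma \ref{le:hkinvariantsGUeven} with $v^{(q)}$ in place of $\bar v$. Your explicit remark that Frobenius commutes with matrices defined over $\mathbb{F}_q$ is precisely the detail the paper leaves implicit.
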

\begin{proof}
By Lemma \ref{cor:StOpOmegaGammaLamba}, $h_k=\mathcal{P}^{q^{k-1}}(h_{k-1})$ for $k>1$ and so it is enough to prove that $h_1$ is an invariant polynomial, which is done in the same way as
in the proof of Lemma \ref{le:hkinvariantsGUeven}.
\end{proof}

\begin{theorem}\label{the:invfieldsSp}
Let $G$ be the Sylow $p$-subgroup of $Sp(2m,q)$ given by Lemma \ref{le:sylowGUandSp}.
The invariant field $\mathbb{F}_{q}(V)^G$ is generated by the polynomials $N(x_i)$, with $i=1,\ldots,m+1$, and the polynomials $h_k$, with $k=1,\ldots,m-1$, i.e.,
\[
\mathbb{F}_{q}(V)^G=\mathbb{F}_{q}(x_1,N(x_2),\ldots,N(x_{m+1}),h_1,\ldots,h_{m-1}).
\]
\end{theorem}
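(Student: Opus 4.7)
The plan is to apply the Campbell--Chuai--Kang algorithm of Theorem~\ref{the:invfieldgen}, mirroring the proofs of Theorems~\ref{the:invariantfieldGUeven} and~\ref{the:invariantfieldGUodd}. I will exhibit, for each $j=1,\dots,2m$, an invariant $\phi_j\in R[j]^G$ whose degree in $x_j$ is minimal among $G$-invariants in $R[j]$, and then collapse the resulting generating set to $\{x_1,N(x_2),\dots,N(x_{m+1}),h_1,\dots,h_{m-1}\}$ by means of Lemma~\ref{cor:psilOmegaGammaDeltaalgebramemb}. For the first $m+1$ indices I would read off the restriction of $G$ to $R[m+1]$: by Lemma~\ref{le:sylowGUandSp}(3), the upper-left $(m+1)\times(m+1)$ block of a generic element of $G$ is $\bigl(\begin{smallmatrix}A&0\\ B&F\end{smallmatrix}\bigr)$ with $A\in U(m-1,q)$, $B$ arbitrary, and $F=\bigl(\begin{smallmatrix}1&0\\ c&1\end{smallmatrix}\bigr)$. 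A direct check shows that $F^TX_2F=X_2$ for every $c\in\mathbb{F}_q$, so the restricted action is the full group $U(m+1,q)$. The classical Dickson-type computation recalled in the introduction then yields $R[j]^G=\mathbb{F}_q[x_1,N(x_2),\dots,N(x_j)]$ for $j\le m+1$, and I take $\phi_j=N(x_j)$.

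For the remaining indices $j=m+1+k$ with $k=1,\dots,m-1$, I would isolate the abelian subgroup $H\le G$ obtained by setting $A=I$, $F=I$ and $B=0$ in~(\ref{syst:solution}). Since $\epsilon=-$ and Frobenius is trivial on $\mathbb{F}_q$, the constraint $S-S^T=-B^TX_2\bar B$ degenerates to $S=S^T$; writing $C=J_{m-1}S$ then gives $c_{ij}=s_{m-i,j}=s_{j,m-i}=c_{m-j,m-i}$, which identifies $H$ with $H^+$ of Proposition~\ref{prop:minimaldegrees} at $t=m-1$, $d=2$ (in contrast with the unitary cases, where the hermitian symmetry forces $H=H^-$). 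Since $R[m+1+k]^G\subseteq R[m+1+k]^{H^+}$, Proposition~\ref{prop:minimaldegrees}(1) bounds the $x_{m+1+k}$-degree of any $G$-invariant in $R[m+1+k]$ below by $q^{(m-1)-k+1}=q^{m-k}$. To realise this bound I would take $\phi_{m+1+k}=\psi_{m-1-k}(h_1)$: Proposition~\ref{prop:psil}(4) together with Lemma~\ref{hkinvariantSp} shows that this polynomial is $G$-invariant (since $G\le U(2m,q)$ and $\psi_l$ commutes with such actions), while Proposition~\ref{prop:degreesphijOmegaGammaDelta}(2) with $r=q$, $s=1$ and $l=m-1-k$ gives its $x_{m+1+k}$-degree as exactly $q^{m-k}$.

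Theorem~\ref{the:invfieldgen} then supplies
\[
\mathbb{F}_q(V)^G=\mathbb{F}_q\bigl(x_1,N(x_2),\dots,N(x_{m+1}),\psi_{m-2}(h_1),\dots,\psi_1(h_1),h_1\bigr),
\]
and Lemma~\ref{cor:psilOmegaGammaDeltaalgebramemb}(1) applied to $\Omega_{1,-1}=h_1$ (recalling $\Omega_{0,-1}=0$) writes each $\psi_l(h_1)$ as a polynomial in $x_1,N(x_2),\dots,N(x_l),h_1,\dots,h_{l+1}$, reducing the generating set to the one claimed. The step I expect to be most delicate is the identification $H=H^+$: the sign flip between $\epsilon=+$ (unitary) and $\epsilon=-$ (symplectic), compounded with the triviality of Frobenius over $\mathbb{F}_q$, is exactly what swaps the relevant abelian subgroup from $H^-$ (as in the unitary proofs) to $H^+$, and it is this swap that produces the precise matching between the lower bound $q^{m-k}$ and the $x_{m+1+k}$-degree of $\psi_{m-1-k}(h_1)$.
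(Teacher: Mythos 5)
Your proposal is correct and follows essentially the same route as the paper's proof: identify the action on $R[m+1]$ with $U(m+1,q)$ to take $\phi_j=N(x_j)$, identify the abelian subgroup (with $A=I$, $F=I$, $B=0$) with $H^+$ at $t=m-1$, $d=2$ to get the lower bound $q^{m-k}$, match it with $\psi_{m-1-k}(h_1)$ via Propositions \ref{prop:psil}, \ref{prop:degreesphijOmegaGammaDelta} and Lemma \ref{hkinvariantSp}, and collapse via Lemma \ref{cor:psilOmegaGammaDeltaalgebramemb} and Theorem \ref{the:invfieldgen}. The observation about the $H^-$ versus $H^+$ swap relative to the unitary cases is exactly the point the paper exploits as well.
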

\begin{proof}
By choosing the elements of $G$ for which $A$ and $F$ are the identity matrices and $B$ is the zero matrix, we obtain an abelian subgroup $H$ of $G$ with elements
$$
\left(
\begin{tabular}[h]{c|c|c}
$I_{m-1}$ &$0$ & $0$ \\[-0.4 cm]
& \\
\hline
& \\[-0.4 cm]
$0$ &$I_2$ & $0$
\\[-0.4 cm]
& \\
\hline
& \\[-0.4 cm]
$J_{m-1}S$ &$0$ &$I_{m-1}$
\end{tabular}
\right)
$$
where $S\in M(m-1,q)$ is such that $S-S^T=0$.
It is easy to check that if $C=J_{m-1}S$ then $c_{i,j}=c_{m-j,m-i}$. Also if $C$ is any matrix with entries in $\mathbb{F}_{q}$ satisfying $c_{i,j}=c_{m-j,m-i}$ then $S=J_{m-1}C$ satisfies $S-S^T=0$. Hence $H$ is the group $H^+$ with $t=m-1$ and $d=2$.
Now, let $k\in\{1,\ldots,m-1\}$. Since
$R[m-1-k]^G\subset R[m-1-k]^{H^+}$
the minimal degree in $x_{m+1+k}$ of a polynomial in $R[m+1+k]^G$ is, according to Proposition \ref{prop:minimaldegrees}, greater than or equal to $q^{m-k}$. We know from Proposition \ref{prop:degreesphijOmegaGammaDelta} that $q^{m-k}$ is actually the degree of
$\psi_{m-1-k}(\Omega_{1,-1})=\psi_{m-1-k}(h_1).$
We know that $\psi_l(h_1)$ is an invariant polynomial by Proposition \ref{prop:psil}-4 and Lemma \ref{hkinvariantSp} and
therefore, we take $\phi_{m+1+k}=\psi_{m-1-k}(h_1)$.
Also, it follows from Lemma \ref{cor:psilOmegaGammaDeltaalgebramemb} and Proposition \ref{prop:psil}-2 that for $k<m-1$
$$\psi_{m-1-k}(h_1)\in \mathbb{F}_q[x_1,N(x_2),\ldots,N(x_{m-1-k}),h_1,h_2,\ldots,h_{m-k}].$$
Finally for each $j\in\{1,\ldots,m+1\}$, we compute the polynomial $\phi_j$. We note that $G$ is acting on $R[m+1]$ in the same way as is the group $U(m+1,\mathbb{F}_{q})$. Hence $R[j]^G=R[j]^{U(m+1,\mathbb{F}_{q})}$ and therefore we can choose $\phi_j=N(x_j)$. Applying Theorem \ref{the:invfieldgen} we conclude that
$$\mathbb{F}_{q}(V)^G=\mathbb{F}_{q}(x_1,N(x_2),\ldots,N(x_{m+1}),h_{m-1},\ldots,h_{1})$$
and this finishes the proof.
\end{proof}

\subsection{The Invariant Field of a Sylow $p$-subgroup of $O^+(2m,q)$}
Let again $V=\mathbb{F}_q^n$ with $n=2m$. The orthogonal group $O^+(2m,q)$ is the group of invertible matrices that preserve the quadratic form
$$Q(v)=\sum_{i=1}^{m}\alpha_{2m-i+1}\alpha_i$$
with $v=\sum_{i=1}^m(\alpha_iu_i+\alpha_{2m-i+1}v_i)$ (see \cite{Taylor92}).
Now consider the following family of polynomials: for $k\geq 1$ define $h_k:=\Omega_{k-1,1}.$
In particular, $h_1=\sum_{i=1}^mx_{2m-i+1}x_i$.
The next lemma shows that $h_k$ is invariant under the action of $O^+(2m,q)$ for all $k$.

\begin{lemma}\label{le:hkinvO+}
For all $k\geq 1$ the polynomials $h_k$ belong to $\mathbb{F}_{q}[V]^{O^+(2m,q)}$.
\end{lemma}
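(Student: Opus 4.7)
The strategy is the same as in the proofs of Lemmas \ref{le:hkinvariantsGUeven}, \ref{le:hkinvGUodd} and \ref{hkinvariantSp}: reduce the whole family $\{h_k\}_{k\ge 1}$ to a single base case $h_1$ by means of Steenrod operations, and then observe that $h_1$ is invariant because it is (a scalar multiple of) the very form that defines $O^+(2m,q)$.

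First, I would handle the reduction step. Since $\mathbb{F}=\mathbb{F}_q$ we have $r=q$, and $h_k=\Omega_{k-1,1}$. Item (1) of Lemma \ref{cor:StOpOmegaGammaLamba} gives $\mathcal{P}^{1}(\Omega_{0,1})=\Omega_{1,1}$, that is, $h_2=\mathcal{P}^{1}(h_1)$. Item (3) of the same lemma, applied with $s=k-1\ge 1$, then yields
\[
h_{k+1}=\Omega_{k,1}=\mathcal{P}^{q^{k-1}}(\Omega_{k-1,1})=\mathcal{P}^{q^{k-1}}(h_k)\qquad (k\ge 2).
\]
Since the Steenrod algebra $\mathcal{A}_q$ commutes with the natural action of $\mathrm{GL}(V)$ on $\mathbb{F}_q[V]$ (this was noted in the introduction), once $h_1$ is shown to be $O^+(2m,q)$-invariant the whole family $h_2,h_3,\dots$ is automatically invariant.

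Second, I would verify the base case $h_1\in\mathbb{F}_q[V]^{O^+(2m,q)}$. By definition $h_1=\sum_{i=1}^{m} x_{2m-i+1}x_i$, which on a vector $v=(\alpha_1,\ldots,\alpha_{2m})^T$ takes the value $\sum_{i=1}^{m}\alpha_{2m-i+1}\alpha_i=Q(v)$, precisely the quadratic form used to define $O^+(2m,q)$. Therefore, exactly as in the computation carried out in the proof of Lemma \ref{le:hkinvariantsGUeven} (replacing the hermitian pairing $v^T J_{2m}\bar v$ by the quadratic form $Q$), for any $M\in O^+(2m,q)$ we get $(M\cdot h_1)(v)=h_1(M^{-1}v)=Q(M^{-1}v)=Q(v)=h_1(v)$, so $M\cdot h_1=h_1$. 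This works uniformly in both odd and even characteristic: in even characteristic $O^+(2m,q)$ is defined to preserve $Q$ directly, while in odd characteristic $h_1=\tfrac12 B(v,v)$ for the associated symmetric bilinear form $B$, which is again preserved.

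I expect no serious obstacle here; the only point that deserves a word of care is making sure the Steenrod recursion of Lemma \ref{cor:StOpOmegaGammaLamba} is applied with the correct index shift (the bookkeeping $h_k=\Omega_{k-1,1}$ versus the $h_k=\Omega_{k,-1}$ used in the symplectic case), and that the characteristic-even case of $O^+(2m,q)$ is covered by the same argument.
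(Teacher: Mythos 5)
Your proposal is correct and follows essentially the same route as the paper: reduce the family $\{h_k\}$ to $h_1$ via the Steenrod recursion of Lemma \ref{cor:StOpOmegaGammaLamba} (using that $\mathcal{A}_q$ commutes with the ${\rm GL}(V)$-action), and then observe that $h_1$ is the defining quadratic form of $O^+(2m,q)$, hence invariant by definition. Your index bookkeeping $h_k=\Omega_{k-1,1}$, $h_k=\mathcal{P}^{q^{k-2}}(h_{k-1})$ matches the paper's, and the extra aside about odd versus even characteristic is harmless but unnecessary, since the paper defines $O^+(2m,q)$ as the stabilizer of $Q$ in every characteristic.
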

\begin{proof}
We know from Lemma \ref{cor:StOpOmegaGammaLamba} that for $k>1$, $h_k$ is the $q^{k-2}$-th Steenrod operation of $h_{k-1}$ and therefore we just have to show that $h_1$ is invariant. This follows directly from the definition of the group $O^+(2m,q)$.
\end{proof}

We have to consider separately the cases when the characteristic of $\mathbb{F}_{q}$ is $2$ and when it is not.


\begin{theorem}\label{the:invfieldO+odd}
Let $q$ be odd and let $G$ be the Sylow $p$-subgroup of $O^+(2m,q)$ given by Lemma \ref{le:sylowOrthogonaloddq}. The invariant field $\mathbb{F}_{q}(V)^G$ is generated by the polynomials $N(x_i)$, with $i=1,\ldots,m+1$, and the polynomials $h_k$, with $k=1,\ldots,m-1$, i.e.,
\[
\mathbb{F}_{q}(V)^G=\mathbb{F}_{q}(x_1,N(x_2),\ldots,N(x_{m+1}),h_1,\ldots,h_{m-1}).
\]
\end{theorem}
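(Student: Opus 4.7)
The plan is to imitate the proof of Theorem \ref{the:invfieldsSp} and apply Theorem \ref{the:invfieldgen} with $\phi_j:=N(x_j)$ for $j\le m+1$ and $\phi_{m+1+k}:=\psi_{m-1-k}(h_1)$ for $k=1,\dots,m-1$. The only genuinely new computation, compared with the symplectic case, occurs in the middle $2\times 2$ block $F$ of a general element of $G=\mathfrak{G}^+_{J_{m-1},J_2}$: here $F$ must satisfy $F^TJ_2F=J_2$, so writing $F=\bigl(\begin{smallmatrix}1&0\\c&1\end{smallmatrix}\bigr)$ and expanding yields $2c=0$, which forces $F=I_2$ since $q$ is odd. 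I would record this and immediately conclude that $G$ acts on $R[m+1]$ exactly as the group $U_1$ of Lemma \ref{le:phijingeneral} (with $n$ replaced by $m+1$), so that $R[j]^G=\mathbb{F}_q[x_1,N(x_2),\dots,N(x_j)]$ and $\phi_j=N(x_j)$ is the required minimal-$x_j$-degree choice for each $j\le m+1$.

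To bound the $x_{m+1+k}$-degree of $\phi_{m+1+k}$ from below I would cut down to the abelian subgroup $H\le G$ obtained by imposing $A=I_{m-1}$, $B=0$ and $F=I_2$. The defining relation of Lemma \ref{lem:solutions} then collapses to $S+S^T=0$, and a direct calculation shows that the lower-left block $C=J_{m-1}S$ satisfies $c_{i,j}=-c_{m-j,m-i}$, identifying $H$ with the ``minus'' variant $H^-$ of Proposition \ref{prop:minimaldegrees} with $t=m-1$, $d=2$ over $\mathbb{F}=\mathbb{F}_q$. Since $q$ is odd, part (2) of that proposition yields the bound $\deg_{x_{m+1+k}}f\ge q^{m-1-k}$ for every $f\in R[m+1+k]^G\subseteq R[m+1+k]^{H^-}$.

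This bound is matched exactly by $\psi_{m-1-k}(h_1)=\psi_{m-1-k}(\Omega_{0,1})$: $G$-invariance follows from Proposition \ref{prop:psil}(4) combined with Lemma \ref{le:hkinvO+}, while membership in $R[m+1+k]$ and the precise $x_{m+1+k}$-degree $q^{m-1-k}$ come from Proposition \ref{prop:degreesphijOmegaGammaDelta}(1) with $l=m-1-k$ and $s=0$. Hence the choice $\phi_{m+1+k}:=\psi_{m-1-k}(h_1)$ is legitimate for $k=1,\dots,m-1$. Theorem \ref{the:invfieldgen} then gives
$$\mathbb{F}_q(V)^G=\mathbb{F}_q\bigl(x_1,N(x_2),\dots,N(x_{m+1}),\psi_{m-2}(h_1),\dots,\psi_1(h_1),h_1\bigr),$$
and to conclude I would invoke Lemma \ref{cor:psilOmegaGammaDeltaalgebramemb}(1) together with the identification $\psi_{k-1}(x_k)=N(x_k)$ for $k\le m$ (which suffices since the relevant indices satisfy $l\le m-2$) to rewrite each $\psi_l(h_1)$ as an element of $\mathbb{F}_q[x_1,N(x_2),\dots,N(x_{m-2}),h_1,\dots,h_{m-1}]$, collapsing the generating set to the claimed one.

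No single step is deep; the main point to watch is the sign-and-shift bookkeeping when comparing with the symplectic proof. The sign $\epsilon=+$ in $\mathfrak{G}^+_{J_{m-1},J_2}$ forces $S$ to be skew-symmetric rather than symmetric, swapping $H^+$ for $H^-$ in Proposition \ref{prop:minimaldegrees}, while $h_1=\Omega_{0,1}$ has $s=0$ instead of $s=1$ in Proposition \ref{prop:degreesphijOmegaGammaDelta}. These two shifts conspire to produce exactly the bound $q^{m-1-k}$ in place of the $q^{m-k}$ of the symplectic case, so upper and lower degree estimates again match perfectly and the argument carries through.
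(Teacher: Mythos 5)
Your proposal is correct and follows essentially the same route as the paper's proof: the same choices $\phi_j=N(x_j)$ for $j\le m+1$ via Lemma \ref{le:phijingeneral}, the same abelian subgroup $H=H^-$ (with $t=m-1$, $d=2$) feeding Proposition \ref{prop:minimaldegrees} to give the lower bound $q^{m-1-k}$, matched by $\psi_{m-1-k}(h_1)$ via Propositions \ref{prop:degreesphijOmegaGammaDelta}, \ref{prop:psil} and Lemmas \ref{le:hkinvO+}, \ref{cor:psilOmegaGammaDeltaalgebramemb}, and finally Theorem \ref{the:invfieldgen}. The only difference is that you make explicit a few steps the paper leaves implicit (the computation forcing $F=I_2$ when $q$ is odd, and the verification that the relevant indices allow the identification $\psi_{l-1}(x_l)=N(x_l)$), which is harmless.
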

\begin{proof}
First let us consider the abelian subgroup $H$ of $G$ obtained by taking the elements of $G$ for which the matrices $A$ and $B$ are equal to the identity and the zero matrix, respectively. Analogously to
the proof of Theorem \ref{the:invariantfieldGUeven}, we can easily show that $H=H^-$ with $t=m-1$ and $d=2$.
Let $k\in\{1,\ldots,m-1\}$. We proceed analogously to the proof of Theorem \ref{the:invfieldsSp}. Note that
$R[m+1+k]^G\subset R[m+1+k]^{H^-}$
and therefore it follows from Proposition \ref{prop:minimaldegrees} that the minimal degree in $x_{m+1+k}$ of a polynomial in $R[m+1+k]^G$ is greater than or equal to $q^{m-1-k}$.
According to Proposition \ref{prop:degreesphijOmegaGammaDelta} this is the $x_{m+1+k}$-degree of
$$\psi_{m-1-k}(\Omega_{0,1})=\psi_{m-1-k}(h_1).$$
Now, $\psi_{m-1-k}(h_1)$ is invariant by Lemma \ref{le:hkinvO+} and Proposition \ref{prop:psil}-4. 
Hence, we can take $\phi_{m+1+k}=\psi_{m-1-k}(h_1)$.
Applying Lemma \ref{cor:psilOmegaGammaDeltaalgebramemb} and Proposition \ref{prop:psil}-2 we see that for $k<m-1$,
$$\psi_{m-1-k}(h_1)\in \mathbb{F}_q[x_1,N(x_2),\ldots,N(x_{m-1-k}),h_1,h_2,\ldots,h_{m-k}].$$

Now we determine for each $j\in\{1,\ldots,m+1\}$, the polynomial $\phi_j$. By looking at how $G$ acts on  $R[m+1]$ we can see it is acting in the same way as the group $U_1$ in Lemma \ref{le:phijingeneral}. Hence we can choose $\phi_j=N(x_j)$ for  $j\in\{1,\ldots,m+1\}$. Applying Theorem \ref{the:invfieldgen} we conclude that
$$\mathbb{F}_{q}(V)^G=\mathbb{F}_{q}(x_1,N(x_2),\ldots,N(x_{m+1}),h_{m-1},\ldots,h_{1})$$
which finishes the proof.
\end{proof}

Finally, we assume that the characteristic of $\mathbb{F}_{q}$ is $2$. Consider the subgroup $G_1$ of $O^+(2m,q)$ given in Lemma \ref{le:sylowOrthogonalevenq}.

\begin{lemma}\label{le:invfieldO+evenG1}
The invariant field for $G_1$ is generated by the polynomials $N(x_i)$, with $i=1,\ldots,m+1$, and the polynomials $h_k$, with $k=1,\ldots,m-1$, i.e.,
\[
\mathbb{F}_{q}(V)^{G_1}=\mathbb{F}_{q}(x_1,N(x_2),\ldots,N(x_{m+1}),h_1,\ldots,h_{m-1}).
\]
\end{lemma}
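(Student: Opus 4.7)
The plan is to apply Theorem~\ref{the:invfieldgen} by choosing, for each $j\in\{1,\ldots,2m\}$, an invariant $\phi_j\in R[j]^{G_1}$ of minimal positive $x_j$-degree, and then matching these $\phi_j$ to the generators listed in the statement.

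For $j\le m+1$ I would first read off the action of $G_1$ on $R[m+1]$. Since $q$ is even the $2\times 2$ block $F$ in the matrix description of $\mathfrak G^{+}_{J_{m-1},J_2}$ is the identity, so an element of $G_1$ acts on $x_1,\ldots,x_{m-1}$ by an arbitrary lower-uni-triangular transformation (from $A$), on $x_m$ by adding an arbitrary $\mathbb{F}_q$-combination of $x_1,\ldots,x_{m-1}$ (first row of $B$), and on $x_{m+1}$ similarly, with no $x_m$ contribution (second row of $B$, since $F=I_2$). This is precisely the action of the group $U_1$ of Lemma~\ref{le:phijingeneral} inside $U(m+1,q)$, so by that lemma I may take $\phi_j:=N(x_j)$ for $j=1,\ldots,m+1$.

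For $j=m+1+k$ with $k\in\{1,\ldots,m-1\}$ I would introduce the abelian subgroup $H\le G_1$ formed by the elements with $A=I_{m-1}$ and $B=0$. The relations $S+S^{T}=B^{T}J_2 B$ and $s_{ii}=b_{1i}b_{2i}$ from Lemma~\ref{le:sylowOrthogonalevenq}, specialised to $B=0$ in characteristic~$2$, collapse to $S=S^{T}$ and $s_{ii}=0$. Setting $C=J_{m-1}S$, a direct entry-by-entry check gives $c_{i,j}=c_{m-j,m-i}$ and $c_{i,m-i}=s_{m-i,m-i}=0$, so $H$ sits inside the group $H^{+}=H^{-}$ with $t=m-1$ and $d=2$, and moreover meets the zero-antidiagonal hypothesis of Remark~\ref{re:H+-diagonalzero}. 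That remark then supplies the lower bound $q^{m-1-k}$ for the $x_{m+1+k}$-degree of every polynomial in $R[m+1+k]^{G_1}\subseteq R[m+1+k]^{H}$.

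To saturate this bound I would choose $\phi_{m+1+k}:=\psi_{m-1-k}(h_1)$, whose $x_{m+1+k}$-degree is exactly $q^{m-1-k}$ by Proposition~\ref{prop:degreesphijOmegaGammaDelta}(1), and which is $G_1$-invariant by Lemma~\ref{le:hkinvO+} combined with Proposition~\ref{prop:psil}(4). Lemma~\ref{cor:psilOmegaGammaDeltaalgebramemb}(1) then writes $\psi_{m-1-k}(h_1)$ in terms of $x_1,\psi_1(x_2),\ldots,\psi_{m-2-k}(x_{m-1-k})$ and $h_1,\ldots,h_{m-k}$; each $\psi_i(x_{i+1})$ with $i+1\le m-1-k$ already coincides with the $G_1$-orbit product $N(x_{i+1})$, because the actions of $G_1$ and of $U(n,\mathbb{F}_q)$ on $\langle x_1,\ldots,x_{m-1}\rangle$ agree. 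Feeding everything into Theorem~\ref{the:invfieldgen} yields the desired presentation of $\mathbb{F}_q(V)^{G_1}$. The main subtlety is the extra $O^{+}$-constraint $s_{ii}=b_{1i}b_{2i}$: without it, the bare Proposition~\ref{prop:minimaldegrees}(2) in characteristic~$2$ only produces the bound $q^{m-k}$, one power too large to be achieved by $\psi_{m-1-k}(h_1)$, so the crucial point is to observe that combining $B=0$ with this constraint forces the zero-antidiagonal condition needed to invoke Remark~\ref{re:H+-diagonalzero}.
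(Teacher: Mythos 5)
Your proposal is correct and follows essentially the same route as the paper: the same choice $\phi_j=N(x_j)$ for $j\le m+1$ via Lemma \ref{le:phijingeneral}, the same abelian subgroup ($A=I$, $B=0$) whose matrices $C=J_{m-1}S$ have zero antidiagonal because $s_{ii}=b_{1i}b_{2i}=0$, the same appeal to Remark \ref{re:H+-diagonalzero} in place of Proposition \ref{prop:minimaldegrees} to get the sharp bound $q^{m-1-k}$, saturated by $\psi_{m-1-k}(h_1)$, and the same use of Lemma \ref{cor:psilOmegaGammaDeltaalgebramemb} and Theorem \ref{the:invfieldgen}. One tiny slip: in even characteristic the condition $F^TJ_2F=J_2$ does \emph{not} force $F=I_2$ inside $\mathfrak G^{+}_{J_{m-1},J_2}$ (it is vacuous there); rather $F=I_2$ is forced by the requirement of preserving the quadratic form, i.e.\ by membership in $G_1$, which is the fact you actually use.
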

\begin{proof}
First we would like to note that in the proof of Theorem \ref{the:invfieldO+odd} the only time we made use of the characteristic of $\mathbb{F}_{q}$ was when we applied Proposition \ref{prop:minimaldegrees}.
If we consider the elements of $G_1$ with $A$ equal to the identity matrix and $B$ the zero matrix, then we obtain an abelian subgroup $H_1$ with elements
$$
\left(
\begin{tabular}[h]{c|c|c}
$I_{m-1}$ &$0$ & $0$ \\[-0.4 cm]
& \\
\hline
& \\[-0.4 cm]
$0$ &$I_2$ & $0$
\\[-0.4 cm]
& \\
\hline
& \\[-0.4 cm]
$J_{m-1}S$ &$0$ &$I_{m-1}$
\end{tabular}
\right)
$$
where $S\in M(m-1,q)$ is such that $S+S^T=0$ and $s_{ii}=0$. Hence for $C=J_{m-1}S$ we also have $c_{i,m-i}=0$.
Now, we can use Remark \ref{re:H+-diagonalzero} instead of Proposition \ref{prop:minimaldegrees} to obtain the same conclusion as in the proof of Theorem \ref{the:invfieldO+odd} about the minimal degrees in $x_{m+1+k}$.
The rest of the proof is similar to the one of Theorem \ref{the:invfieldO+odd}.
\end{proof}

\begin{theorem}\label{the:invfieldO+even}
Let $G$ be the Sylow $p$-subgroup of $O^+(2m,q)$, with $q$ even, given by Lemma \ref{le:sylowOrthogonalevenq}. Then $\mathbb{F}_{q}(V)^{G}$ is generated by the polynomials
\[
x_1,N(x_2),\ldots,N(x_{m-1}),N(x_{m})+N(x_{m+1}),N(x_{m})N(x_{m+1}),h_1,\ldots,h_{m-1}.
\]
\end{theorem}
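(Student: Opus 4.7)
The plan is to reduce to Lemma \ref{le:invfieldO+evenG1} via Galois theory applied to the index-two inclusion $G_1\le G$. By Lemma \ref{le:sylowOrthogonalevenq}, $G=\langle G_1,L\rangle$ with $L$ an involution normalizing $G_1$; and since $L\in O^+(2m,q)\setminus U(2m,q)$ (as $L$ has a $1$ above the diagonal) while $G_1\le U(2m,q)$, we have $L\notin G_1$, so $[G:G_1]=2$. Galois theory then yields
$$\mathbb{F}_q(V)^G \;=\; \bigl(\mathbb{F}_q(V)^{G_1}\bigr)^{\langle L\rangle} \;=\; \mathbb{F}_q\bigl(x_1,N(x_2),\ldots,N(x_{m+1}),h_1,\ldots,h_{m-1}\bigr)^{\langle L\rangle}.$$

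The first substantive step is to compute the $L$-action on this generating set. By the shape of $L$ in (\ref{matrixLL1}), $L$ fixes every $x_i$ with $i\le m-1$ or $i\ge m+2$ and swaps $x_m\leftrightarrow x_{m+1}$. Because $L$ normalizes $G_1$, for every index $i$
$$L\cdot (G_1\cdot x_i) \;=\; (LG_1L^{-1})\cdot(L\cdot x_i) \;=\; G_1\cdot (L\cdot x_i),$$
so $L$ sends orbit products to orbit products: $L\cdot N(x_i)=N(L\cdot x_i)$. Consequently $L$ fixes $x_1, N(x_2),\ldots,N(x_{m-1})$ and interchanges $N(x_m)$ with $N(x_{m+1})$. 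The invariants $h_k=\Omega_{k-1,1}$ are fixed by $L$ because $h_k\in\mathbb{F}_q[V]^{O^+(2m,q)}$ by Lemma \ref{le:hkinvO+} and $L\in O^+(2m,q)$.

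At this point the problem reduces to an elementary Galois computation: $\langle L\rangle$ acts on $\mathbb{F}_q(V)^{G_1}$ as the transposition of two of its generators, fixing all the others. Setting $K:=\mathbb{F}_q(x_1,N(x_2),\ldots,N(x_{m-1}),h_1,\ldots,h_{m-1})$, the extension $\mathbb{F}_q(V)^{G_1}=K\bigl(N(x_m),N(x_{m+1})\bigr)$ is degree two over the subfield $K\bigl(N(x_m)+N(x_{m+1}),\,N(x_m)N(x_{m+1})\bigr)$, since $N(x_m)$ and $N(x_{m+1})$ are roots of the quadratic $t^2-(N(x_m)+N(x_{m+1}))t+N(x_m)N(x_{m+1})$ there. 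This subfield lies in the $\langle L\rangle$-fixed field, which itself has index two, so the two coincide and produce exactly the generating set claimed in the theorem.

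I do not anticipate any serious obstacle. The only point requiring a brief verification is the compatibility $L\cdot N(x_i)=N(L\cdot x_i)$, which uses only that $L$ normalizes $G_1$ and permutes the $G_1$-orbit of $x_i$ bijectively to that of $L\cdot x_i$; the remainder is standard Galois descent combined with the classical $S_2$-invariant calculation on two indeterminates.
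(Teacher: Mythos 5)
Your proposal is correct, and its overall architecture is the same as the paper's: reduce via $[G:G_1]=2$ and Galois theory to computing $\bigl(\mathbb{F}_q(V)^{G_1}\bigr)^{\langle L\rangle}$, invoke Lemma \ref{le:invfieldO+evenG1}, and determine the action of $L$ on the generators (fixing $x_1,N(x_2),\ldots,N(x_{m-1})$ and the $h_k$, swapping $N(x_m)\leftrightarrow N(x_{m+1})$). Where you diverge is the endgame. The paper descends to the ring level: it notes that the generators are algebraically independent, so $R=\mathbb{F}_q[x_1,N(x_2),\ldots,N(x_{m+1}),h_1,\ldots,h_{m-1}]$ is a polynomial ring, identifies the fixed field with $\mathrm{Quot}\bigl(R^{\langle L\rangle}\bigr)$, and quotes the classical result $\mathbb{F}_q[X,Y]^{\Sigma_2}=\mathbb{F}_q[X+Y,XY]$ to compute $R^{\langle L\rangle}$. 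You stay entirely at the field level with a degree sandwich: the subfield $W=K\bigl(N(x_m)+N(x_{m+1}),N(x_m)N(x_{m+1})\bigr)$ is contained in the index-two fixed field, and $E=W(N(x_m))$ has degree at most two over $W$ by the quadratic relation, forcing $W$ to equal the fixed field. Your route is more elementary (no appeal to algebraic independence of the generators, to $\mathrm{Quot}(R^{\langle L\rangle})=\mathrm{Quot}(R)^{\langle L\rangle}$, or to the $\Sigma_2$ invariant-ring theorem), while the paper's route yields the slightly stronger ring-level statement about $R^{\langle L\rangle}$; one small caveat is that your phrase ``is degree two over'' is only justified a posteriori (the quadratic gives degree at most two, and equality follows from the sandwich), but as you arrange the argument this does not matter. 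You also make explicit two points the paper treats as routine: that $L\cdot N(x_i)=N(L\cdot x_i)$ because $L$ normalises $G_1$, and that $L$ fixes the $h_k$ because $L\in O^+(2m,q)$; and your final generating set matches the theorem statement exactly, whereas the displayed list at the end of the paper's proof erroneously retains $N(x_m)$.
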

\begin{proof}
We showed  in the proof of Lemma \ref{le:invfieldO+evenG1} that $L$ normalises $G_1$. Hence $G_1$ is a normal subgroup of $G$ and $G/G_1=<L>$. We have
$$\mathbb{F}_{q}(V)^{G}=(\mathbb{F}_{q}(V)^{G_1})^{<L>}$$
and applying Lemma \ref{le:invfieldO+evenG1} we get
$$\mathbb{F}_{q}(V)^{G}=\mathbb{F}_{q}(x_1,N(x_2),\ldots,N(x_{m+1}),h_1,\ldots,h_{m-1})^{<L>}.$$
It also follows from Lemma \ref{le:invfieldO+evenG1} that
$$R:=\mathbb{F}_{q}[x_1,N(x_2),\ldots,N(x_{m+1}),h_1,\ldots,h_{m-1}]$$
is a polynomial ring, so $\mathbb{F}_{q}(x_1,N(x_2),\ldots,N(x_{m+1}),h_1,\ldots,h_{m-1})^{<L>}$ is the fraction field of $R^{<L>}$.
Now, $<L>$ is a group of order 2 and it is acting on $R$ such that it fixes the elements $x_1$,$N(x_2),\ldots,N(x_{m-1})$,$h_1$,\\
$h_2,\ldots,h_{m-1}$ and swaps $N(x_{m})$ with $N(x_{m+1})$.
It is known that the invariant ring for the symmetric group $\Sigma_2$ acting on $\mathbb{F}_{q}[X,Y]$ by interchanging $X$ with $Y$ is generated by $X+Y$ and $XY$ (see \cite{Smith95} Theorem 1.1.1).
Hence
$\mathbb{F}_{q}(x_1,N(x_2),\ldots,N(x_{m+1}),h_1,\ldots,h_{m-1})^{<L>}$ is generated by
\[x_1,N(x_2),\ldots,N(x_{m}),
N(x_{m})+N(x_{m+1}),N(x_{m})N(x_{m+1}),h_1,\ldots,h_{m-1}\]
and this finishes the proof.
\end{proof}

\subsection{The Invariant Field of a Sylow $p$-subgroup of $O^-(2m+2,q)$}
Here $\mathbb{F}=\mathbb{F}_q$ and $n=2m+2$. In \cite{Taylor92} we can see that the orthogonal group $O^-(2m+2,q)$ is the group of invertible matrices that preserve the quadratic form
$$Q(v)=\sum_{i=1}^{m}\alpha_{2m+2-i+1}\alpha_i+\alpha_{m+1}^2+\alpha_{m+1}\alpha_{m+2}+a\alpha_{m+2}^2,$$
where we chose $a$ such that the polynomial $X^2+X+a$ is irreducible in $\mathbb{F}_{q}[X]$.
Keeping in mind that now $n=2(m+1)$, for $k\geq 1$ define
$$h_k:=\Gamma_{k-1,a}.$$
Thus $h_1=\sum_{i=1}^mx_{2m+2-i+1}x_i+x_{m+1}^2+x_{m+1}x_{m+2}+ax_{m+2}^2$.
We prove that all $h_k$ are invariant under the action of $O^-(2m+2,q)$.

\begin{lemma}\label{le:hkinvO-}
For every $k\geq 1$, $h_k$ belongs to $\mathbb{F}_{q}[V]^{O^-(2m+2,q)}$.
\end{lemma}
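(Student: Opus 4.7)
The plan is to mirror the structure of the analogous Lemmas \ref{le:hkinvariantsGUeven}, \ref{le:hkinvGUodd}, \ref{hkinvariantSp} and \ref{le:hkinvO+}: first reduce the general-$k$ claim to the base case $k=1$ using the Steenrod formalism, and then verify the base case directly from the definition of $O^-(2m+2,q)$ as the stabiliser of the quadratic form $Q$.

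For the induction step, I would invoke Lemma \ref{cor:StOpOmegaGammaLamba} applied to the $\Gamma$-family, which gives $\mathcal{P}^1(\Gamma_{0,\lambda})=\Gamma_{1,\lambda}$ and $\mathcal{P}^{q^s}(\Gamma_{s,\lambda})=\Gamma_{s+1,\lambda}$ for $s\geq 1$. Translating to the present notation, this reads $h_{k+1}=\mathcal{P}^{q^{k-1}}(h_k)$ for every $k\geq 1$. Since each Steenrod operator $\mathcal{P}^i$ commutes with the action of $\mathrm{GL}(V)$ on $\mathbb{F}_q[V]$, as recalled in the introduction, the Steenrod image of any $G$-invariant is again a $G$-invariant for every subgroup $G\leq\mathrm{GL}(V)$. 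A straightforward induction on $k$ then reduces the lemma to the single statement $h_1\in\mathbb{F}_q[V]^{O^-(2m+2,q)}$.

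For the base case, I would observe that, under the usual identification of $x_i$ with the $i$-th coordinate function $v\mapsto\alpha_i$, the polynomial $h_1$ is nothing but the quadratic form $Q$ whose preservation defines $O^-(2m+2,q)$. Thus for any $M\in O^-(2m+2,q)$ and any $v\in\bar{\mathbb{F}}_q^{2m+2}$,
\[
(M\cdot h_1)(v)=h_1(M^{-1}v)=Q(M^{-1}v)=Q(v)=h_1(v),
\]
so $M\cdot h_1=h_1$, which completes the argument.

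I do not anticipate any genuine obstacle: both the Steenrod reduction and the identification $h_1=Q$ are immediate, and the lemma is entirely routine once these two ingredients are in place. The only thing that needs double-checking is that the explicit formula for $h_1$ indeed matches the quadratic form $Q$ recorded just before the lemma statement, which is a direct comparison of the two displayed expressions.
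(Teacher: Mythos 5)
Your proposal is correct and follows essentially the same route as the paper: reduce to $k=1$ via Lemma \ref{cor:StOpOmegaGammaLamba} (the relations $\mathcal{P}^1(\Gamma_{0,\lambda})=\Gamma_{1,\lambda}$ and $\mathcal{P}^{q^{s}}(\Gamma_{s,\lambda})=\Gamma_{s+1,\lambda}$, i.e.\ $h_{k}=\mathcal{P}^{q^{k-2}}(h_{k-1})$ for $k>1$) together with the fact that Steenrod operations commute with the ${\rm GL}(V)$-action, and then observe that $h_1$ coincides with the quadratic form $Q$ defining $O^-(2m+2,q)$, hence is invariant by definition of the group.
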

\begin{proof}
We know from Lemma \ref{cor:StOpOmegaGammaLamba} that for $k>1$, $h_k$ is the $q^{k-2}$-th Steenrod operation of $h_{k-1}$ and so we only need to check that $h_1$ is invariant. Just as in the proof of Lemma \ref{le:hkinvO+}, this follows from the definition of $O^-(2m+2,q)$.
\end{proof}

Just as in the previous subsection, we study separately the cases when $q$ is odd and when it is even.
\begin{theorem}\label{the:invfieldO-odd}
Let $q$ be odd and  let $G$ be the Sylow $p$-subgroup of $O^-(2m+2,q)$ defined in Lemma \ref{le:sylowOrthogonaloddq}. The invariant field $\mathbb{F}_{q}(V)^G$ is generated by the polynomials $N(x_i)$, with $i=1,\ldots,m+2$, and the polynomials $h_j$, with $j=1,\ldots,m$, i.e.,
\[
\mathbb{F}_{q}(V)^G=\mathbb{F}_{q}(x_1,N(x_2),\ldots,N(x_{m+2}),h_1,\ldots,h_{m}).
\]
\end{theorem}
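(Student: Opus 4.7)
The proof would mirror the structure of Theorem~\ref{the:invfieldO+odd}, using Theorem~\ref{the:invfieldgen} to build a transcendence basis $\phi_1,\ldots,\phi_{2m+2}$ with each $\phi_j\in R[j]^G$ of smallest positive $x_j$-degree. The indices split naturally at $j=m+2$: for $j\leq m+2$ I would set $\phi_j := N(x_j)$, and for $j=m+2+k$ with $k\in\{1,\ldots,m\}$ I would set $\phi_j := \psi_{m-k}(h_1)$.

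For the lower range, observe via Lemma~\ref{lem:solutions} that the $F$-block of an element of $\mathfrak{G}^+_{J_m,X_2}$ must satisfy $F^T X_2 F = X_2$ (over $\mathbb{F}_q$). Writing $F=\bigl(\begin{smallmatrix}1&0\\ b&1\end{smallmatrix}\bigr)$, a direct computation produces the conditions $2ab=0$ and $2b+2ab^2=0$; since $q$ is odd and $a\neq 0$ (otherwise $X^2+X+a$ would factor as $X(X+1)$), this forces $b=0$ and hence $F=I$. With $A\in U(m,q)$ and $B\in M(2\times m,q)$ free, the induced action on $R[m+2]$ realises precisely the group $U_1$ of Lemma~\ref{le:phijingeneral} with $n=m+2$, and that lemma delivers $R[j]^G=\mathbb{F}_q[x_1,N(x_2),\ldots,N(x_j)]$ with $N(x_j)$ of minimal $x_j$-degree for each $j\leq m+2$.

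For the upper range, I would isolate the abelian subgroup $H\leq G$ consisting of elements with $A=I$ and $B=0$; then $S+S^T=0$ and $C=J_m S$, and the skew-symmetry of $S$ translates into $c_{ij}=-c_{m-j+1,m-i+1}$, identifying $H$ with the group $H^-$ of Proposition~\ref{prop:minimaldegrees} for $t=m$, $d=2$. Since $R[m+2+k]^G\subseteq R[m+2+k]^{H^-}$, Proposition~\ref{prop:minimaldegrees}-2 supplies the lower bound $q^{m-k}$ on the $x_{m+2+k}$-degree of any $G$-invariant in $R[m+2+k]$. By Proposition~\ref{prop:degreesphijOmegaGammaDelta}-1 the polynomial $\psi_{m-k}(h_1)=\psi_{m-k}(\Gamma_{0,a})$ lies in $R[m+2+k]$ with $x_{m+2+k}$-degree exactly $q^{m-k}$, and Lemma~\ref{le:hkinvO-} together with Proposition~\ref{prop:psil}-4 makes it $G$-invariant, so the choice $\phi_{m+2+k}:=\psi_{m-k}(h_1)$ attains the bound.

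Theorem~\ref{the:invfieldgen} then yields $\mathbb{F}_q(V)^G = \mathbb{F}_q(x_1,N(x_2),\ldots,N(x_{m+2}),\psi_{m-1}(h_1),\ldots,\psi_0(h_1))$, and an inductive application of Lemma~\ref{cor:psilOmegaGammaDeltaalgebramemb}-2, writing each $\psi_l(h_1)\in\mathbb{F}_q[x_1,N(x_2),\ldots,N(x_l),h_1,\ldots,h_{l+1}]$, replaces the Steenrod images by $h_1,\ldots,h_m$ and produces the claimed generating set. The only genuinely new point compared to the $O^+$ argument is the verification that $F=I$ is forced by the shape of $X_2=\bigl(\begin{smallmatrix}a&1\\1&2a\end{smallmatrix}\bigr)$ in odd characteristic, which guarantees that the action on $R[m+2]$ is exactly $U_1$ and not larger; every other step parallels Theorems~\ref{the:invfieldO+odd} and \ref{the:invfieldsSp} almost verbatim.
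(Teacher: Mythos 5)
Your proposal is correct and follows essentially the same route as the paper: the paper's own proof simply says to repeat the argument of Theorem \ref{the:invfieldO+odd} with $m$ replaced by $m+1$ and $\Omega_{0,1}$ by $\Gamma_{0,\cdot}$, which is exactly what you carry out (lower range via $U_1$ and Lemma \ref{le:phijingeneral}, upper range via $H^-$ with $t=m$, $d=2$, Propositions \ref{prop:minimaldegrees} and \ref{prop:degreesphijOmegaGammaDelta}, then Theorem \ref{the:invfieldgen} and Lemma \ref{cor:psilOmegaGammaDeltaalgebramemb}). Your explicit check that $F^TX_2F=X_2$ forces $F=I$ in odd characteristic is a detail the paper leaves implicit, but it is a welcome verification rather than a deviation.
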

\begin{proof}
If in the proof of Theorem \ref{the:invfieldO+odd} we replace $m$ by $m+1$ and $\Omega_{0,1}$ by $\Gamma_{0,1}$, then we obtain a proof for this theorem.
\end{proof}

Now we assume that the characteristic of $\mathbb{F}_{q}$ is $2$ and consider the subgroup $G_1$ of $O^-(2m+2,q)$ given in Lemma \ref{le:sylowOrthogonalevenq}.

\begin{lemma}\label{le:invfieldO-evenG1}
The invariant field for $G_1$ is generated by the polynomials $N(x_i)$, with $i=1,\ldots,m+2$, and the polynomials $h_k$, with $k=1,\ldots,m$, i.e.,
\[
\mathbb{F}_{q}(V)^{G_1}=\mathbb{F}_{q}(x_1,N(x_2),\ldots,N(x_{m+2}),h_1,\ldots,h_{m}).
\]
\end{lemma}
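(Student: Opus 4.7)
The plan is to mirror the argument of Theorem \ref{the:invfieldO-odd} for odd $q$, making the two adjustments that appear in Lemma \ref{le:invfieldO+evenG1}: in characteristic $2$ the constraint $s_{ii} = b_{1i}^2 + b_{1i}b_{2i} + b_{2i}^2$ forces the anti-diagonal of the associated matrix $C$ to vanish, and Remark \ref{re:H+-diagonalzero} then supplies the required degree lower bound in place of Proposition \ref{prop:minimaldegrees}.

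First I would identify the abelian subgroup $H_1 \leq G_1$ obtained by setting $A = I_m$ and $B = 0$. The relation $S + \bar{S}^T = -B^T J_2 \bar{B}$ collapses to $S + S^T = 0$, so $S$ is symmetric in characteristic $2$, and the $O^-$ constraint forces the diagonal $s_{ii}$ to vanish. Writing $C = J_m S$ (as in the proof of Theorem \ref{the:invariantfieldGUeven}), one checks that $c_{ij} = c_{m-j+1, m-i+1}$ and $c_{i, m-i+1} = 0$; since in characteristic $2$ the $H^+$ and $H^-$ conditions coincide, this places $H_1$ inside the setup of Remark \ref{re:H+-diagonalzero} with $t = m$ and $d = 2$. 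The containment $R[m+2+k]^{G_1} \subseteq R[m+2+k]^{H_1}$ then gives, for each $k \in \{1, \ldots, m\}$, the lower bound $q^{m-k}$ on the degree in $x_{m+2+k}$ of any $G_1$-invariant in $R[m+2+k]$.

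This bound is matched by $\psi_{m-k}(h_1)$: Proposition \ref{prop:degreesphijOmegaGammaDelta}(1) says its $x_{m+2+k}$-degree is exactly $q^{m-k}$, and Lemma \ref{le:hkinvO-} together with Proposition \ref{prop:psil}(4) makes it $G_1$-invariant. Hence $\phi_{m+2+k} := \psi_{m-k}(h_1)$ is a valid choice in the Campbell--Chuai--Kang algorithm. For $j \leq m+2$, I would exploit the fact that in characteristic $2$ the condition $F^T J_2 \bar F = J_2$ holds automatically for every $F \in U(2,q)$, so the upper-left $(m+2) \times (m+2)$ block of an element of $G_1$ can be any element of $U(m+2, q)$. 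Consequently $G_1$ acts on $R[m+2]$ as the full group $U(m+2, q)$, and the standard choice $\phi_j := N(x_j)$ realises the minimal $x_j$-degree in $R[j]^{G_1}$.

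Finally I would apply Theorem \ref{the:invfieldgen} and then use Lemma \ref{cor:psilOmegaGammaDeltaalgebramemb}(2) combined with Proposition \ref{prop:psil}(2) and the convention $\Gamma_{-1,a} = 0$ to rewrite each $\psi_{m-k}(h_1)$ as an element of $\mathbb{F}_q[x_1, N(x_2), \ldots, N(x_{m-k}), h_1, \ldots, h_{m-k+1}]$; this permits replacing the $\psi_l(h_1)$'s in the generating set by $h_1, \ldots, h_m$ and yields the claimed description of $\mathbb{F}_q(V)^{G_1}$. The main subtlety I expect is step one, namely verifying that the $O^-$ diagonal condition really does impose precisely the vanishing of $c_{i, m-i+1}$ that Remark \ref{re:H+-diagonalzero} requires.
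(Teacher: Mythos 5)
Most of your proposal coincides with the paper's intended argument (the paper literally reruns the proof of Lemma \ref{le:invfieldO+evenG1} with $m$ replaced by $m+1$, using Theorem \ref{the:invfieldO-odd} in place of Theorem \ref{the:invfieldO+odd}): the subgroup $H_1$ with $A=I_m$, $B=0$ does land in the situation of Remark \ref{re:H+-diagonalzero} with $t=m$, $d=2$, the resulting bound $q^{m-k}$ in $x_{m+2+k}$ is matched by $\psi_{m-k}(h_1)$, and the final rewriting via Lemma \ref{cor:psilOmegaGammaDeltaalgebramemb} is as in the paper. The genuine gap is your treatment of the block $F$: you argue that, since $F^{T}J_2F=J_2$ is automatic in characteristic $2$, the elements of $G_1$ may carry an arbitrary $F\in U(2,q)$, so that $G_1$ acts on $R[m+2]$ as the full group $U(m+2,q)$. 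This is false: $G_1$ stabilises the quadratic form $Q$, not merely its polar form, and $Q$ does constrain $F$. Writing $F=\left(\begin{smallmatrix}1&0\\ c&1\end{smallmatrix}\right)$, the computation of Section \ref{sec:proofsylows} produces an extra term $(c+ac^{2})\alpha_{m+1}^{2}$ in $Q(Nv)$, so only $c=0$ is allowed in $G_1$ (the one further admissible value $c=a^{-1}$ is not included in the paper's $G_1$); this is exactly why $|G_1|=q^{m(m+1)}$ in Section \ref{sec:proofsylows} and why $F$ is the identity in the rank-$8$ example of Section 2. With arbitrary $F$ your group would have order $q^{m(m+1)+1}$, could not lie inside a Sylow $2$-subgroup of order $2q^{m(m+1)}$, and would not fix $h_1=\Gamma_{0,a}$, contradicting your own appeal to Lemma \ref{le:hkinvO-}.

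The error is not harmless. Since $F=I_2$ in $G_1$, the image of $x_{m+2}$ has no $x_{m+1}$-component, so $G_1$ acts on $R[m+2]$ as the group $U_1$ of Lemma \ref{le:phijingeneral} (with $n$ there equal to $m+2$); hence $N(x_{m+2})=F_{m,q}(x_{m+2})$ has $x_{m+2}$-degree $q^{m}$, and by Lemma \ref{le:phijingeneral} this is the minimal degree in $R[m+2]^{G_1}$. Your $\phi_{m+2}$ would instead be $F_{m+1,q}(x_{m+2})$, of degree $q^{m+1}$, which is not of minimal $x_{m+2}$-degree, so Theorem \ref{the:invfieldgen} cannot be applied with that choice; in fact the orthogonal transvection with $c=a^{-1}$ fixes $x_1,N(x_2),\dots,N(x_{m+1}),F_{m+1,q}(x_{m+2})$ and all the $h_k$, but moves $F_{m,q}(x_{m+2})$, so the field your generators produce is a proper subfield of $\mathbb{F}_q(V)^{G_1}$. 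Replacing this one step by the identification of the action on $R[m+2]$ with $U_1$, exactly as in the proofs of Theorems \ref{the:invfieldO+odd} and \ref{the:invfieldO-odd}, repairs the argument; everything else you wrote is correct.
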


\begin{proof}
If we replace $m$ by $m+1$ and use Theorem \ref{the:invfieldO-odd} instead of Theorem \ref{the:invfieldO+odd} in the proof of Lemma \ref{le:invfieldO+evenG1}, the we get a proof for the result here stated.
\end{proof}

\begin{theorem}\label{the:invfieldO-even}
Let $q$ be even and let $G$ be the Sylow $p$-subgroup of $O^-(2m+2,q)$, given by Lemma \ref{le:sylowOrthogonalevenq}. Then
$\mathbb{F}_{q}(V)^{G}$ is generated by
\[
x_1,N(x_2),\ldots,N(x_{m}),N(x_{m+1})^2+N(x_{m})N(x_{m+1}),N(x_{m+2}),h_1,\ldots,h_{m}.
\]
\end{theorem}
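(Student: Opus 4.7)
The plan is to mirror the proof of Theorem~\ref{the:invfieldO+even} in the $O^-$ case. By Lemma~\ref{le:sylowOrthogonalevenq}(3) one has $G = \langle G_1, L_1 \rangle$, and following the paper's pattern I would first argue that $L_1$ normalises $G_1$, so that $G_1 \trianglelefteq G$ with $G/G_1 = \langle L_1 \rangle$ of order~$2$. Galois theory then gives $\mathbb{F}_q(V)^G = (\mathbb{F}_q(V)^{G_1})^{\langle L_1 \rangle}$, and by Lemma~\ref{le:invfieldO-evenG1} the inner field is the fraction field of the polynomial ring
\[
R \;=\; \mathbb{F}_q\bigl[x_1, N(x_2), \ldots, N(x_{m+2}), h_1, \ldots, h_m\bigr].
\]
The task is thus reduced to computing $R^{\langle L_1 \rangle}$ and then passing to fraction fields.

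The central step is to pin down the action of $L_1$ on each generator of $R$. The matrix description of $L_1$ gives $L_1 \cdot x_i = x_i$ for $i \ne m+1$ and $L_1 \cdot x_{m+1} = x_{m+1} + x_{m+2}$. This immediately shows that $L_1$ fixes $x_1$ and $N(x_2), \ldots, N(x_m)$ (which are polynomials in $x_1, \ldots, x_m$) and every $h_k$, since $h_k \in \mathbb{F}_q[V]^{O^-(2m+2,q)}$ by Lemma~\ref{le:hkinvO-}. A short orbit argument using $L_1 \cdot \mathrm{Orbit}_{G_1}(x_{m+2}) = \mathrm{Orbit}_{G_1}(L_1 x_{m+2}) = \mathrm{Orbit}_{G_1}(x_{m+2})$ (possible because $L_1$ normalises $G_1$ and fixes $x_{m+2}$) shows that $N(x_{m+2})$ is preserved as well. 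The crucial identity is
\[
L_1 \cdot N(x_{m+1}) \;=\; N(x_{m+1}) + N(x_m),
\]
which I would prove by expanding $L_1 \cdot N(x_{m+1}) = \prod_{y \in L_1 \cdot \mathrm{Orbit}_{G_1}(x_{m+1})} y$ via the explicit description of $\mathrm{Orbit}_{G_1}(x_{m+1})$ coming from Lemma~\ref{le:sylowOrthogonalevenq}(3), and then applying the $\mathbb{F}_q$-linearity of the linearised Dickson polynomials $F_{\cdot,q}$ (Lemma~\ref{lem:Fn(X)ind}) to split the result and identify the correction term with $N(x_m)$.

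Once this action is known, the rest is routine. The group $\langle L_1 \rangle$ acts on the polynomial ring $R$ fixing every generator except $N(x_{m+1})$, and on the subring $\mathbb{F}_q[N(x_m), N(x_{m+1})]$ by an order-$2$ translation with $N(x_m)$ invariant. In characteristic~$2$ the $\langle L_1 \rangle$-trace is $N(x_{m+1}) + L_1 \cdot N(x_{m+1}) = N(x_m)$, which is already among the fixed generators, so the only genuinely new invariant is the norm
\[
N(x_{m+1}) \cdot \bigl(N(x_{m+1}) + N(x_m)\bigr) \;=\; N(x_{m+1})^2 + N(x_m)\, N(x_{m+1}).
\]
Consequently
\[
R^{\langle L_1\rangle} \;=\; \mathbb{F}_q\bigl[x_1, N(x_2), \ldots, N(x_m), N(x_{m+1})^2 + N(x_m) N(x_{m+1}), N(x_{m+2}), h_1, \ldots, h_m\bigr],
\]
and passing to fraction fields gives exactly the generating set in the theorem.

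The main obstacle I anticipate is the verification of $L_1 \cdot N(x_{m+1}) = N(x_{m+1}) + N(x_m)$. Naively $L_1 \cdot N(x_{m+1}) = N(x_{m+1} + x_{m+2})$, and the linearised Dickson expansion produces a term in $x_{m+2}$ that is not manifestly inside $R$; collapsing this term to $N(x_m)$ requires a careful analysis of how the $G_1$-orbits of the two middle variables $x_{m+1}, x_{m+2}$ interact under the $L_1$-shift, using the characteristic-$2$ constraint $s_{ii} = b_{1i}^2 + b_{1i} b_{2i} + b_{2i}^2$ that distinguishes $G_1$ from the ambient bilinear-form stabiliser.
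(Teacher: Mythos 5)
Your overall strategy coincides with the paper's: Galois descent through $G_1\unlhd G$ with $G/G_1=\langle L_1\rangle$ of order two, Lemma \ref{le:invfieldO-evenG1} for the inner field, and then the invariants of the order-two action on the polynomial ring $R$. The genuine gap is exactly the step you flag as the main obstacle: the ``crucial identity'' $L_1\cdot N(x_{m+1})=N(x_{m+1})+N(x_m)$ cannot be verified, because it is false. The $G_1$-orbit of $x_{m+1}$ is $x_{m+1}+\langle x_1,\ldots,x_m\rangle_{\mathbb{F}_q}$ (the first row of $B$ is arbitrary), so $N(x_{m+1})=F_{m,q}(x_{m+1})$ is homogeneous of degree $q^m$, whereas the $G_1$-orbit of $x_m$ is $x_m+\langle x_1,\ldots,x_{m-1}\rangle_{\mathbb{F}_q}$, so $N(x_m)=F_{m-1,q}(x_m)$ has degree $q^{m-1}$; a linear substitution such as $L_1$ preserves homogeneous degree, so $L_1\cdot N(x_{m+1})$ can never equal the inhomogeneous sum $N(x_{m+1})+N(x_m)$. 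No analysis of the characteristic-two constraint on the diagonal of $S$ will collapse the extra term to $N(x_m)$.

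What actually happens is simpler than you fear. With the paper's description of $G_1$ (the middle block $F$ is the identity), the $G_1$-orbit of $x_{m+2}$ is $x_{m+2}+\langle x_1,\ldots,x_m\rangle_{\mathbb{F}_q}$, so $N(x_{m+2})=F_{m,q}(x_{m+2})$, and since $F_{m,q}$ is additive (as noted in the proof of Lemma \ref{lem:Fn(X)ind}) and $L_1$ fixes $x_1,\ldots,x_m,x_{m+2}$,
\[
L_1\cdot N(x_{m+1})=F_{m,q}(x_{m+1}+x_{m+2})=N(x_{m+1})+N(x_{m+2}),
\]
so the correction term is already one of the generators of $R$ and nothing needs to be ``collapsed''. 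With this corrected action your trace/norm argument for an order-two group in characteristic two goes through verbatim and gives
\[
R^{\langle L_1\rangle}=\mathbb{F}_q\bigl[x_1,N(x_2),\ldots,N(x_m),N(x_{m+1})^2+N(x_{m+2})N(x_{m+1}),N(x_{m+2}),h_1,\ldots,h_m\bigr],
\]
hence the invariant field is generated as in the theorem but with $N(x_{m+2})N(x_{m+1})$ in place of $N(x_m)N(x_{m+1})$. Note that the paper's own proof asserts the same identity you do ($N(x_{m+1})\mapsto N(x_{m+1})+N(x_m)$); this appears to be an index slip carried over from the $O^+(2m,q)$ case, where the two middle variables are $x_m,x_{m+1}$, and it propagates into the statement of Theorem \ref{the:invfieldO-even}. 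So your proposal faithfully mirrors the paper's route, but the one step you could not complete is precisely the step that must be corrected, and after the correction the generating set in the statement has to be amended accordingly.
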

\begin{proof}
We use  similar arguments to those in the proof of Theorem \ref{the:invfieldO+even}. Here we also have
$\mathbb{F}_{q}(V)^{G}=(\mathbb{F}_{q}(V)^{G_1})^{<L_1>}.$
Applying Lemma \ref{le:invfieldO-evenG1} we get
$\mathbb{F}_{q}(V)^{G}=\mathbb{F}_{q}(x_1,N(x_2),\ldots,N(x_{m+2}),h_1,\ldots,h_{m})^{<L_1>},$
which is the fraction field of $R^{<L_1>}$ with
$R:=\mathbb{F}_{q}[x_1,N(x_2),\ldots,N(x_{m+2}),h_1,\ldots,h_{m}].$
Now, we can easily check that $<L_1>$ is a group of order 2 acting on $R$ by fixing  $x_1,N(x_2),\ldots,N(x_{m}),N(x_{m+2}),h_2,\ldots,h_{m-1}$ and mapping $N(x_{m+1})\mapsto N(x_{m+1})+N(x_{m})$.
Applying Theorem \ref{the:polynomialring} we can prove that the invariant ring of a group of order 2 acting on $\mathbb{F}_{q}[X,Y]$ such that it fixes $X$ and maps $Y$ to $Y+X$ is generated by $X$ and $Y^2+XY$.
Hence
$\mathbb{F}_{q}(x_1,N(x_2),\ldots,N(x_{m+2}),h_1,\ldots,h_{m})^{<L_1>}$ is generated by
\[x_1,N(x_2),\ldots,N(x_{m}),
N(x_{m+1})^2+N(x_{m})N(x_{m+1}),N(x_{m+2}),h_1,\ldots,h_{m})
\]
and the proof is complete.
\end{proof}

\subsection{The Invariant Field of a Sylow $p$-subgroup of $O(2m+1,q)$}
In \cite{Taylor92} we can see that the orthogonal group $O(2m+1,q)$ is the group of invertible matrices that preserve the quadratic form
$$Q(v)=\sum_{i=1}^{m}\alpha_{2m+1-i+1}\alpha_i+\alpha_{m+1}^2,$$
where we chose a basis for $V$ such that
$$v=\sum_{i=1}^m(\alpha_iu_i+\alpha_{2m+2-i+1}v_i)+\alpha_{m+1}w.$$

Consider the following family of polynomials: for $k\geq 1$ take
$$h_k:=\Gamma_{k-1,0}.$$
In particular, $h_1=\sum_{i=1}^mx_{n-i+1}x_i+x_{m+1}^2$.
The next lemma shows that all these polynomials are invariant under the action of $O(2m+1,q)$.

\begin{lemma}\label{hkinvariantO2m+1}
For all $k\geq 1$ the polynomials $h_k$ belong to $\mathbb{F}_{q}[V]^{O(2m+1,q)}$.
\end{lemma}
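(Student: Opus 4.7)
The plan is to follow the exact template set by the earlier invariance lemmas (\ref{le:hkinvariantsGUeven}, \ref{le:hkinvO+}, \ref{le:hkinvO-}), since the formal structure is identical: the $h_k$ here are defined as $\Gamma_{k-1,0}$, and the Steenrod identity in Lemma \ref{cor:StOpOmegaGammaLamba}(3) gives
\[
\mathcal{P}^{q^{k-2}}(\Gamma_{k-2,0}) = \Gamma_{k-1,0} \quad \text{for } k \geq 2,
\]
so $h_k = \mathcal{P}^{q^{k-2}}(h_{k-1})$ whenever $k > 1$. Since the Steenrod operators $\mathcal{P}^i$ commute with the natural $\mathrm{GL}(V)$-action on $\mathbb{F}_q[V]$ (as recalled in the introduction), they send invariants of any subgroup $G \leq \mathrm{GL}(V)$ to invariants. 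Consequently, a single induction on $k$ reduces the lemma to the base case $k = 1$.

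For the base case, the task is to check that
\[
h_1 = \Gamma_{0,0} = \sum_{i=1}^{m} x_{n-i+1} x_i + x_{m+1}^2
\]
is $O(2m+1,q)$-invariant. But this is immediate from the very definition of $O(2m+1,q)$: the polynomial $h_1$ evaluated at a vector $v = \sum_{i=1}^{m}(\alpha_i u_i + \alpha_{2m+2-i} v_i) + \alpha_{m+1} w$ returns exactly the quadratic form $Q(v) = \sum_{i=1}^{m}\alpha_{2m+1-i+1}\alpha_i + \alpha_{m+1}^2$ used to define the group. Hence $M \cdot h_1 = h_1$ for every $M \in O(2m+1,q)$, by the same short argument as in the proof of Lemma \ref{le:hkinvariantsGUeven}.

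There is essentially no obstacle here: the only points needing a line of verification are that the Steenrod-index shift $s = k-1 \geq 1$ matches the hypothesis $s \geq 1$ in part (3) of Lemma \ref{cor:StOpOmegaGammaLamba}, and that the parameter $\lambda = 0$ is admissible in the statement of that lemma (which it is, since $\lambda$ ranges over $\mathbb{F}$). Everything else is a direct citation of previously established facts, so the proof will be only a few lines long, entirely parallel to Lemmas \ref{le:hkinvO+} and \ref{le:hkinvO-}.
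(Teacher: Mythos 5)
Your proposal is correct and follows essentially the same route as the paper: reduce to $k=1$ via the relation $h_k=\mathcal{P}^{q^{k-2}}(h_{k-1})$ from Lemma \ref{cor:StOpOmegaGammaLamba} (using that Steenrod operations commute with the ${\rm GL}(V)$-action), and then note that $h_1=\Gamma_{0,0}$ is precisely the quadratic form defining $O(2m+1,q)$. One tiny indexing point: the shift is $s=k-2$ (not $k-1$), so the step $k=2$ is covered by part (1) of Lemma \ref{cor:StOpOmegaGammaLamba}, namely $\mathcal{P}^1(\Gamma_{0,\lambda})=\Gamma_{1,\lambda}$, while part (3) handles $k\geq 3$; this does not affect the argument.
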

\begin{proof}
It follows from Lemma \ref{cor:StOpOmegaGammaLamba} that $h_k=\mathcal{P}^{q^{k-2}}(h_{k-1})$ for $k>1$. Hence it suffices to show that $h_1$ is an invariant polynomial. Again as in the proof of Lemma \ref{le:hkinvO+}, this follows from the definition of the group $O(2m+1,q)$.
\end{proof}

\begin{theorem}\label{the:invfieldO2m+1odd}
Let $G$ be the Sylow $p$-subgroup of $O(2m+1,q)$ given by Lemma \ref{le:sylowOrthogonaloddq} or by Lemma \ref{le:sylowOrthogonalevenq}. The invariant field $\mathbb{F}_{q}(V)^G$ is generated by the polynomials $N(x_i)$, with $i=1,\ldots,m+1$, and the polynomials $h_k$, with $k=1,\ldots,m$, i.e.,
\[
\mathbb{F}_{q}(V)^G=\mathbb{F}_{q}(x_1,N(x_2),\ldots,N(x_{m+1}),h_1,\ldots,h_{m}).
\]
\end{theorem}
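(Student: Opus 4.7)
The plan is to apply Theorem \ref{the:invfieldgen} with generators $\phi_j$ produced along the pattern used for the earlier classical groups, merging the $q$ odd case (Lemma \ref{le:sylowOrthogonaloddq}) and $q$ even case (Lemma \ref{le:sylowOrthogonalevenq}) into a single argument. In both Sylow descriptions the block $F$ is the $1\times 1$ identity, so the action of $G$ on $R[m+1]=\mathbb{F}_q[x_1,\ldots,x_{m+1}]$ factors as lower uni-triangular substitution on $x_1,\ldots,x_m$ via $A\in U(m,\mathbb{F}_q)$ together with an arbitrary shift of $x_{m+1}$ by $\mathbb{F}_q$-linear combinations of $x_1,\ldots,x_m$. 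Hence the restriction of $G$ to $R[m+1]$ coincides with $U(m+1,\mathbb{F}_q)$ (acting as the group $U_1$ of Lemma \ref{le:phijingeneral}), so I would set $\phi_j:=N(x_j)$ for $j\le m+1$.

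For the remaining $\phi_{m+1+k}$ with $k\in\{1,\ldots,m\}$, I would first pass to the abelian subgroup $H\le G$ cut out by $A=I_m$, $F=1$, and $B=0$. The system (\ref{syst:solution}) then reduces to $S+S^T=0$, and in both characteristics one obtains the additional constraint $s_{ii}=0$: automatically when $q$ is odd since $2s_{ii}=0$, and by the explicit Sylow hypothesis $s_{ii}=b_{1i}^2=0$ coming from Lemma \ref{le:sylowOrthogonalevenq}(1) when $q$ is even. A routine computation with $C=J_mS$ shows that $c_{i,j}=-c_{m-j+1,m-i+1}$ and $c_{i,m-i+1}=0$, so $H$ sits inside $H^-$ with $t=m$ and $d=1$ and satisfies the extra hypothesis of Remark \ref{re:H+-diagonalzero}. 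Consequently, for each $k$ the inclusion $R[m+1+k]^G\subseteq R[m+1+k]^H$ combined with Proposition \ref{prop:minimaldegrees}(2) (for $q$ odd) or Remark \ref{re:H+-diagonalzero} (for $q$ even) forces any $G$-invariant to have $x_{m+1+k}$-degree at least $q^{m-k}$.

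On the other hand $\psi_{m-k}(h_1)=\psi_{m-k}(\Gamma_{0,0})$ is a $G$-invariant by Lemma \ref{hkinvariantO2m+1} together with Proposition \ref{prop:psil}(4), and Proposition \ref{prop:degreesphijOmegaGammaDelta}(1) gives its $x_{m+1+k}$-degree as exactly $q^{m-k}$; I would therefore take $\phi_{m+1+k}:=\psi_{m-k}(h_1)$. Theorem \ref{the:invfieldgen} then yields $\mathbb{F}_q(V)^G=\mathbb{F}_q(x_1,N(x_2),\ldots,N(x_{m+1}),\psi_{m-1}(h_1),\ldots,\psi_0(h_1))$, and the final step is a triangular change of generators: Lemma \ref{cor:psilOmegaGammaDeltaalgebramemb}(2) (with the convention $\Gamma_{-1,0}=0$), together with $\psi_{i-1}(x_i)=N(x_i)$ from Proposition \ref{prop:psil}(2), expresses each $\psi_{m-k}(h_1)$ as a polynomial in $x_1,N(x_2),\ldots,N(x_{m-k}),h_1,\ldots,h_{m-k+1}$, so the $\psi_l(h_1)$ can be successively replaced by $h_1,\ldots,h_m$ in the generating set. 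The slightly delicate point is checking uniformly in both characteristic cases that the relevant anti-diagonal of $C$ vanishes, so that Remark \ref{re:H+-diagonalzero} (and not Proposition \ref{prop:minimaldegrees} alone) supplies the sharp lower bound $q^{m-k}$ when $q$ is even; a naive application of Proposition \ref{prop:minimaldegrees}(2) would give $q^{m-k+1}$ in that case and destroy the matching with the degree of $\psi_{m-k}(h_1)$.
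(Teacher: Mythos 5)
Your proposal is correct and follows essentially the same route as the paper, which proves this theorem by exactly the argument you spell out: take $\phi_j=N(x_j)$ for $j\le m+1$, pass to the abelian subgroup $H=H^-$ (with $t=m$, $d=1$) to get the lower bound $q^{m-k}$ via Proposition \ref{prop:minimaldegrees} for odd $q$ and via the $s_{ii}=0$ condition and Remark \ref{re:H+-diagonalzero} (through Lemma \ref{le:invfieldO+evenG1}) for even $q$, match it with $\psi_{m-k}(\Gamma_{0,0})=\psi_{m-k}(h_1)$, and finish with Theorem \ref{the:invfieldgen} plus the triangular substitution from Lemma \ref{cor:psilOmegaGammaDeltaalgebramemb}. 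The only cosmetic slip is the parenthetical identifying the restriction of $G$ to $R[m+1]$ with the group $U_1$ of Lemma \ref{le:phijingeneral}: since $B$ is arbitrary it is the full $U(m+1,\mathbb{F}_q)$, but this does not affect the choice $\phi_j=N(x_j)$.
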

\begin{proof}
First assume that $q$ is odd. The proof is analogous, for example, to the proofs of Theorems \ref{the:invfieldO+odd} or \ref{the:invariantfieldGUodd}.
In the same way we construct an abelian subgroup $H$ of $G$ which we then prove to be the subgroup $H^-$. Therefore Proposition  \ref{prop:minimaldegrees} tell us that $q^{m-k}$ is a lower bound for the minimal degree in $x_{m+1+k}$ of a polynomial in $R[m+1+k]^G$ for every $k\in \{1,\ldots,m\}$. Applying Propositions  \ref{prop:degreesphijOmegaGammaDelta}, Lemma \ref{cor:psilOmegaGammaDeltaalgebramemb}, Proposition \ref{prop:psil}-2 and Lemma \ref{hkinvariantO2m+1} we conclude that
$$\psi_{m-k}(\Gamma_{0,0})=\psi_{m-k}(h_1)\in R[m+1+k]^G$$
with degree $q^{m-k}$ in $x_{m+1+k}$.
For each $j\in \{1,\ldots,m+1\}$, and using the same argument as in the proof of \ref{the:invfieldO+odd}, we can show that the degree in $x_j$ of $N(x_j)$ is minimal among the elements of $R[j]^G$. Now, applying Theorem \ref{the:invfieldgen} completes the proof for odd $q$.
If $q$ is even, the proof is analogous to the previous one, but now we use Lemma \ref{le:invfieldO+evenG1} instead of Theorem \ref{the:invfieldO+odd}.
\end{proof}

\section{Proofs for Section 3}\label{sec:proofsylows}
In this section we present the proofs of Lemmas \ref{le:sylowGUandSp}, \ref{le:sylowOrthogonaloddq} and \ref{le:sylowOrthogonalevenq}.
We keep the notation of Section \ref{sec:sylowdescription}.
We start by determining the orders of the groups $\mathfrak G_{X_1,X_2}^{+}$ and $\mathfrak G_{X_1,X_2}^{-}$ with the following extra assumption, which will be satisfied in all
cases considered later:

{\bf Hypothesis (H):} If $\mathbb{F}=\mathbb{F}_q$ with $q$  even, then we assume that the diagonal entries of $B^TX_2B$ are equal to zero for every $B\in M(l\times n, \mathbb{F})$.

\begin{lemma}\label{lem:numberS}
Assume hypothesis (H) and for given $B$ let $\chi_B$ be the number of matrices $S$ satisfying:
$S+(\epsilon\bar{S}^T)=-B^TX_2\bar{B}$. Then $\chi_B$ is, independently of $B$, equal to
$$\left\{
\begin{array}{ll}
q^{n(n-1)}q^n& \textnormal{ if } \mathbb{F}=\mathbb{F}_{q^2}\\
q^{\frac{n(n-1)}{2}} & \textnormal{ if } \mathbb{F}=\mathbb{F}_{q}, q \textnormal{ odd } \textnormal{ and } \epsilon=\textnormal{``$+$"}\\
q^{\frac{n(n-1)}{2}}q^n & \textnormal{ if } \mathbb{F}=\mathbb{F}_{q}, q \textnormal{ even } \textnormal{ and }
\epsilon=\textnormal{``$+$"}\\
q^{\frac{n(n-1)}{2}}q^n & \textnormal{ if } \mathbb{F}=\mathbb{F}_{q}
\textnormal{ and } \epsilon=\textnormal{``-"}\\
\end{array}
\right..$$
\end{lemma}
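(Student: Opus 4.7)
The plan is to exploit the affine structure of the equation $S+\epsilon\bar S^T = M$, where $M := -B^T X_2 \bar B$. Let $\varphi : M(n,\mathbb{F}) \to M(n,\mathbb{F})$ denote the additive map $S \mapsto S+\epsilon\bar S^T$, so that the solution set of the lemma is the fibre $\varphi^{-1}(M)$. This fibre is either empty or a coset of $K := \ker\varphi = \{T \in M(n,\mathbb{F}) : T+\epsilon\bar T^T = 0\}$, so once existence is known, $\chi_B = |K|$ is automatic and in particular independent of $B$. The proof therefore splits cleanly into (i) a count of $|K|$ and (ii) an existence argument; the only point at which hypothesis (H) enters is in (ii).

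For (i), the off-diagonal entries of $T \in K$ pair up as $(t_{ij},t_{ji})$ with $i<j$ via the constraint $t_{ji} = -\epsilon\bar t_{ij}$, so $t_{ij}$ is free and one gets $|\mathbb{F}|^{n(n-1)/2}$ off-diagonal configurations, namely $q^{n(n-1)}$ when $\mathbb{F}=\mathbb{F}_{q^2}$ and $q^{n(n-1)/2}$ when $\mathbb{F}=\mathbb{F}_q$. The diagonal is governed by $t_{ii}+\epsilon\bar t_{ii}=0$: over $\mathbb{F}_{q^2}$ this is the kernel of a nonzero $\mathbb{F}_q$-linear form (the trace or its skew twist), contributing $q$ per diagonal slot; over $\mathbb{F}_q$ it collapses to $2t_{ii}=0$, which forces $t_{ii}=0$ when $q$ is odd and $\epsilon=+$, and is vacuous (giving $q$ choices) in all other $\mathbb{F}_q$ cases, either because $\epsilon=-$ makes the equation trivial or because $-1=1$ in characteristic $2$. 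Multiplying diagonal and off-diagonal contributions reproduces the four formulas of the statement verbatim.

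For (ii), one first checks that $M$ has the expected symmetry $\bar M^T = \epsilon M$: using $\bar X_2^T = \epsilon X_2$, a short manipulation yields $\bar M^T = -B^T \bar X_2^T \bar B = \epsilon M$. In the first three rows of the table this symmetry already suffices to write down an explicit preimage, by setting $s_{ij} = m_{ij}$ and $s_{ji} = 0$ for $i<j$ and solving the diagonal equation $s_{ii}+\epsilon\bar s_{ii} = m_{ii}$ via surjectivity of the $\mathbb{F}_{q^2}/\mathbb{F}_q$-trace (or its skew analogue), or by division by $2$ when $q$ is odd. The obstruction, and the main delicate point, arises in characteristic $2$: there $\varphi(S)$ always has zero diagonal, so $M$ has a preimage if and only if its diagonal vanishes. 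Hypothesis (H) is precisely the assertion that the diagonal of $B^T X_2 \bar B$ is zero, which restores existence. Combined with the coset argument of the first paragraph, this completes the proof.
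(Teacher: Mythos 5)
Your counting is correct and follows essentially the same route as the paper: the paper likewise reduces the problem to the observation that, once nonempty, the solution set is a coset of the kernel of $S\mapsto S+\epsilon\bar S^T$ (whence independence of $B$), counts the off-diagonal contribution as $r^{n(n-1)/2}$ with $r=|\mathbb{F}|$, and settles the diagonal entry by entry (over $\mathbb{F}_{q^2}$ both $\epsilon=+$ and $\epsilon=-$ give a one-dimensional $\mathbb{F}_q$-space per diagonal slot, hence the factor $q^n$). The only real difference is in how existence is exhibited: the paper uses a scalar trick ($S=\tfrac12 M$ when $q$ is odd, $S=cM$ with $c+\bar c=1$ over $\mathbb{F}_{q^2}$, hypothesis (H) when $q$ is even), whereas you build an explicit ``triangular'' preimage after verifying $\bar M^T=\epsilon M$; both work.

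There is, however, one case your existence step does not cover as written: $\mathbb{F}=\mathbb{F}_q$ with $q$ odd and $\epsilon=-$, which is part of the fourth row (the symplectic situation). There the diagonal of $\varphi(S)=S-S^T$ is identically zero --- exactly the obstruction you attribute solely to characteristic $2$ --- and neither ``division by $2$'' (which solves $2s_{ii}=m_{ii}$, i.e. the $\epsilon=+$ diagonal equation) nor hypothesis (H) (assumed only for $q$ even) applies. The case is harmless, but you must say why: the symmetry $\bar M^T=\epsilon M$ here reads $M^T=-M$, so $2m_{ii}=0$ and thus $m_{ii}=0$ in odd characteristic, and your preimage with zero diagonal goes through. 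Relatedly, your claim that ``in the first three rows the symmetry already suffices'' is off for the third row ($q$ even, $\epsilon=+$): there the diagonal equation is $0=m_{ii}$ and hypothesis (H) is genuinely needed, as your own next sentence acknowledges. Tidying up this case split so that it matches the four rows exactly would close the gap; everything else is sound.
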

\begin{proof}
We prove $(i)$ and $(ii)$ at the same time. It is not hard to see that the solution set of the matrix-equation
in the Lemma is non-empty: indeed,  if $\mathbb{F}=\mathbb{F}_q$ this follows from $1/2\in\mathbb{F}_q$
if $q$ is odd and from hypothesis $H$ if $q$ even. Let  $\mathbb{F}=\mathbb{F}_{q^2}$
and $Y=\epsilon\bar{Y}^T$ an arbitrary ``right hand side". Due to the surjectivity of the trace function,
there always exists $c\in\mathbb{F}$ with $c+\bar c=1$, so for $S:=cY$ we have $S+(\epsilon\bar{S}^T)=Y$.
Hence the number of choices for $S$ is the same as the number of solutions for $M+(\epsilon\bar{M}^T)=0$.
For this equation the number of solutions only depends on what happens to the diagonal entries of $M$ when we consider different fields. In fact, for the remaining ones the number of possibilities is always $r^{\frac{n(n-1)}{2}}$, where $r$ is the number of elements in $\mathbb{F}$. \\
When $\mathbb{F}=\mathbb{F}_{q}$, a simple argument give us the result. But if $\mathbb{F}=\mathbb{F}_{q^2}$, then we need to be more careful. Here the equation $M-\bar{M}^T=0$ implies that $m_{ii}=\bar{m}_{ii}$ for all $i$. Hence  $m_{ii}\in \mathbb{F}_{q}$ and there are $q^n$ choices for the elements in the diagonal of $M$.
Now, from $M+\bar{M}^T=0$ we obtain $m_{ii}+\bar{m}_{ii}=0$, i.e., each $m_{ii}$ belongs to the kernel of the
trace map, which has dimension 1 (see Lemma 10.1 in \cite{Taylor92}). So there will be $q^n$ different ways of choosing the elements in the diagonal of $M$.
\end{proof}

Note that for $\mathfrak G_{X_1,X_2}^{+}$ and $\mathfrak G_{X_1,X_2}^{-}$ the number of choices for $A$ and $B$ are the same. If $r$ is the number of elements in $\mathbb{F}$, then there are $r^{\frac{n(n-1)}{2}}$ choices for $A$ and $r^{ln}$ for $B$. Let $s$ be number of matrices $F\in U(l,\mathbb{F})$ satisfying $F^TX_2\bar{F}=X_2$. Applying Lemma \ref{lem:numberS} we obtain the orders of $\mathfrak G_{X_1,X_2}^{+}$ and $\mathfrak G_{X_1,X_2}^{-}$.
\begin{lemma}\label{lem:ordersG+-}
Let $s$ be as above and assume hypothesis (H). Then:
$$|\mathfrak G_{X_1,X_2}^{\epsilon}|=
 \left\{
\begin{array}{ll}
sq^{2n^2+(2l-1)n}& \textnormal{ if } \mathbb{F}=\mathbb{F}_{q^2}\\
sq^{n^2+(l-1)n} & \textnormal{ if } \mathbb{F}=\mathbb{F}_{q},  q \textnormal{ odd }  \textnormal{ and } \epsilon=
 \textnormal{``$+$"}\\
sq^{n^2+ln}  & \textnormal{ if } \mathbb{F}=\mathbb{F}_{q},  q \textnormal{ even}  \textnormal{ and } \epsilon=
 \textnormal{``$+$"} \\
\    & \textnormal{ or if } \mathbb{F}=\mathbb{F}_{q}  \textnormal{ and } \epsilon=
 \textnormal{``$-$"}\\
\end{array}
\right..$$
\end{lemma}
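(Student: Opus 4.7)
The plan is to show that the data determining an element of $\mathfrak G_{X_1,X_2}^{\epsilon}$ consists of the four blocks $A$, $B$, $F$, $S$ (since $C$, $D$, $E$ are forced by (\ref{syst:solution}) once $A$, $B$, $F$ are fixed), and then multiply the counts for each block. Concretely, $A$ ranges freely over $U(n,\mathbb{F})$, contributing $r^{n(n-1)/2}$ choices where $r=|\mathbb{F}|$; $B$ ranges freely over $M(l\times n,\mathbb{F})$, contributing $r^{ln}$ choices; $F$ ranges over the $s$ solutions of $F^T X_2\bar F=X_2$ in $U(l,\mathbb{F})$; and for each fixed $B$, the matrix $S$ ranges over the $\chi_B$ solutions of $S+(\epsilon\bar S^T)=-B^T X_2\bar B$ provided by Lemma \ref{lem:numberS}. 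The remaining blocks $C$, $D$, $E$ are then read off uniquely from the first three equations of (\ref{syst:solution}).

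The main step is to check that the data $(A,B,F,S)$ indeed parametrizes $\mathfrak G_{X_1,X_2}^{\epsilon}$ bijectively. One direction is Lemma \ref{lem:solutions}: any such tuple (with the compatibility constraint on $S$) determines an element by setting $C$, $D$, $E$ according to (\ref{syst:solution}). Conversely, given $N\in\mathfrak G_{X_1,X_2}^{\epsilon}$, its blocks $A$, $B$, $F$ are recovered from the block decomposition, and the matrix $S:=\bar X_1^T\bar E^{-T}\bar X_1 \cdot (\text{something})$ — more simply, $S$ is the unique matrix with $C=\bar X_1(\bar A^{-1})^T\bar S$ — is then forced, while necessarily satisfying $S+(\epsilon\bar S^T)=-B^T X_2\bar B$. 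Hypothesis (H) ensures that for $q$ even the right-hand side $-B^T X_2\bar B$ has zero diagonal, so the count $\chi_B$ from Lemma \ref{lem:numberS} is independent of $B$.

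Multiplying, we get $|\mathfrak G_{X_1,X_2}^{\epsilon}|=s\cdot r^{n(n-1)/2}\cdot r^{ln}\cdot \chi_B$, and the four stated formulas follow by plugging in the four values of $\chi_B$ from Lemma \ref{lem:numberS}. For instance, in the case $\mathbb{F}=\mathbb{F}_{q^2}$ (so $r=q^2$) the product becomes
\[
s\cdot q^{n(n-1)}\cdot q^{2ln}\cdot q^{n(n-1)+n}=s\cdot q^{2n^2+(2l-1)n},
\]
and in the case $\mathbb{F}=\mathbb{F}_q$ with $q$ odd and $\epsilon=+$ it becomes
\[
s\cdot q^{n(n-1)/2}\cdot q^{ln}\cdot q^{n(n-1)/2}=s\cdot q^{n^2+(l-1)n};
\]
the remaining two cases are analogous, each picking up an extra factor of $q^n$ from the diagonal freedom that Lemma \ref{lem:numberS} identified.

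The only mild obstacle is bookkeeping of exponents across the four cases, and checking that the parametrization by $(A,B,F,S)$ is truly bijective — in particular that no element of $\mathfrak G_{X_1,X_2}^{\epsilon}$ is counted more than once because of an ambiguity between $S$ and $C$. But since $\bar X_1(\bar A^{-1})^T$ is invertible, the relation $C=\bar X_1(\bar A^{-1})^T\bar S$ determines $S$ uniquely from $C$ and $A$, which rules out any double-counting.
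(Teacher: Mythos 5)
Your proof is correct and follows essentially the same route as the paper: parametrize elements of $\mathfrak G_{X_1,X_2}^{\epsilon}$ by the free blocks $A$, $B$, $F$ together with the $\chi_B$ choices of $S$ from Lemma \ref{lem:numberS} (the blocks $C$, $D$, $E$ being forced by (\ref{syst:solution})), and multiply the counts. Your additional remarks on the bijectivity of this parametrization and the exponent bookkeeping simply make explicit what the paper leaves implicit.
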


\hyperref[le:sylowGUandSp]{\bf{Proof of Lemma \ref*{le:sylowGUandSp}:}}
It is well known that up to equivalence there is only one non-degenerate hermitian form (see \cite{Taylor92}). Also in \cite{Taylor92} is shown that
\begin{equation}\label{orderGU(V)}
|GU(n,q^2)|=q^{\frac{n(n-1)}{2}}\prod_{i=1}^{n}(q^{i}-(-1)^i).
\end{equation}
Moreover, a basis can be choosen such that the matrix of the hermitian form is of type (\ref{eq:matrixX})
in Section \ref{sec:sylowdescription} with $\epsilon=$ ``$+$" and:
\begin{itemize}
    \item $X_1=J_{m-1}$ and $X_2=J_2$ for $GU(2m,q^2)$;
    \item $X_1=J_{m}$ and $X_2=[1]$  for $GU(2m+1,q^2)$.
\end{itemize}
Let $G_1=\mathfrak G_{J_{m-1},J_2}^{+}$ and $G_2=\mathfrak G_{J_{m},1}^{+}$ be the groups defined in Lemma \ref{le:sylowGUandSp}. We note that a matrix $F$ satisfies $F^TJ_2\bar{F}=J_2$ if and only if it is of the form
$\left(
\begin{tabular}[h]{cc}
$1$ & $0$ \\
$a$ &$1$
\end{tabular}
\right)
$ with $a+\bar{a}=0$. Hence there are $q$ different possibilities for $F$. Thus applying Lemma \ref{lem:ordersG+-}
we obtain that the order of $G_1$ is equal to $q^{2m^2-m}$. According to formula (\ref{orderGU(V)}) this is the order of a Sylow $p$-subgroup for  $GU(2m,q^2)$. Similarly we can show that $G_2$ is a Sylow  $p$-subgroup for  $GU(2m+1,q^2)$.
In \cite{Taylor92}, it is proven that up to equivalence there is only one non-degenerate alternating form which can be represented by the matrix (\ref{eq:matrixX}) of Section \ref{sec:sylowdescription} with $X_1=J_{m-1}$, $X_{2}=\left(
\begin{tabular}[h]{cc}
$0$ & $1$\\
$-1$ &$0$
\end{tabular}
\right)$
 and $\epsilon=$ ``$-$". Also,
\begin{equation}\label{Sp(V)order}
|Sp(2m,q)|=q^{m^2}\prod_{i=1}^{m}(q^{2i}-1).
\end{equation}
Let $G$ be the group given in Lemma \ref{le:sylowGUandSp}-3. Then any matrix $F\in U(2,q)$ satisfies $F^TJ_hF=J_h$ and the number of choices for $F$ is $q$.
Since hypothesis (H) is easily checked in this cases, applying Lemma \ref{lem:ordersG+-} shows that $G$ has order $q^{m^2}$. By formula (\ref{Sp(V)order}) this is the order of a Sylow $p$-subgroup and the proof is complete.

The \emph{orthogonal groups} are described as acting on $V=\mathbb{F}_q^n$ with $n\in\{2m,2m+1,2m+2\}$.
We set $v:=(\alpha_1,\cdots,\alpha_n)^T\in V$.

\begin{proposition}\label{prop:allO(V)}
 \begin{enumerate}
    \item[(i)] $O^+(2m,q)$ is the group of invertible matrices preserving the quadratic form
$Q(v)=\sum_{i=1}^{m}\alpha_{2m-i+1}\alpha_i.$
    \item[(ii)]  $O^-(2m+2,q)$ is the group of invertible matrices preserving the quadratic form
$$Q(v)=\sum_{i=1}^{m}\alpha_{2m+2-i+1}\alpha_i+\alpha_{m+1}^2+\alpha_{m+1}\alpha_{m+2}+a\alpha_{m+2}^2,$$
where $a$ is such that $X^2+X+a$ is irreducible in $\mathbb{F}_{q}[X]$.
    \item[(iii)]  $O(2m+1,q)$ is the group of invertible matrices preserving the quadratic form
$Q(v)=\sum_{i=1}^{m}\alpha_{2m+1-i+1}\alpha_i+\alpha_{m+1}^2 .$
\end{enumerate}
\end{proposition}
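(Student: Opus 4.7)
The plan is to reduce the statement to the classification of non-degenerate quadratic forms over $\mathbb{F}_q$ developed in \cite{Taylor92}: every such form admits a Witt decomposition as an orthogonal sum of hyperbolic planes together with an anisotropic kernel of dimension zero (split even case), one (odd case) or two (non-split even case), and the isometry class is determined by the overall dimension together with the isomorphism type of this anisotropic kernel. Thus to prove each part it suffices to exhibit the listed coordinates as a Witt basis realising the decomposition appropriate to the group in question.

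First I would handle part (i): pairing up the basis vectors $e_i$ and $e_{2m-i+1}$ for $i=1,\dots,m$, the form $Q(v)=\sum_{i=1}^m\alpha_{2m-i+1}\alpha_i$ is an orthogonal sum of $m$ hyperbolic planes and has zero anisotropic kernel, so this is the split form that defines $O^+(2m,q)$. For part (iii) the same pairing works on the first $2m$ coordinates and leaves the single coordinate $\alpha_{m+1}$ contributing the anisotropic one-dimensional form $\alpha_{m+1}^2$; by the uniqueness of the odd-dimensional class this represents $O(2m+1,q)$.

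For part (ii) the strategy is identical once one identifies the two-dimensional subspace $\langle e_{m+1},e_{m+2}\rangle$, equipped with the restricted form $q_a(\alpha,\beta):=\alpha^2+\alpha\beta+a\beta^2$, as the anisotropic kernel. The one genuine check is anisotropy of $q_a$: if $(\alpha,\beta)\neq (0,0)$ and $q_a(\alpha,\beta)=0$, then $\beta=0$ would force $\alpha=0$, so $\beta\neq 0$, and dividing by $\beta^2$ exhibits $\alpha/\beta\in\mathbb{F}_q$ as a root of $X^2+X+a$, contradicting irreducibility. Hence $q_a$ is anisotropic of dimension two, and the full form defines $O^-(2m+2,q)$.

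The only real obstacle is matching our explicit labelling (with the ``$J$-shaped'' pairing used throughout the paper) to the abstract Witt basis—purely a bookkeeping issue once the three anisotropic cases (trivial, $\alpha^2$, and $q_a$) are in hand. I would close the proof by observing that these three forms exhaust, up to equivalence, the three families of non-degenerate quadratic forms in the relevant dimensions, so the groups preserving them are indeed (conjugates of) $O^+(2m,q)$, $O^-(2m+2,q)$, and $O(2m+1,q)$ respectively.
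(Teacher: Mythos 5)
Your argument is correct and is essentially the route the paper takes: the paper disposes of this proposition by simply citing \cite{Taylor92}, Chapter 11, i.e.\ the classification of quadratic forms over $\mathbb{F}_q$ by dimension and anisotropic kernel, which is exactly the Witt decomposition you spell out, with the anisotropy of $\alpha^2+\alpha\beta+a\beta^2$ via irreducibility of $X^2+X+a$ being the one genuine check. Two minor points of care worth recording: for odd $q$ and odd dimension there are in fact two isometry classes (distinguished by the square class of the discriminant), but they are proportional forms and hence have the same orthogonal group, which is all the statement needs; and for $q$ even in the odd-dimensional case the polar bilinear form has a radical, so ``non-degenerate'' must be understood for the quadratic form in Taylor's sense rather than for its associated bilinear form.
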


\begin{proof}
See \cite{Taylor92} Chapter 11.
\end{proof}

\begin{remark}\label{rewriteQ}
Define $X_n:=[\alpha_{1} \,\ldots\, \alpha_{n}]$ and let $J_{n}$ be the matrix given by (\ref{eq:Jm}) in
Section \ref{sec:sylowdescription}. Then we can rewrite the quadratic forms associated to each orthogonal group in the following way:
\begin{enumerate}
    \item[(i)]  $Q(v)=X_{m-1}J_{m-1}Y^T+\alpha_{m+1}\alpha_{m}$, where $Y:=[\alpha_{m+2} \,\ldots\, \alpha_{2m}]$, for $O^+(2m,q)$.
    \item[(ii)]  $Q(v)=X_mJ_mY^T+\alpha_{m+1}^2+\alpha_{m+1}\alpha_{m+2}+a\alpha_{m+2}^2$, where $Y:=[\alpha_{m+3} \,\ldots\, \alpha_{2m+2}]$, for $O^-(2m+2,q)$.
    \item[(iii)]  $Q(v)=X_mJ_mY^T+\alpha_{m+1}^2$, where $Y:=[\alpha_{m+2} \,\ldots\, \alpha_{2m+1}]$, for $O(2m+1,q)$.
\end{enumerate}
\end{remark}

The order of each orthogonal group is (see \cite{Taylor92}, pag. 140):
\begin{equation}\label{orderO+}
|O^+(2m,q)|=2q^{m(m-1)}(q^m-1)\prod_{i=1}^{m-1}(q^{2i}-1);
\end{equation}
\begin{equation}\label{orderO-}
|O^-(2m+2,q)|=2q^{m(m+1)}(q^{m+1}+1)\prod_{i=1}^{m}(q^{2i}-1);
\end{equation}
\begin{equation}
|O(2m+1,q)|=
\left\{
\begin{array}{ll}\label{orderO2m+1}
q^{m^2}\prod_{i=1}^{m}(q^{2i}-1) &  q  \textnormal{ even}\\
2q^{m^2}\prod_{i=1}^{m}(q^{2i}-1) & q \textnormal{ odd}
\end{array}
\right.
\end{equation}

\hyperref[le:sylowOrthogonaloddq]{\bf{Proof of Lemma \ref*{le:sylowOrthogonaloddq}:}}
This is similar to the proof of Lemma \ref{le:sylowGUandSp}. Since the characteristic of the field is odd, we obtain a Sylow $p$-subgroup by determining the subgroup of $U(n,q)$ preserving the bilinear form associated to the corresponding quadratic form.
Now, we can choose a basis such that (\ref{eq:matrixX}) in Section \ref{sec:sylowdescription} is the matrix of the bilinear form with:
\begin{itemize}
    \item $X_1=J_{m-1}$, $X_2=J_2$ and $X_3=J_{m-1}$ for $O^+(2m,q)$;
    \item $X_1=J_{m}$, $X_{2}=\left(
\begin{tabular}[h]{cc}
$2$ & $1$\\
$1$ &$2a$
\end{tabular}
\right)$ and $X_3=J_{m}$ for $O^-(2m+2,q)$;
    \item $X_1=J_{m}$, $X_2=[2]$ and $X_3=J_{m}$ for $O(2m+1,q)$.
\end{itemize}
Since $q$ is odd, hypothesis (H) holds and applying Lemma \ref{lem:ordersG+-}, we can show that, in each orthogonal group,  the group $\mathfrak G_{X_1,X_2}^{+}$ has the same order as a Sylow $p$-subgroup. This completes the proof.

\begin{lemma}\label{le:decompSqeven}
For $q$ even and $S\in M(n,q)$ there are unique matrices $S^\prime$ and $C$, with $S^\prime$ symmetric and $C$ upper triangular, such that $S=S^\prime+C$.
\end{lemma}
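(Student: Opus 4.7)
The plan is to construct the decomposition directly by specifying the entries of $C$ (hence of $S'=S-C$) and then derive uniqueness from the observation that the intersection of the two subspaces in question is trivial.

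For existence, writing $S=[s_{ij}]$, I would define
\[
c_{ij}=s_{ij}+s_{ji}\ \ (i<j),\qquad c_{ii}=0,\qquad c_{ij}=0\ \ (i>j),
\]
so that $C$ is (strictly) upper triangular, and set $S':=S+C=S-C$, using $\mathrm{char}(\mathbb{F}_q)=2$. A direct check then gives $(S')_{ij}=s_{ji}$ for $i<j$, $(S')_{ij}=s_{ij}$ for $i>j$ and $(S')_{ii}=s_{ii}$, so that $(S')^T=S'$.

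For uniqueness, I would suppose $S_1'+C_1=S_2'+C_2$ with $S_k'$ symmetric and $C_k$ upper triangular (with zero diagonal), and set $D:=S_1'-S_2'=C_2-C_1$. Being upper triangular with zero diagonal forces every entry of $D$ on or below the diagonal to vanish; since $D$ is also symmetric, the entries above the diagonal then mirror those below and also vanish. Hence $D=0$, giving $S_1'=S_2'$ and $C_1=C_2$.

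The only genuine subtlety, rather than a real obstacle, is the characteristic-$2$ convention: if one allowed $C$ to carry arbitrary diagonal entries, diagonal values could be traded freely between $S'$ and $C$ and uniqueness would fail. Once the diagonal of $C$ is pinned down (here to zero), the statement reduces to a clean dimension count, $\binom{n+1}{2}+\binom{n}{2}=n^2=\dim M(n,q)$, confirming that $M(n,q)$ decomposes as the internal direct sum of the two subspaces.
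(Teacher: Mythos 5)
Your proof is correct and takes essentially the same route as the paper: the identical explicit construction ($c_{ij}=s_{ij}+s_{ji}$ for $i<j$, $c_{ij}=0$ for $i\ge j$, and $S'=S+C$), followed by the same uniqueness argument that a matrix which is both symmetric and strictly upper triangular must vanish. Your remark about pinning the diagonal of $C$ to zero simply makes explicit the convention the paper already uses in its definition of $C$.
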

\begin{proof}
Define the symmetric matrix $S^\prime=[s_{ij}^\prime]$ by $s_{ij}^\prime:=s_{ji}$ for $i\leq j$,
$s_{ij}^\prime:=s_{ij}$ for $i\ge j$ and the upper triangular matrix $C:=[c_{ij}]$ by
$$
c_{ij}=
\left\{
\begin{array}{lr}
s_{ij}+s_{ji} & \textnormal{if } i<j\\
0 &  \textnormal{if } i\geq j
\end{array}
\right.
$$
Then we have $S=S^\prime+C$. The matrices $S^\prime$ and $C$ are unique because
$S=S_1^\prime+C_1=S_2^\prime+C_2$ implies that $S_1^\prime+S_2^\prime=C_1+C_2=0.$
\end{proof}

Let $J_2$ be the matrix $\left(
\begin{tabular}[h]{cc}
$0$ & $1$ \\
$1$ &$0$
\end{tabular}
\right).$

\begin{lemma}\label{lemaux:Oeven}
Let $S\in M(n,q)$ and $B\in M(2\times n,q)$. We also consider the row vectors $X=(\alpha_1,\ldots,\alpha_n)$, $Z=(z_1,z_2)$, $Y=(y_1,y_2)$ whose entries belong $\mathbb{F}_q$. Then
\begin{enumerate}
    \item[(i)] $\displaystyle XSX^T=\sum_{i=1}^ns_{ii}\alpha_i^2+\sum_{i=1}^n\sum_{j=1,i<j }^n (s_{ij}+s_{ji})\alpha_i\alpha_j$.
    \item[(ii)] If $q$ is even, $S+S^T=B^TJ_2B$ and $Z=Y+XB^T$ then
$$z_1z_2=y_1y_2+XB^TJ_2Y^T+XCX^T+\sum_{i=1}^nb_{1i}b_{2i}\alpha_i^2$$
where $C$ is the matrix defined in Lemma \ref{le:decompSqeven}.
\end{enumerate}
\end{lemma}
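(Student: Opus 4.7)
The plan is to handle the two parts separately. Part (i) is a straightforward rewriting of the double sum $XSX^T=\sum_{i,j}s_{ij}\alpha_i\alpha_j$: split into the diagonal contribution $\sum_i s_{ii}\alpha_i^2$ and the off-diagonal contributions, then for $i\neq j$ group the terms with indices $\{i,j\}$ together, using the commutativity $\alpha_i\alpha_j=\alpha_j\alpha_i$ of scalars to obtain the coefficient $(s_{ij}+s_{ji})$ for each unordered pair $i<j$. Note that in part (i) we do \emph{not} need characteristic~$2$; the identity is valid over any commutative ring.

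For part (ii), the approach is to expand $z_1 z_2$ directly from $Z=Y+XB^T$ and then match the three resulting groups of terms. Writing $z_k=y_k+\sum_i b_{ki}\alpha_i$, the expansion gives
\[
z_1 z_2 = y_1 y_2 + y_1\sum_j b_{2j}\alpha_j + y_2\sum_i b_{1i}\alpha_i + \sum_{i,j} b_{1i}b_{2j}\alpha_i\alpha_j.
\]
The first step is to recognize the middle two terms as $XB^T J_2 Y^T$: since $J_2$ swaps the two coordinates of $XB^T$, one has $(XB^T J_2)=(\sum_i b_{2i}\alpha_i,\sum_i b_{1i}\alpha_i)$, and pairing against $Y^T$ yields exactly those linear-in-$Y$ terms.

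The second step is to identify the quadratic-in-$X$ part $\sum_{i,j}b_{1i}b_{2j}\alpha_i\alpha_j$ with $XCX^T+\sum_i b_{1i}b_{2i}\alpha_i^2$. Splitting off the diagonal ($i=j$) contribution $\sum_i b_{1i}b_{2i}\alpha_i^2$, what remains is $\sum_{i<j}(b_{1i}b_{2j}+b_{1j}b_{2i})\alpha_i\alpha_j$. Now I use the hypothesis $S+S^T=B^T J_2 B$: a direct computation shows $(B^T J_2 B)_{ij}=b_{1i}b_{2j}+b_{2i}b_{1j}$, so $s_{ij}+s_{ji}=b_{1i}b_{2j}+b_{2i}b_{1j}$ for all $i,j$. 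Applying part (i) to the upper-triangular matrix $C$ of Lemma~\ref{le:decompSqeven} (where $c_{ii}=0$ and, for $i<j$, $c_{ij}+c_{ji}=c_{ij}=s_{ij}+s_{ji}$) yields $XCX^T=\sum_{i<j}(s_{ij}+s_{ji})\alpha_i\alpha_j$, which matches the expression above. Combining the three pieces gives the claimed identity.

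The only subtle point — and it is the place where the assumption that $q$ is even genuinely enters — is the diagonal bookkeeping: the relation $S+S^T=B^T J_2 B$ constrains only $2s_{ii}=2b_{1i}b_{2i}$, which is automatic in characteristic $2$ and provides no control over $s_{ii}$ versus $b_{1i}b_{2i}$. This is precisely why the extra term $\sum_i b_{1i}b_{2i}\alpha_i^2$ appears separately on the right-hand side instead of being absorbed into $XCX^T$, and why the decomposition $S=S'+C$ of Lemma~\ref{le:decompSqeven} (whose existence also uses characteristic $2$) is the right device to isolate the symmetric ``diagonal defect'' from the off-diagonal part. No further obstacle arises beyond this careful separation of diagonal and off-diagonal contributions.
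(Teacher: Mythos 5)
Your proof is correct and follows essentially the same route as the paper: expand $z_1z_2$ from $Z=Y+XB^T$, identify the linear-in-$Y$ terms with $XB^TJ_2Y^T$, split the quadratic part into the diagonal sum $\sum_i b_{1i}b_{2i}\alpha_i^2$ and the off-diagonal sum, and match the latter with $XCX^T$ via $(B^TJ_2B)_{ij}=b_{1i}b_{2j}+b_{1j}b_{2i}$ and the definition of $C$ in Lemma \ref{le:decompSqeven}. (Your closing aside slightly overstates the role of characteristic $2$ --- the displayed identity itself is characteristic-free once $C$ is given by that formula --- but this does not affect the validity of the argument.)
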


\begin{proof}
The result in $(i)$ is obvious.
Let us prove $(ii)$. It is not hard to check that $B^TJ_2B$ is a symmetric matrix and its entries are\linebreak
$b_{1i}b_{2j}+b_{1j}b_{2i}$, $i,j=1,\ldots,n$.
From $Z=Y+XB^T$ we get
\begin{eqnarray}
\nonumber z_1z_2 &=& \left(y_1+\sum_{i=1}^n b_{1i}\alpha_i\right)\left(y_2+\sum_{j=1}^n b_{2j}\alpha_j\right)\\
\nonumber &=&y_1y_2+XB^TJ_2Y^T+\sum_{i=1}^n\sum_{j=1}^n b_{1i}b_{2j}\alpha_i\alpha_j.
\end{eqnarray}
Since
\begin{eqnarray}
\nonumber \sum_{i=1}^n\sum_{j=1}^n b_{1i}b_{2j}\alpha_i\alpha_j&=&\sum_{i=1}^nb_{1i}b_{2i}\alpha_i^2+ \sum_{i=1}^n\sum_{j=1,j>i}^n (b_{1i}b_{2j}+b_{1j}b_{2i})\alpha_i\alpha_j\\
\nonumber &=&\sum_{i=1}^nb_{1i}b_{2i}\alpha_i^2+XCX^T.
\end{eqnarray}
this completes the proof of $(ii)$.
\end{proof}

\hyperref[le:sylowOrthogonalevenq]{\bf{Proof of Lemma \ref*{le:sylowOrthogonalevenq}:}}
In the even characteristic case we start by determining the subgroup $G$ of $U(n,q)$ preserving the bilinear form associated to the corresponding quadratic form. Then we compute the subgroup $G_1$ of $G$ whose elements preserve the quadratic form. For $O(2m+1,q)$, $G_1$ will be a Sylow $p$-subgroup. However, for the groups  $O^+(2m,q)$ and $O^-(2m+2,q)$ this is not the case and we shall need an additional element of order $2$ to  obtain a Sylow $p$-subgroup.

We start with the orthogonal group $O^+(2m,q)$. The matrix of the corresponding bilinear form can be represented as the matrix (\ref{eq:matrixX}) in Section \ref{sec:sylowdescription} with $X_1=J_{m-1}$, $X_2=J_2$ and $\epsilon=$ ``$+$".
Now we determine which elements $M$ in $\mathfrak G_{J_{m-1},J_{2}}^{+}$ satisfy $Q(Mv)=Q(v)$. By Remark \ref{rewriteQ} the quadratic form is $Q(v)=XJ_{m-1}Y^T+\alpha_{m+1}\alpha_{m}$ with $v=[X\;\;Z\;\;Y]^T$ and $Z:=[\alpha_{m}\;\;\alpha_{m+1}]$.
If $N$ is any element of $G$, then $Nv=[X^\prime\;\;Z^\prime\;\;Y^\prime]^T$ where
\[
\left\{
\begin{array}{l}
X^\prime=XA^T\\
Z^\prime=Z+XB^T\\
Y^\prime=XS^TA^{-1}J_{m-1}+ZD^T+YJ_{m-1}A^{-1}J_{m-1}
\end{array}
\right.
\]
Hence,
\begin{eqnarray}
\nonumber Q(Nv)&=&X^\prime J_{m-1}(Y^\prime)^T+\alpha_{m+1}^\prime\alpha_{m}^\prime\\
 &=&XJ_{m-1}Y^T+XSX^T+XB^TJ_2Z^T+\alpha_{m+1}^\prime\alpha_{m}^\prime.\label{proofO+even1}
\end{eqnarray}
Applying Lemma \ref{lemaux:Oeven} we get
$XSX^T=\sum_{i=1}^{m-1} s_{ii}\alpha_i^2+XCX^T,$ with $C$ from
Lemma \ref{le:decompSqeven}, and
$$\alpha_{m+1}^\prime\alpha_{m}^\prime=\alpha_{m+1}\alpha_{m}+XB^TJ_2Z^T+XCX^T+
\sum_{i=1}^{m-1}b_{1i}b_{2i}\alpha_i^2.$$
If we substitute these expressions in (\ref{proofO+even1}) we obtain
$Q(Nv)=\sum_{i=1}^{m-1}(s_{ii}+b_{1i}b_{2i})\alpha_i^2+Q(v)$ for all $v$, hence
$s_{ii}=b_{1i}b_{2i}$ for $i=1,\cdots,m-1$.
Hence $N$ belongs to $G_1$. It is not hard to check that all the matrices in $G_1$ preserve the quadratic form. Hence $G_1$ is the stabilizer subgroup of $Q$ in $\mathfrak G_{J_{m-1},J_{2}}^{+}$.
Since for elements of $G_1$, the entries $s_{ii}$ are defined by the matrix $B$, the proof of
Lemma \ref{lem:numberS} shows that $|G_1|= q^{m(m-1)}$.
\\
Now, we claim that $L$ (see (\ref{matrixLL1}) in Section \ref{sec:sylowdescription}) normalises the group $G_1$. So let $N\in G_1$. The product $LNL$ only changes the matrices $B$ and $D$ in $N$ to $B^\prime=J_2B$ and $D^\prime=DJ_2$, respectively. A straightforward calculation shows that $S+S^T=(B^\prime)^TJ_2B^\prime$ and $D^\prime=-J_{m-1}(A^{-1})^{T}(B^\prime)^TJ_2$. Hence $LNL\in G_1$ and this proves our claim. The order of the group generated by $G_1$ and $L$ is therefore $2q^{m(m-1)}$, which by formula (\ref{orderO+}) is the same as the order of a Sylow $p$-subgroup. This completes the proof of Lemma  \ref{le:sylowOrthogonalevenq}.2.
\\[1mm]
For $O^-(2m+2,q)$ we repeat similar steps as in the previous case. The quadratic form is
$Q(v)=XJ_{m}Y^T+ \alpha_{m+1}^2+\alpha_{m+1}\alpha_{m+2}+a\alpha_{m+2}^2$ with $v=[X\;\;Z\;\;Y]^T$ and $Z:=[\alpha_{m+1}\;\;\alpha_{m+2}]$.
Now the matrix of the bilinear form is the matrix (\ref{eq:matrixX},Section \ref{sec:sylowdescription}) with $X_1=J_{m}$, $X_2=J_2$ and $\epsilon=$ ``$+$". We get for an element  $N$ in $\mathfrak G_{J_{m},J_{2}}^{+}$ that
$$Q(Nv)=\sum_{i=1}^m(s_{ii}+b_{1i}^2+b_{1i}b_{2i}+ab_{2i}^2)\alpha_i^2+Q(v)$$
and therefore $N$ preserves the quadratic form if and only if $N$ is an element of  $G_1$. Thus $G_1$ is subgroup of order $q^{m(m+1)}$.

To prove that $L_1$ (see (\ref{matrixLL1})) normalises $G_1$ we just repeat the same argument as above and we use the fact that $(J_2^\prime)^TJ_2J_2^\prime=J_2$.
Hence the group generated by $G_1$ and $L_1$ has order $2q^{m(m+1)}$ which is actually the order of a Sylow $p$-subgroup for $O^-(2m+2,q)$ with $q$ even (see formula (\ref{orderO-})). This completes the proof of Lemma  \ref{le:sylowOrthogonalevenq}.3.

Finally, in $O(2m+1,q)$ the matrix of the bilinear form is the matrix (\ref{eq:matrixX}), Section \ref{sec:sylowdescription}) with $X_1=J_{m}$, $X_2=0$ and $\epsilon=$ ``$+$".
Remark \ref{rewriteQ} shows that $Q(v)=XJ_{m}Y^T+\alpha_{m+1}^2$ with $v=[X\;\;\alpha_{m+1}\;\;Y]^T$.
Hence, we obtain for an element $N$ in $\mathfrak G_{J_{m},0}^{+}$ that
$$Q(Nv)=\sum_{i=1}^m(s_{ii}+b_{1i}^2)\alpha_i^2+Q(v).$$
Hence $N$ preserves the quadratic form if and only if $N$ belongs to $G_1$. From this we can conclude that $G_1$ is a subgroup of order $q^{m^2}$, which implies that $G_1$ is a Sylow $p$-subgroup for $O(2m+1,q)$ with $q$ even (see formula (\ref{orderO2m+1})). This completes the proof  of Lemma  \ref{le:sylowOrthogonalevenq}.1.

\section{Proofs for section 4}

In this section we present the proofs of Lemmas \ref{cor:StOpOmegaGammaLamba}, \ref{cor:psilOmegaGammaDeltaalgebramemb}, \ref{le:phijingeneral} and Proposition \ref{prop:minimaldegrees}.

\begin{proposition}\label{prop:P(-1)omegagammadelta}
Let  $\Omega_{s,j}$, $\Gamma_{s,\lambda}$ and $\Delta_{s,\lambda}$ be the polynomials defined above. Then:
\begin{enumerate}
    \item $\mathcal{P}^\bullet(\Omega_{0,1})=\Omega_{0,1}^r-\Omega_{1,1}+\Omega_{0,1}$;
    \item $\mathcal{P}^\bullet(\Omega_{1,1})=\Omega_{1,1}^r-\Omega_{2,1}-2\Omega_{0,1}^r+\Omega_{1,1}$;
    \item $\mathcal{P}^\bullet(\Omega_{s,j})=\Omega_{s,j}^r-\Omega_{s+1,j}-\Omega_{s-1,j}^r+\Omega_{s,j}$ for  $s>0$ if $j=-1$ and $s>1$ if $j=1$;
    \item $\mathcal{P}^\bullet(\Gamma_{0,\lambda})=\Gamma_{0,\lambda}^r-\Gamma_{1,\lambda}+\Gamma_{0,\lambda}$;
    \item $\mathcal{P}^\bullet(\Gamma_{1,\lambda})=\Gamma_{1,\lambda}^r-\Gamma_{2,\lambda}-2\Gamma_{0,\lambda}^r+
\Gamma_{1,\lambda}$;
    \item $\mathcal{P}^\bullet(\Gamma_{s,\lambda})=\Gamma_{s,\lambda}^r-\Gamma_{s+1,\lambda}-
\Gamma_{s-1,\lambda}^r+\Gamma_{s,\lambda}$ for $s>1$;
    \item $\mathcal{P}^\bullet(\Lambda_{1,\lambda})=\Lambda_{1,\lambda}^{q^2}-\Lambda_{2,\lambda}-\Lambda_{1,\lambda}^q+\Lambda_{1,\lambda}$;
    \item $\mathcal{P}^\bullet(\Lambda_{s,\lambda})=\Lambda_{s,\lambda}^{q^2}-\Lambda_{s+1,\lambda}-
\Lambda_{s-1,\lambda}^{q^2}+\Lambda_{s,\lambda}$ for $s\geq 2$.
\end{enumerate}
\end{proposition}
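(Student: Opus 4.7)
The plan is to exploit the fact that $\mathcal{P}^\bullet$ is an $\mathbb{F}$-algebra endomorphism of $A$ (not merely $\mathbb{F}$-linear) with $\mathcal{P}^\bullet(x_i) = x_i - x_i^r$. Everything then reduces to expanding a product of two binomials on each monomial of $\Omega_{s,j}$, $\Gamma_{s,\lambda}$ or $\Lambda_{s,\lambda}$ and regrouping the four resulting terms into the four polynomials that appear on the right-hand side.

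I would start with $\Omega_{s,j}$ for $s\geq 1$. For each summand $x_{n-i+1}^{r^s}x_i$ one gets
\[
(x_{n-i+1}^{r^s}-x_{n-i+1}^{r^{s+1}})(x_i-x_i^r),
\]
which is a signed sum of four monomials with exponent pairs $(r^s,1)$, $(r^s,r)$, $(r^{s+1},1)$, $(r^{s+1},r)$; analogously for the $j\,x_{n-i+1}x_i^{r^s}$ summand. Summing over $i$, the $(r^s,1)$-monomials reassemble into $\Omega_{s,j}$; the $(r^{s+1},1)$-monomials into $\Omega_{s+1,j}$; the $(r^{s+1},r)$-monomials into $\Omega_{s,j}^r$ (using that raising to the $r$-th power multiplies exponents by $r$ and that $j^r=j$ since $j\in\{\pm 1\}$); and the $(r^s,r)$-monomials, being $r$-th powers of $(r^{s-1},1)$-monomials, into $\Omega_{s-1,j}^r$. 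This gives the formula in part (3). The case $s=1$, $j=1$ is special because both families of $(r,r)$-monomials coincide, producing the factor $2\Omega_{0,1}^r$ of part (2); the case $s=1$, $j=-1$ is automatic once one sets $\Omega_{0,-1}=0$. For $\Omega_{0,1}$ the expansion only has three surviving exponent classes, giving part (1).

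Next, $\Gamma_{s,\lambda}$ differs from $\Omega_{s,1}$ by the extra summand $2\cdot 1_{\mathbb F}(x_{m+1}^{r^s+1}+\lambda x_{m+2}^{r^s+1})$ (for $s\geq 1$; for $s=0$ it is $x_{m+1}^2+\lambda x_{m+2}^2$). Applying $\mathcal{P}^\bullet$ to $x^{r^s+1}$ yields
\[
(x^{r^s}-x^{r^{s+1}})(x-x^r)=x^{r^s+1}-x^{r^s+r}-x^{r^{s+1}+1}+x^{r^{s+1}+r},
\]
and the four exponents $r^s+1$, $r^s+r$, $r^{s+1}+1$, $r^{s+1}+r$ are precisely those occurring in $\Gamma_{s,\lambda}$, $\Gamma_{s-1,\lambda}^r$, $\Gamma_{s+1,\lambda}$, $\Gamma_{s,\lambda}^r$ respectively (using $\lambda^r=\lambda$). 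Adding this to the $\Omega_{s,1}$-calculation above yields parts (4)--(6); in characteristic $2$ the factor $2\cdot 1_{\mathbb F}$ vanishes, so $\Gamma_{s,\lambda}=\Omega_{s,1}$ for $s\geq 1$ and consistency is automatic. The boundary cases $\Gamma_{0,\lambda}$ and $\Gamma_{1,\lambda}$ match the boundary cases of $\Omega_{0,1}$ and $\Omega_{1,1}$, picking up the same factor of $2$ in part (5).

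For $\Lambda_{s,\lambda}$, the identical bookkeeping works with $r=q^2$ and with $r^s$ replaced by $q^{2s-1}$; the only new observation is that $\lambda\in\mathbb{F}_q$ (so that $h_1=\Lambda_{1,1}$ defines the unitary form) and hence $\lambda^{q^2}=\lambda$, ensuring that the $x_{m+1}^{q^{2s-1}+1}$ piece assembles correctly into $q^2$-th powers. The case $s\geq 2$ gives part (8), while $s=1$ gives part (7) under the convention $\Lambda_{0,\lambda}=0$. The main obstacle throughout is purely the bookkeeping: one must carefully verify, for each of the four exponent classes produced by $\mathcal{P}^\bullet$, that the monomials reassemble into the correct shift $s\pm 1$ and the correct $r$-th (or $q^2$-th) power, and track the small-$s$ degenerations where two classes collide or one is empty. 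Once the four-class sorting is formalized, each of the eight formulas follows by inspection.
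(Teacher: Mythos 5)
Your proposal is correct and follows essentially the same route as the paper: both treat $\mathcal{P}^\bullet$ as the $\mathbb{F}$-algebra endomorphism $x_i\mapsto x_i-x_i^r$, expand the resulting products of binomials summand by summand, regroup the four exponent classes into $\Omega_{s\pm 1}$ and $r$-th powers (with the $s=1$, $j=1$ collision producing the factor $2$), and handle $\Gamma_{s,\lambda}$ by splitting off the extra term $x_{m+1}^{r^s+1}+\lambda x_{m+2}^{r^s+1}$ exactly as the paper does with its $f_{s,\lambda}$, with the $\Lambda$ case being the same bookkeeping for $r=q^2$.
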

\begin{proof}
Applying $\mathcal{P}^\bullet$ to $\Omega_{0,1}$ we obtain
\begin{eqnarray}
\nonumber \mathcal{P}^\bullet(\Omega_{0,1})&=&\sum_{i=1}^m\mathcal{P}^\bullet(x_{n-i+1})\mathcal{P}^\bullet(x_i)= \sum_{i=1}^m(x_{n-i+1}-x_{n-i+1}^r)(x_{i}-x_{i}^r)\\
\nonumber &=&\Omega_{0,1}-\Omega_{1,1}+\Omega_{0,1}^r
\end{eqnarray}
and 1 is proved. Now
\begin{eqnarray}
\nonumber &&\mathcal{P}^\bullet(\Omega_{s,j})=\sum_{i=1}^m(\mathcal{P}^\bullet(x_{n-i+1})^{r^s}\mathcal{P}^\bullet(x_i)
+j\mathcal{P}^\bullet(x_{n-i+1})\mathcal{P}^\bullet(x_i)^{r^s})\\
\nonumber &=&\sum_{i=1}^m((x_{n-i+1}^{r^s}-x_{n-i+1}^{r^{s+1}})(x_{i}-x_{i}^r)+j(x_{n-i+1}-x_{n-i+1}^r)(x_{i}^{r^s}-x_{i}^{r^{s+1}}))
\end{eqnarray}
and from this 2 and 3 follow.
Before proving $4$, 5 and $6$ note that by taking
$f_{s,\lambda}:=x_{m+1}^{r^{s}+1}+\lambda\,x_{m+2}^{r^{s}+1}$
we can write
$\Gamma_{0,\lambda}=\Omega_{0,1}+f_{0,\lambda},\quad \Gamma_{s,\lambda}=\Omega_{s,1}+2f_{s,\lambda}.$
Thus,
$$ \mathcal{P}^\bullet(\Gamma_{0,\lambda})=\mathcal{P}^\bullet(\Omega_{0,1})
+\mathcal{P}^\bullet(f_{0,\lambda}),\quad
\mathcal{P}^\bullet(\Gamma_{s,\lambda})=\mathcal{P}^\bullet(\Omega_{s,1})+
2\mathcal{P}^\bullet(f_{s,\lambda})$$
and therefore we just need to determine how $\mathcal{P}^\bullet$ acts on the polynomials $f_{s,\lambda}$. Following the same reasoning as in the beginning of the proof, we can show that
\begin{eqnarray}
\nonumber \mathcal{P}^\bullet(f_{0,\lambda})&=&f_{0,\lambda}^r-2f_{1,\lambda}+f_{0,\lambda};\\
\nonumber \mathcal{P}^\bullet(f_{s,\lambda})&=&f_{s,\lambda}^r-f_{s+1,\lambda}-
f_{s-1,\lambda}^r+f_{s,\lambda} \quad \textnormal{ for }   s>0.
\end{eqnarray}
Combining this with the results in 1, 2 and 3 we get 4, 5 and 6. Now we prove 7. Since in this case $\mathbb{F}=\mathbb{F}_{q^2}$, we have $r=q^2$ and so
\begin{eqnarray}
\nonumber \mathcal{P}^\bullet(\Lambda_{1,\lambda})&=&\sum_{i=1}^m(\mathcal{P}^\bullet(x_{n-i+1})^{q}\mathcal{P}^\bullet(x_i)
+\mathcal{P}^\bullet(x_{n-i+1})\mathcal{P}^\bullet(x_i)^{q})+\lambda \mathcal{P}^\bullet(x_{m+1}^{q+1})\\
\nonumber &=&\sum_{i=1}^m((x_{n-i+1}^{q}-x_{n-i+1}^{q^3})(x_{i}-x_{i}^{q^2})+(x_{n-i+1}-x_{n-i+1}^{q^2})(x_{i}^{q}-x_{i}^{q^3}))\\
\nonumber &+&\lambda (x_{m+1}^q-x_{m+1}^{q^3})(x_{m+1}-x_{m+1}^{q^2})\\
\nonumber &=&\Lambda_{1,\lambda}^{q^2}-\Lambda_{2,\lambda}-\Lambda_{1,\lambda}^q+\Lambda_{1,\lambda}.
\end{eqnarray}
A similar calculation proves 8.
\end{proof}

\hyperref[cor:StOpOmegaGammaLamba]{\bf{Proof of Lemma \ref*{cor:StOpOmegaGammaLamba}:}}
We will prove the result only for the polynomials $\Omega_{s,j}$.
For a homogeneous polynomial $f$ such that the $i$-th steenrod operation $\mathcal{P}^i(f)\neq 0$, we obtain ${\rm deg}(\mathcal{P}^i(f))={\rm deg}(f)+i(r-1)$. Thus, we just need to consider the degrees of the terms in $\mathcal{P}^\bullet(\Omega_{0,1})$ and $\mathcal{P}^\bullet(\Omega_{s,j})$. We have
$$\mathcal{P}^\bullet(\Omega_{0,1})=\mathcal{P}^0(\Omega_{0,1})-\mathcal{P}^1(\Omega_{0,1})+\mathcal{P}^2(\Omega_{0,1})
-\mathcal{P}^3(\Omega_{0,1})+\cdots=\Omega_{0,1}^r-\Omega_{1,1}+\Omega_{0,1}$$
by Proposition \ref{prop:P(-1)omegagammadelta}.
The degrees of $\Omega_{0,1}$, $\Omega_{1,1}$ and $\Omega_{0,1}^r$ are $2$, $r+1$ and $2r$, respectively. Comparing this with the degrees of  $\mathcal{P}^i(\Omega_{0,1})$, we get $\mathcal{P}^0(\Omega_{0,1})=\Omega_{0,1}$, $\mathcal{P}^1(\Omega_{0,1})=\Omega_{1,1}$ and $\mathcal{P}^2(\Omega_{0,1})=\Omega_{0,1}^r$.
Again by Proposition \ref{prop:P(-1)omegagammadelta} we get for $s>0$,
\begin{eqnarray}
\nonumber \mathcal{P}^\bullet(\Omega_{1,1})&=&\mathcal{P}^0(\Omega_{1,1})-\mathcal{P}^1(\Omega_{1,1})+\mathcal{P}^2(\Omega_{1,1})
-\cdots\\
\nonumber &=&\Omega_{1,1}^r-\Omega_{2,1}-2\Omega_{0,1}^r+\Omega_{1,1},\\
\nonumber \mathcal{P}^\bullet(\Omega_{s,j})&=& \Omega_{s,j}^r-\Omega_{s+1,j}-\Omega_{s-1,j}^r+\Omega_{s,j}.
\end{eqnarray}
Hence
\begin{itemize}
    \item $\mathcal{P}^0(\Omega_{s,j})=\Omega_{s,j}$;
    \item ${\rm deg}\,\Omega_{s-1,j}^r=r(r^{s-1}+1)$, ${\rm deg}\,\mathcal{P}^{1}(\Omega_{s,j})=r^s+1+r-1$ and therefore $\mathcal{P}^1(\Omega_{1,1})=2\Omega_{0,1}^r$ and  $\mathcal{P}^1(\Omega_{s,j})=\Omega_{s-1,j}^r$;
    \item ${\rm deg}\,\Omega_{s+1,j}=r^{s+1}+1$, ${\rm deg}\mathcal{P}^{r^s}(\Omega_{s,j})=r^s+1+r^s(r-1)$ and therefore $\mathcal{P}^{r^s}(\Omega_{s,j})=\Omega_{s+1,j}$;
    \item ${\rm deg}\,\Omega_{s,j}^r=r(r^{s}+1)$, ${\rm deg}\mathcal{P}^{r^s+1}(\Omega_{s,j})=r^s+1+(r^s+1)(r-1)$ and therefore $\mathcal{P}^{r^s+1}(\Omega_{s,j})=\Omega_{s,j}^r$;
\end{itemize}
Similar arguments prove the remaining results in the lemma.

The next proposition shows how the $\mathbb{F}$-algebra homomorphism $\psi_l$ acts on the polynomials $\Omega_{s,j}$, $\Gamma_{s,\lambda}$ and $\Lambda_{s,\lambda}$.
\begin{proposition}\label{psilOmegaGammaDelta}
For every $l\geq 1$, the following is true:
\begin{enumerate}
    \item $\psi_l(\Omega_{0,1})=\psi_{l-1}(\Omega_{0,1})^r-\psi_{l-1}(x_l)^{r-1}\psi_{l-1}(\Omega_{1,1})
+\psi_{l-1}(x_l)^{2(r-1)}\psi_{l-1}(\Omega_{0,1})$;
    \item $\psi_l(\Omega_{1,1})=\psi_{l-1}(\Omega_{1,1})^r-\psi_{l-1}(x_l)^{r-1}\psi_{l-1}(\Omega_{2,1})-
2\psi_{l-1}(x_l)^{r(r-1)}\psi_{l-1}(\Omega_{0,1})^r+\newline
\psi_{l-1}(x_l)^{(r+1)(r-1)}\psi_{l-1}(\Omega_{1,1})$;
    \item $\psi_l(\Omega_{s,j})=\psi_{l-1}(\Omega_{s,j})^r-\psi_{l-1}(x_l)^{r-1}\psi_{l-1}(\Omega_{s+1,j})-
\psi_{l-1}(x_l)^{r^s(r-1)}\psi_{l-1}(\Omega_{s-1,j})^r+\psi_{l-1}(x_l)^{(r^s+1)(r-1)}\psi_{l-1}(\Omega_{s,j})$ for  $s>0$ if $j=-1$ and $s>1$ if $j=1$;
    \item $\psi_l(\Gamma_{0,\lambda})=\psi_{l-1}(\Gamma_{0,\lambda})^r-\psi_{l-1}(x_l)^{r-1}
\psi_{l-1}(\Gamma_{1,\lambda})+\psi_{l-1}(x_l)^{2(r-1)}\psi_{l-1}(\Gamma_{0,\lambda})$;
    \item $\psi_l(\Gamma_{1,\lambda})=\psi_{l-1}(\Gamma_{1,\lambda})^r-\psi_{l-1}(x_l)^{r-1}
\psi_{l-1}(\Gamma_{2,\lambda})-2\psi_{l-1}(x_l)^{r(r-1)}\psi_{l-1}(\Gamma_{0,\lambda})^r+\newline
\psi_{l-1}(x_l)^{(r+1)(r-1)}\psi_{l-1}(\Gamma_{1,\lambda})$;
    \item $\psi_l(\Gamma_{s,\lambda})=\psi_{l-1}(\Gamma_{s,\lambda})^r-\psi_{l-1}(x_l)^{r-1}
\psi_{l-1}(\Gamma_{s+1,\lambda})-\psi_{l-1}(x_l)^{r^s(r-1)}\psi_{l-1}(\Gamma_{s-1,\lambda})^r+
\psi_{l-1}(x_l)^{(r^s+1)(r-1)}\psi_{l-1}(\Gamma_{s,\lambda})$ for $s\geq 2$;
    \item $\psi_l(\Lambda_{1,\lambda})=\psi_{l-1}(\Lambda_{1,\lambda})^{q^2}-\psi_{l-1}(x_l)^{q^2-1}
\psi_{l-1}(\Lambda_{2,\lambda})-\psi_{l-1}(x_l)^{q^3-q}\psi_{l-1}(\Lambda_{1,\lambda})^q\newline+
\psi_{l-1}(x_l)^{q^3+q^2-q-1}\psi_{l-1}(\Lambda_{1,\lambda})$;
    \item $\psi_l(\Lambda_{s,\lambda})=\psi_{l-1}(\Lambda_{s,\lambda})^{q^2}-\psi_{l-1}(x_l)^{q^2-1}
\psi_{l-1}(\Lambda_{s+1,\lambda})-\\
\psi_{l-1}(x_l)^{q^{2s-1}(q^2-1)}\psi_{l-1}(\Lambda_{s-1,\lambda})^{q^2}+
\psi_{l-1}(x_l)^{(q^{2s-1}+1)(q^2-1)}\psi_{l-1}(\Lambda_{s,\lambda})$  for $s\geq 2$.
\end{enumerate}
\end{proposition}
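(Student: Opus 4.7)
The plan is to exploit two facts uniformly: first, each $\psi_l$ is an $\mathbb{F}$-algebra endomorphism, so it commutes with products and with the Frobenius power $f\mapsto f^{r^s}$ (since we are in characteristic $p$ and $r$ is a power of $p$); second, Proposition~\ref{prop:psil}(3) gives the recursion
\[
\psi_l(f)=\psi_{l-1}(f)^r-\psi_{l-1}(x_l)^{r-1}\psi_{l-1}(f)
\]
whenever $f$ is an $\mathbb{F}$-linear combination of the $x_i$. Setting $A_i:=\psi_{l-1}(x_i)$ and $B:=\psi_{l-1}(x_l)^{r-1}$, this reads $\psi_l(x_i)=A_i^r-BA_i$, and hence $\psi_l(x_i^{r^s})=\psi_l(x_i)^{r^s}=A_i^{r^{s+1}}-B^{r^s}A_i^{r^s}$.

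The computation is then purely mechanical. For each monomial $x_{n-i+1}^{r^s}x_i$ appearing in $\Omega_{s,j}$, I expand
\[
\psi_l\bigl(x_{n-i+1}^{r^s}x_i\bigr)=\bigl(A_{n-i+1}^{r^{s+1}}-B^{r^s}A_{n-i+1}^{r^s}\bigr)\bigl(A_i^r-BA_i\bigr)
\]
into four terms; the three ``mixed'' terms involve $B$, $B^{r^s}$ and $B^{r^s+1}$, and the remaining pure term is $A_{n-i+1}^{r^{s+1}}A_i^r$. Adding the symmetric contribution from $jx_{n-i+1}x_i^{r^s}$, then summing over $i$, each of the four sums is identifiable with a $\psi_{l-1}$-image: the pure term sums to $\psi_{l-1}(\Omega_{s,j})^r$ (using that Frobenius commutes with $\psi_{l-1}$ and that $j^r=j$), the two cross terms collapse to $B\psi_{l-1}(\Omega_{s+1,j})$ and $B^{r^s}\psi_{l-1}(\Omega_{s-1,j})^r$, and the square term is $B^{r^s+1}\psi_{l-1}(\Omega_{s,j})$. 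Substituting back $B^{r^s}=\psi_{l-1}(x_l)^{r^s(r-1)}$ and $B^{r^s+1}=\psi_{l-1}(x_l)^{(r^s+1)(r-1)}$ yields the formulas (1)--(3). The assertions (4)--(6) for $\Gamma_{s,\lambda}$ reduce to the same calculation applied to $\Omega_{s,1}$ plus the analogous (easier) single-variable argument for $x_{m+1}^{r^s+1}+\lambda x_{m+2}^{r^s+1}$, which produces formally the same recursion. The unitary case (7)--(8) is formally identical with the substitutions $r\leadsto q^2$ for the Frobenius and $r^s\leadsto q^{2s-1}$ for the exponent.

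The only nonroutine point, and the place where one must be careful, is the degenerate low-index behaviour that explains why the statement separates out $s=0$ (items (1),(4)) and $s=1$ (items (2),(5),(7)) from the ``generic'' cases $s\geq 2$. This comes from the convention $\Omega_{0,1}=\sum x_{n-i+1}x_i$ (not doubled) and $\Omega_{0,-1}=0$, together with $\Lambda_{0,\lambda}=0$: when $s=1$ and $j=1$ the cross-term $B^{r}\sum_i(A_{n-i+1}^rA_i+A_{n-i+1}A_i^r)$ collapses to $2B^{r}\psi_{l-1}(\Omega_{0,1})^r$, producing the coefficient $-2$ in (2) and (5); when $s=0$ the $\psi_{l-1}(\Omega_{-1,1})^r$-term simply disappears, giving the shorter formulas (1) and (4). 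Once these boundary bookkeeping issues are handled, the proof is a direct substitution and collection of terms, so the main obstacle is purely notational: keeping track of the four-term expansion and the Frobenius-shifted exponents of $B$ in a way that makes the pattern $s\mapsto s,\,s+1,\,s-1,\,s$ transparent.
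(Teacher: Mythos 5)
Your proposal is correct and follows essentially the same route as the paper: apply the recursion $\psi_l(x_i)=\psi_{l-1}(x_i)^r-\psi_{l-1}(x_l)^{r-1}\psi_{l-1}(x_i)$ from Proposition~\ref{prop:psil}(3) to each variable, expand the resulting products term by term, and regroup the four types of terms as $\psi_{l-1}$-images of $\Omega_{s,j}$, $\Omega_{s\pm 1,j}$ (and likewise for $\Gamma$, $\Lambda$), with the same boundary bookkeeping at $s=0,1$ that produces the coefficient $-2$ and the shortened formulas. The only cosmetic difference is that you write $B=\psi_{l-1}(x_l)^{r-1}$ where the paper abbreviates $T=\psi_{l-1}(x_l)$, and you spell out the $\Gamma$- and $\Lambda$-cases a bit more explicitly than the paper, which treats them as ``similar calculations.''
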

\begin{proof}
We only prove 1, 2 and 3. All the other statements can be proved by similar calculations. But we should remember that when we consider the polynomials $\Lambda_{s,\lambda}$, $r$ is equal to $q^2$.
According to Proposition \ref{prop:psil}-3, the $\mathbb{F}$-algebra homomorphism $\psi_l$ satisfies $\psi_l(x_i)=\psi_{l-1}(x_i)^r-\psi_{l-1}(x_l)^{r-1}\psi_{l-1}(x_i)$ for all $i$. For simplicity, let $T=\psi_{l-1}(x_l)$. Then $\psi_l(x_i)=\psi_{l-1}(x_i)^r-T^{r-1}\psi_{l-1}(x_i)$ and
\begin{eqnarray}
\nonumber \psi_l(\Omega_{0,1})&=&\sum_{i=1}^m\psi_l(x_{n-i+1})\psi_l(x_i)\\
\nonumber &=&\sum_{i=1}^m(\psi_{l-1}(x_{n-i+1})^r-T^{r-1}\psi_{l-1}(x_{n-i+1}))(\psi_{l-1}(x_i)^r-T^{r-1}\psi_{l-1}(x_i))\\
\nonumber &=&\psi_{l-1}(\Omega_{0,1})^r-T^{r-1}\psi_{l-1}(\Omega_{1,1})
+T^{2(r-1)}\psi_{l-1}(\Omega_{0,1})
\end{eqnarray}
which proves 1. Since
\begin{eqnarray}
\nonumber &&\psi_l(\Omega_{s,j})=\sum_{i=1}^m(\psi_l(x_{n-i+1})^{r^s}\psi_l(x_i)
+j\psi_l(x_{n-i+1})\psi_l(x_i)^{r^s})\\
\nonumber &=&\sum_{i=1}^m(\psi_{l-1}(x_{n-i+1})^{r^{s+1}}-T^{r^s(r-1)}\psi_{l-1}(x_{n-i+1})^{r^s})
(\psi_{l-1}(x_i)^r-T^{r-1}\psi_{l-1}(x_i))\\
\nonumber &+& j\sum_{i=1}^m(\psi_{l-1}(x_{n-i+1})^r-T^{r-1}\psi_{l-1}(x_{n-i+1}))
(\psi_{l-1}(x_{i})^{r^{s+1}}-T^{r^s(r-1)}\psi_{l-1}(x_{i})^{r^s}),
\end{eqnarray}
2 and 3 follow easily.
\end{proof}

\hyperref[cor:psilOmegaGammaDeltaalgebramemb]{\bf{Proof of Lemma \ref*{cor:psilOmegaGammaDeltaalgebramemb}:}}
We only prove the statement in $1$. We do this by induction on $l$. First we consider $\Omega_{s,j}$ with $s>0$ if $j=-1$ and $s>1$ if $j=1$.
For $l=1$, it follows from Proposition \ref{psilOmegaGammaDelta}-3 that
$$\psi_1(\Omega_{s,j})=\Omega_{s,j}^r-x_1^{r-1}\Omega_{s+1,j}-x_1^{r^s(r-1)}\Omega_{s-1,j}^r+x_1^{(r^s+1)(r-1)}\Omega_{s,j}$$
belongs to $\mathbb{F}[x_1,\Omega_{s-1,j},\Omega_{s,j},\Omega_{s+1,j}]$
Now, assume that the result is true for $l-1$.
Again from Proposition \ref{psilOmegaGammaDelta}-3 we get
\begin{eqnarray}
\nonumber \psi_l(\Omega_{s,j})&=&\psi_{l-1}(\Omega_{s,j})^r-\psi_{l-1}(x_l)^{r-1}\psi_{l-1}(\Omega_{s+1,j})- \psi_{l-1}(x_l)^{r^s(r-1)}\psi_{l-1}(\Omega_{s-1,j})^r\\
\nonumber &+& \psi_{l-1}(x_l)^{(r^s+1)(r-1)}\psi_{l-1}(\Omega_{s,j}).
\end{eqnarray}
By induction we have

$\psi_{l-1}(\Omega_{s,j})\in \mathbb{F}[x_1,\psi_1(x_2),\ldots,\psi_{l-2}(x_{l-1}),\Omega_{s-1,j},\Omega_{s,j},\Omega_{s+1,j},\ldots,\Omega_{s+l-1,j}]$;\\
$\psi_{l-1}(\Omega_{s+1,j})\in \mathbb{F}[x_1,\psi_1(x_2),\ldots,\psi_{l-2}(x_{l-1}),\Omega_{s,j},\Omega_{s+1,j},\ldots,\Omega_{s+l,j}]$;\\
$\psi_{l-1}(\Omega_{s-1,j})\in \mathbb{F}[x_1,\psi_1(x_2),\ldots,\psi_{l-2}(x_{l-1}),\Omega_{s-2,j},\Omega_{s-1,j},\Omega_{s,j},\ldots,\Omega_{s+l-2,j}]$ if  $s-1>0$. The arguments for $s=0$ and $s=1$ are similar.
Hence,
$$\psi_l(\Omega_{s,j})\in \mathbb{F}[x_1,\psi_1(x_2),\ldots,\psi_{l-1}(x_l),\Omega_{s-1,j},\Omega_{s,j},\Omega_{s+1,j},\ldots,\Omega_{s+l,j}].$$
Now, if in the previous argument, instead Proposition \ref{psilOmegaGammaDelta}-3, we use Proposition \ref{psilOmegaGammaDelta}-1 and Proposition \ref{psilOmegaGammaDelta}-2, then we get the result for $\Omega_{0,1}$ and $\Omega_{1,1}$, respectively. This proves 1.
Similarly, the other statements can be obtained also by induction on $l$.

Now we consider two families of subgroups $L_k^+$ and $L_k^-$ of $H^+$ and $H^-$, respectively.
For any matrix $A$ and $k\geq 1$ define:
\begin{itemize}
    \item $A^{(1)}=A$;
    \item $A^{(k)}$ is the matrix obtained from $A$ by fixing all the entries in the first $k-1$ rows equal to zero.
\end{itemize}
Let $k\in\{1,2,\ldots,t\}$. We represent by $L_k^+$ the subgroup of $H^+$ formed by the matrices
$$\left(
\begin{tabular}{c|c|c}
$I_t$ & $0$ & $0$ \\
\hline
$0$ & $I_d$ & $0$\\
\hline
$C^{+(k)}$ & $0$ & $I_t$
\end{tabular}
\right).$$
Replacing $C^{+(k)}$ by $C^{-(k)}$ in the elements of $L_k^+$, we obtain a subgroup of $H^-$ which we represent by $L_k^-$.
We want to determine the invariant rings $R[t+d+k]^{L_k^+}$ and $R[t+d+k]^{L_k^-}$ for all $k$. First, we need to see what happens to the entries of the matrices $C^{+}$ and $C^{-}$ when we take different fields.
\begin{lemma}\label{le:C+C-}
Consider the matrices $C^{+}$ and $C^{-}$. Then
\begin{enumerate}
    \item For $\mathbb{F}=\mathbb{F}_{q^2}$, we obtain
\begin{itemize}
    \item $c_{i,t-i+1}^+\in \mathbb{F}_{q}$ for all $i$ and $c_{i,j}^+\in \mathbb{F}_{q^2}$ if $j\neq t-i+1$.
    \item $c_{i,t-i+1}^-+\overline{c}_{i,t-i+1}^-=0$ for all $i$ and $c_{i,j}^-\in \mathbb{F}_{q^2}$ if $j\neq t-i+1$.
\end{itemize}
    \item For $\mathbb{F}=\mathbb{F}_{q}$, we get
\begin{itemize}
    \item $c_{i,j}^+\in \mathbb{F}_{q}$ for all $i$ and $j$.
    \item If $q$ is odd then $c_{i,t-i+1}^-=0$ for all $i$ and if $q$ is even then $c_{i,j}^-\in \mathbb{F}_{q}$ for all $i$ and $j$.
\end{itemize}
\end{enumerate}
\end{lemma}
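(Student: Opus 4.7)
The plan is to specialize the defining relation of each group to the anti-diagonal position $j=t-i+1$ and read off the constraints, then handle the off-anti-diagonal positions separately. By definition of $H^+$ we have $c_{i,j}^+=\overline{c}_{t-j+1,t-i+1}^+$, and setting $j=t-i+1$ (so that $t-j+1=i$ and $t-i+1=j$) yields the self-conjugacy relation
\[
c_{i,t-i+1}^+=\overline{c_{i,t-i+1}^+}=(c_{i,t-i+1}^+)^{q}.
\]
When $\mathbb{F}=\mathbb{F}_{q^2}$, the fixed field of $a\mapsto a^q$ is $\mathbb{F}_q$, giving $c_{i,t-i+1}^+\in\mathbb{F}_q$; when $\mathbb{F}=\mathbb{F}_q$, the Frobenius acts trivially so the identity is automatic and all entries land in $\mathbb{F}_q$. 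For entries with $j\neq t-i+1$, the relation simply pairs $c_{i,j}^+$ with $c_{t-j+1,t-i+1}^+$ as distinct positions (since $(t-j+1,t-i+1)\neq(i,j)$), so the entry may be chosen freely in $\mathbb{F}$ with the partner position then determined; in particular $c_{i,j}^+\in\mathbb{F}_{q^2}$ (respectively $\mathbb{F}_q$).

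For $H^-$ the argument is identical but with a sign: setting $j=t-i+1$ in $c_{i,j}^-=-\overline{c}_{t-j+1,t-i+1}^-$ produces
\[
c_{i,t-i+1}^-+\overline{c_{i,t-i+1}^-}=0,
\]
so $c_{i,t-i+1}^-$ lies in the kernel of the trace $\mathrm{Tr}\colon\mathbb{F}_{q^2}\to\mathbb{F}_q$, which has $\mathbb{F}_q$-dimension one. Over $\mathbb{F}=\mathbb{F}_q$ this collapses to $2\,c_{i,t-i+1}^-=0$, forcing $c_{i,t-i+1}^-=0$ when $q$ is odd and giving no restriction when $q$ is even. The off-anti-diagonal entries are treated exactly as in the $H^+$ case, pairing positions across the anti-diagonal, so they range freely over $\mathbb{F}$.

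There is no real obstacle here; the only points of care are verifying that the map $(i,j)\mapsto(t-j+1,t-i+1)$ is an involution fixing exactly the anti-diagonal positions (so the pairing argument genuinely splits the entries into singletons on the anti-diagonal and pairs elsewhere), and invoking the standard fact that the kernel of $\mathrm{Tr}\colon\mathbb{F}_{q^2}\to\mathbb{F}_q$ is one-dimensional (Lemma 10.1 of \cite{Taylor92}, already cited earlier in the paper). Combining the two analyses gives all four parts of the statement.
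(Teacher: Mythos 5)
Your proof is correct and is essentially the paper's own argument: the paper's one-line proof rests precisely on the fact that $(i,j)=(t-j+1,t-i+1)\iff j=t-i+1$, i.e.\ the defining relation self-constrains only the anti-diagonal entries (giving $c=\bar c$ for $H^+$ and $c+\bar c=0$ for $H^-$, hence the $\mathbb{F}_q$/trace-kernel/$2c=0$ case analysis) and merely pairs the remaining positions, exactly as you spell out.
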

\begin{proof}
All the statements follow from the fact that
$(i,j)=(t-j+1,t-i+1)\iff j=t-i+1$.
\end{proof}

\begin{lemma}\label{le:Fq2Nxm}
Let $G_1$ and $G_2$ be subgroups of $U(n,q^2)$ acting on $\mathbb{F}_{q^2}[V]$. Assume that for a fixed  $2\leq i\leq n$ and $l\leq i-1$, the orbit of $x_i$ under the action of:
\begin{itemize}
    \item $G_1$ is $\{ x_i+\sum_{j=1}^la_jx_j:a_1,\ldots a_{l-1}\in \mathbb{F}_{q^2}
\wedge a_l\in \mathbb{F}_{q}\}$;
   \item $G_2$ is $\{ x_i+\sum_{j=1}^la_jx_j:a_1,\ldots a_{l-1}\in \mathbb{F}_{q^2}
\wedge a_l+\bar{a}_l=0\}$.
\end{itemize}
Then the $G_\ell$-orbit product of $x_i$ for $\ell=1,2$ is given by:
\begin{enumerate}
    \item $N_{G_1}(x_i)=F_{l-1,q^2}(x_i)^q-F_{l-1,q^2}(x_l)^{q-1}F_{l-1,q^2}(x_i)$,
    \item $N_{G_2}(x_i)=F_{l-1,q^2}(x_i)^q+F_{l-1,q^2}(x_l)^{q-1}F_{l-1,q^2}(x_i)$.
\end{enumerate}
Moreover, both are homogeneous polynomials of degree $q^{2l-1}$.
\end{lemma}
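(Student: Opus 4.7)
The plan is to reduce the orbit product to a nested product: an inner product over $a_1,\ldots,a_{l-1}\in\mathbb{F}_{q^2}$, which collapses by $\mathbb{F}_{q^2}$-linearity of $F_{l-1,q^2}$, and an outer product over $a_l$ in the prescribed subset of $\mathbb{F}_{q^2}$, which is handled by the elementary identity $\prod_{a\in\mathbb{F}_q}(Y-aZ)=Y^q-YZ^{q-1}$.

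\textbf{Step 1 (inner product).} For fixed $a_l$, as $(a_1,\ldots,a_{l-1})$ ranges over $\mathbb{F}_{q^2}^{\,l-1}$, the sum $\sum_{j=1}^{l-1}a_jx_j$ ranges bijectively over $U:=\langle x_1,\ldots,x_{l-1}\rangle_{\mathbb{F}_{q^2}}$. Using that $-U=U$,
$$\prod_{a_1,\ldots,a_{l-1}\in\mathbb{F}_{q^2}}\Bigl(x_i+a_lx_l+\sum_{j=1}^{l-1}a_jx_j\Bigr)=\prod_{u\in U}\bigl((x_i+a_lx_l)-u\bigr)=F_{l-1,q^2}(x_i+a_lx_l).$$
Since $F_{l-1,q^2}$ is an additive $q^2$-polynomial (this is implicit in Lemma \ref{lem:Fn(X)ind} applied over $\mathbb{F}_{q^2}$), and $a_l\in\mathbb{F}_{q^2}$, it is $\mathbb{F}_{q^2}$-linear; hence $F_{l-1,q^2}(x_i+a_lx_l)=Y+a_lZ$, where I abbreviate $Y:=F_{l-1,q^2}(x_i)$ and $Z:=F_{l-1,q^2}(x_l)$.

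\textbf{Step 2 (outer product).} The identity $\prod_{a\in\mathbb{F}_q}(Y-aZ)=Y^q-YZ^{q-1}$ follows from $\prod_{a\in\mathbb{F}_q}(T-a)=T^q-T$ by setting $T=Y/Z$ and clearing denominators. For (1), $a_l$ runs over $\mathbb{F}_q$ and $-\mathbb{F}_q=\mathbb{F}_q$, so
$$N_{G_1}(x_i)=\prod_{a_l\in\mathbb{F}_q}(Y+a_lZ)=Y^q-YZ^{q-1},$$
which is the claimed formula. For (2), $a_l$ runs over the trace-kernel $T=\{a\in\mathbb{F}_{q^2}:a+\bar a=0\}$, an additive subgroup of order $q$. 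If $q$ is odd, pick any $0\ne c_0\in T$; then $c_0^q=-c_0$ gives $c_0^{q-1}=-1$, and $T=c_0\mathbb{F}_q$, so
$$N_{G_2}(x_i)=\prod_{b\in\mathbb{F}_q}(Y+bc_0Z)=Y^q-(c_0Z)^{q-1}Y=Y^q+YZ^{q-1}.$$
If $q$ is even, then $a+\bar a=0\iff a=\bar a\iff a\in\mathbb{F}_q$, so $T=\mathbb{F}_q$, and in characteristic two the same product equals $Y^q+YZ^{q-1}$.

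\textbf{Step 3 (degree).} A trivial induction on $l-1$ using the recursion of Lemma \ref{lem:Fn(X)ind} shows $F_{l-1,q^2}(X)$ is homogeneous of degree $q^{2(l-1)}$ in $X$. Therefore both $Y^q$ and $YZ^{q-1}$ are homogeneous of degree $q\cdot q^{2(l-1)}=q^{2l-1}$ in $x_1,\ldots,x_l,x_i$, as asserted.

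The only step requiring care is the sign bookkeeping in the odd-characteristic branch of (2): one must verify $c_0^{q-1}=-1$ for a nontrivial trace-zero element, which converts the minus sign of the outer-product identity into the plus sign in the stated formula. Everything else is a direct substitution.
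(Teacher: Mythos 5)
Your proof is correct and takes essentially the same approach as the paper: collapse the inner product over $a_1,\ldots,a_{l-1}$ via the $\mathbb{F}_{q^2}$-linearity of $F_{l-1,q^2}$, then evaluate the outer product over $\mathbb{F}_q$ (resp.\ the trace kernel) using $\prod_{a\in\mathbb{F}_q}(T-a)=T^q-T$. The only cosmetic difference is that the paper handles case (2) uniformly by taking the trace-kernel basis element $c-\bar c$ (for $c\notin\mathbb{F}_q$) with $(c-\bar c)^{q-1}=-1$, whereas you split into odd and even $q$; the underlying sign computation is identical.
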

\begin{proof}
We have
$$F_{l-1,q^2}(X)=\prod_{a_1,\ldots a_{l-1}\in \mathbb{F}_{q^2}}(X+a_1x_1+\ldots +a_{l-1}x_{l-1})$$
and it is homogeneous of degree $q^{2l-2}$.
Since $F_{l-1,q^2}(X)$ is $\mathbb{F}_{q^2}$-linear, replacing $X$ by $x_i+a_lx_l$ gives
$$F_{l-1,q^2}(x_i)+a_lF_{l-1,q^2}(x_l)=\prod_{a_1,\ldots, a_{l-1}\in \mathbb{F}_{q^2}}(x_i+a_lx_l+a_1x_1+\cdots +a_{l-1}x_{l-1}).$$
Therefore, we get
$$N(x_i)=\prod_{a_l\in \mathbb{F}_{q}}(F_{l-1,q^2}(x_i)+a_lF_{l-1,q^2}(x_l))=F_{l-1,q^2}(x_i)^q-
F_{l-1,q^2}(x_l)^{q-1}F_{l-1,q^2}(x_i)$$
for the orbit product of $x_i$ under the action of $G_1$.
Now, for the action of $G_2$ the orbit product of $x_i$ is given by
$$N(x_i)=\prod_{a_l+\bar{a}_l=0}(F_{l-1,q^2}(x_i)+a_lF_{l-1,q^2}(x_l)).$$
Note that $a_l+\bar{a}_l=0$ is equivalent to say that $a_l$ in the kernel of the standard
trace map $\mathbb{F}_{q^2}\to\mathbb{F}_q$, which is well known to be
a one dimensional vector space over $\mathbb{F}_{q}$ (see e.g. Lemma 10.1 in \cite{Taylor92}).
So if $c \notin \mathbb{F}_{q}$, then $c-\bar{c}$ is a basis for ${\rm ker }\,{\rm Tr}$ and
\begin{eqnarray}
\nonumber N(x_i)&=&\prod_{a\in \mathbb{F}_{q}}(F_{l-1,q^2}(x_i)+a(c-\bar{c})F_{l-1,q^2}(x_l))\\
\nonumber &=& F_{l-1,q^2}(x_i)^q-((c-\bar{c})F_{l-1,q^2}(x_l))^{q-1}F_{l-1,q^2}(x_i).
\end{eqnarray}
Since $(c-\bar{c})^{q-1}=-1$, the statement in $2$ is proved.
\end{proof}

\begin{theorem}\label{the:polynomialring}
Let $f_1,\cdots,f_n\in \mathbb{F}[V]^G$ be homogeneous invariants with $n=\dim V$. Then the following statements are equivalent:
\begin{enumerate}
    \item[(i)] $\mathbb{F}[V]^G=\mathbb{F}[f_1,\cdots,f_n]$.
    \item[(ii)] The $f_i$ are algebraically independent over $\mathbb{F}$ and $\displaystyle \prod_{i=1}^n\deg(f_i)$ is equal to $|G|$.
\end{enumerate}
\end{theorem}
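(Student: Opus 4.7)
The plan is to combine a Hilbert-series computation with a Galois-theoretic degree count, using Bezout's theorem as the bridge between them. Throughout, write $A := \mathbb{F}[f_1,\ldots,f_n]$, $d_i := \deg f_i$, and $S := \mathbb{F}[V]$.

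For $(i)\Rightarrow(ii)$, if $R=A$ then $R$ is a polynomial ring on the $f_i$, which forces algebraic independence and yields $H_R(t)=\prod_{i=1}^n(1-t^{d_i})^{-1}$. Since $S$ is Cohen--Macaulay and $R$ is a graded polynomial subring of the same Krull dimension $n$, the module $S$ is free as a graded $R$-module; its rank is obtained from
\[
\frac{H_S(t)}{H_R(t)} \;=\; \frac{\prod_{i=1}^n(1-t^{d_i})}{(1-t)^n} \;=\; \prod_{i=1}^{n}\bigl(1+t+\cdots+t^{d_i-1}\bigr),
\]
which evaluates at $t=1$ to $\prod d_i$. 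This rank also equals the Galois degree $[\mathrm{Quot}(S):\mathrm{Quot}(R)]=|G|$, so $\prod d_i=|G|$.

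For $(ii)\Rightarrow(i)$, algebraic independence makes $A$ a polynomial ring, and the inclusions $\mathrm{Quot}(A)\subseteq \mathrm{Quot}(R)\subseteq \mathrm{Quot}(S)$ are all finite extensions since each of these quotient fields has transcendence degree $n$ over $\mathbb{F}$. The Galois step yields
\[
[\mathrm{Quot}(S):\mathrm{Quot}(A)] \;=\; |G|\cdot [\mathrm{Quot}(R):\mathrm{Quot}(A)],
\]
while the affine Bezout theorem, applied to a generic fibre of the morphism $\mathrm{Spec}(S)\to \mathrm{Spec}(A)$ defined by the $f_i$, bounds $[\mathrm{Quot}(S):\mathrm{Quot}(A)]$ from above by $\prod d_i=|G|$. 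Together these force $[\mathrm{Quot}(R):\mathrm{Quot}(A)]=1$, hence $\mathrm{Quot}(R)=\mathrm{Quot}(A)$, together with the saturation $[\mathrm{Quot}(S):\mathrm{Quot}(A)]=\prod d_i$.

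The main obstacle is passing from these field-degree equalities to the algebra identity $R=A$. Saturation of the Bezout bound is equivalent to the $f_i$ having no nonzero common zero in $\bar V$; by homogeneity this is precisely the statement that $(f_1,\ldots,f_n)$ is a homogeneous system of parameters in $S$. Consequently $S$ is finite over $A$, and since $S$ is Cohen--Macaulay and $A$ is polynomial, $S$ is in fact free over $A$ of rank $\prod d_i$. Thus $R\subseteq S$ is integral over $A$; combined with $\mathrm{Quot}(R)=\mathrm{Quot}(A)$ and the normality of the polynomial ring $A$, this forces $R\subseteq A$, and hence $R=A$.
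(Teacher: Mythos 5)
Your overall architecture is sound, and it essentially retraces the standard proof of this statement (the paper itself gives no argument, simply citing Kemper and Derksen--Kemper, where precisely the facts you invoke are established). The direction (i)$\Rightarrow$(ii) is essentially complete, with one small caveat: freeness of $S=\mathbb{F}[V]$ over $R$ requires $S$ to be a \emph{finite} $R$-module, which holds because $R=S^G$ with $G$ finite, not merely because $R$ is a graded polynomial subring of the same Krull dimension (compare $\mathbb{F}[x,xy]\subset\mathbb{F}[x,y]$, where module-finiteness and freeness fail).

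The genuine gap is in (ii)$\Rightarrow$(i), at the two places where you appeal to ``affine Bezout''. First, bounding the full field degree $[\mathrm{Quot}(S):\mathrm{Quot}(A)]$ by $\prod_i d_i$ via a generic fibre is delicate in characteristic $p$: counting geometric points of a generic fibre only controls the \emph{separable} degree, so one needs the scheme-length (multiplicity-counting) form of the affine Bezout inequality together with a generic-freeness argument identifying the generic fibre length with the field degree. Second, and more seriously, the sentence ``saturation of the Bezout bound is equivalent to the $f_i$ having no nonzero common zero'' is exactly the crux of the theorem, and the direction you actually use (equality $\Rightarrow$ no nontrivial common zero, i.e.\ homogeneous system of parameters) is not a formal consequence of the naive bound: if the $f_i$ do have a nontrivial common zero, the closures in $\mathbb{P}^n$ of the fibres of $(f_1,\dots,f_n)$ meet the hyperplane at infinity, possibly in positive-dimensional components, and to conclude that the affine length then drops strictly below $\prod_i d_i$ one needs the refined Bezout theorem (multiplicities attached to all components, not just isolated points) or a different algebraic argument. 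The statement you need --- for algebraically independent homogeneous $f_i$ one has $[\mathbb{F}(V):\mathbb{F}(f_1,\dots,f_n)]\le\prod_i d_i$, with equality if and only if $f_1,\dots,f_n$ is a homogeneous system of parameters --- is a known lemma and is the substance of the very sources the paper cites; if you quote it, your remaining steps (integrality of $R$ over $A$, $\mathrm{Quot}(R)=\mathrm{Quot}(A)$, normality of the polynomial ring $A$, hence $R=A$) are correct. As written, however, the hardest part of the theorem is asserted rather than proved.
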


\begin{proof}
See Proposition 16 in \cite{Kemper96} or Theorem 3.7.5 in \cite{DerKemp02}.
\end{proof}

\begin{proposition}\label{pro:Lk+}
Let $k\in \{1,\ldots,t\}$. Then
$$R[t+d+k]^{L_k^+}=\mathbb{F}[x_1,\ldots,x_{t+d},x_{t+d+1},\ldots,x_{t+d+k-1},N(x_{t+d+k})]$$
where $N(x_{t+d+k})$ is the orbit product of $x_{t+d+k}$ and in this variable its degree is:
\begin{itemize}
    \item $q^{2(t-k)+1}$ if $\mathbb{F}=\mathbb{F}_{q^2}$,
    \item $q^{t-k+1}$ if $\mathbb{F}=\mathbb{F}_{q}$.
\end{itemize}
\end{proposition}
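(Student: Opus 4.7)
\textbf{Proof plan for Proposition \ref{pro:Lk+}.}
The plan is to identify the effective action of $L_k^+$ on $R[t+d+k]$, describe the orbit of $x_{t+d+k}$ explicitly, and then conclude via Theorem \ref{the:polynomialring}.

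First I would unpack how a matrix in $L_k^+$ acts on the dual variables. Because such a matrix differs from the identity only through the block $C^{+(k)}$, the dual action fixes $x_j$ for $j\le t+d$, fixes $x_{t+d+i}$ for $i<k$, and sends $x_{t+d+i}\mapsto x_{t+d+i}-\sum_l c_{i,l}^+ x_l$ for $k\le i\le t$. The symmetry relation $c_{i,j}^+=\bar c_{t-j+1,t-i+1}^+$, together with the vanishing of rows $1,\ldots,k-1$, forces columns $t-k+2,\ldots,t$ to vanish as well; in particular the row-$k$ sum is taken only over $l=1,\ldots,t-k+1$. Moreover, rows $k+1,\ldots,t$ of $C^{+(k)}$ affect only variables outside $R[t+d+k]$, so the entire action on $R[t+d+k]$ is governed by row $k$ alone and moves only $x_{t+d+k}$.

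Next, by Lemma \ref{le:C+C-}, the free parameters in row $k$ are $c_{k,1}^+,\ldots,c_{k,t-k}^+$ ranging over $\mathbb{F}$ (each paired, through the symmetry, with an entry in a strictly lower row that is unconstrained in $L_k^+$), together with the anti-diagonal entry $c_{k,t-k+1}^+$, which lies in the fixed subfield of the bar-involution. Hence the orbit of $x_{t+d+k}$ under $L_k^+$ is
\[
\bigl\{\, x_{t+d+k}+a_1 x_1+\cdots+a_{t-k} x_{t-k}+a_{t-k+1} x_{t-k+1}\,\bigr\},
\]
where $a_1,\ldots,a_{t-k}\in\mathbb{F}$ and $a_{t-k+1}\in\mathbb{F}_q$. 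In the case $\mathbb{F}=\mathbb{F}_{q^2}$ this orbit matches precisely the $G_1$-orbit of Lemma \ref{le:Fq2Nxm}(1) with $i=t+d+k$ and $l=t-k+1$, giving the stated explicit formula and degree $q^{2(t-k+1)-1}=q^{2(t-k)+1}$ in $x_{t+d+k}$. In the case $\mathbb{F}=\mathbb{F}_q$, the orbit product is the $\mathbb{F}_q$-linear Dickson-type polynomial $F_{t-k+1,q}(x_{t+d+k})$ from equation (\ref{eq:Fn(X)}), of degree $q^{t-k+1}$ in $x_{t+d+k}$.

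Finally, I would apply Theorem \ref{the:polynomialring} to the effective action of $L_k^+$ on the polynomial ring $R[t+d+k]$ in $n=t+d+k$ variables. The $t+d+k$ invariants $x_1,\ldots,x_{t+d+k-1},N(x_{t+d+k})$ are algebraically independent since the first $t+d+k-1$ are indeterminates and $N(x_{t+d+k})$ has positive degree in $x_{t+d+k}$. An element of $L_k^+$ is determined, as an automorphism of $R[t+d+k]$, by the image of the single moved variable $x_{t+d+k}$, so the order of the effective image of $L_k^+$ equals the size of the orbit of $x_{t+d+k}$, which in turn equals $\deg_{x_{t+d+k}} N(x_{t+d+k})$ and hence the product of the degrees of our candidate invariants. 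Theorem \ref{the:polynomialring} then yields the claimed equality. The main obstacle in this plan is the careful bookkeeping of the Hermitian-type symmetry of $C^+$ in the unitary setting $\mathbb{F}=\mathbb{F}_{q^2}$, where the anti-diagonal position $(k,t-k+1)$ contributes only a subfield's worth of parameters while all other active positions in row $k$ are free; this asymmetry is precisely what is encoded by Lemma \ref{le:Fq2Nxm}(1) and produces the odd exponent $2(t-k)+1$.
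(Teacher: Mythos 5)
Your proposal is correct and follows essentially the same route as the paper: reduce to the effective action on $R[t+d+k]$, which moves only $x_{t+d+k}$ and is governed by row $k$ of $C^{+(k)}$, use Lemma \ref{le:C+C-} (and, over $\mathbb{F}_{q^2}$, Lemma \ref{le:Fq2Nxm}) to compute the orbit product and see that its degree equals the order of the effective group, and conclude by Theorem \ref{the:polynomialring}. Your extra bookkeeping of which entries of $C^{+(k)}$ remain free (and the formula $F_{t-k+1,q}(x_{t+d+k})$ over $\mathbb{F}_q$, equivalent to the paper's expression via Lemma \ref{lem:Fn(X)ind}) only makes the same argument more explicit.
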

\begin{proof}
For each $k$, the group $L_k^+$ acts on $R[t+d+k]$ in the following way: it fixes $x_i$ for all $i\leq t+d+k-1$ and
$$x_{t+d+k}\mapsto x_{t+d+k}+\sum_{j=1}^{t-k+1}c_{k,j}^+x_j.$$
Note that this defines an action of a subgroup $L$ of $U(t+d+k,\mathbb{F})$. Thus $R[t+d+k]^{L_k^+}=R[t+d+k]^{L}$.
We will show that the product of the degrees of
$$x_1,\ldots,x_{t+d},x_{t+d+1},\ldots,x_{t+d+k-1}, N(x_{t+d+k})$$
is equal to the order of $L$, which is the same as showing that the degree of  $N(x_{t+d+k})$ equals the order of $L$. Therefore, applying Theorem \ref{the:polynomialring} we obtain
$R[t+d+k]^{L}=\mathbb{F}[x_1,\ldots,x_{t+d},x_{t+d+1},\ldots,x_{t+d+k-1},N(x_{t+d+k})].$
First, we consider $\mathbb{F}=\mathbb{F}_{q^2}$. Applying Lemma \ref{le:C+C-}-1 we can conclude that the order of $L$ is $q^{2(t-k)+1}$. By Lemma \ref{le:Fq2Nxm}-1,
$$N(x_{t+d+k})=F_{t-k,q^2}(x_{t+d+k})^q-F_{t-k,q^2}(x_{t-k+1})^{q-1}F_{t-k,q^2}(x_{t+d+k})$$
and has degree $q^{2(t-k)+1}$.
Now, when $\mathbb{F}=\mathbb{F}_{q}$, the group $L$ has order  $q^{t-k+1}$ by Lemma \ref{le:C+C-}-2. In this case
\begin{eqnarray}
\nonumber N(x_{t+d+k})&=&\prod_{c_{k,1}^+\ldots c_{k,t-k+1}^+\in \mathbb{F}_{q}}x_{t+d+k}+\sum_{j=1}^{t-k+1}c_{k,j}^+x_j \\
\nonumber  &=&F_{t-k,q}(x_{t+d+k})^q-F_{t-k,q}(x_{t-k+1})^{q-1}F_{t-k,q}(x_{t+d+k})
\end{eqnarray}
and its order is $q^{t-k+1}$.
\end{proof}

We have a similar proposition for the groups $L_k^-$.
\begin{proposition}\label{pro:Lk-}
Let $k\in \{1,\ldots,t\}$. Then
$$R[t+d+k]^{L_k^-}=\mathbb{F}[x_1,\ldots,x_{t+d},x_{t+d+1},\ldots,x_{t+d+k-1},N(x_{t+d+k})]$$
where $N(x_{t+d+k})$ is the orbit product of $x_{t+d+k}$ and in this variable it has degree:
\begin{itemize}
    \item $q^{2(t-k)+1}$ if $\mathbb{F}=\mathbb{F}_{q^2}$,
    \item $q^{t-k}$ if $\mathbb{F}=\mathbb{F}_{q}$ and $q$ is odd,
    \item $q^{t-k+1}$ if $\mathbb{F}=\mathbb{F}_{q}$ and $q$ is even.
\end{itemize}
\end{proposition}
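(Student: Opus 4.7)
The plan is to mirror the proof of Proposition~\ref{pro:Lk+} verbatim in structure, with only the case-analysis changing. First I would describe the action of $L_k^-$ on $R[t+d+k]$: it fixes $x_i$ for all $i\le t+d+k-1$ and sends
\[
x_{t+d+k}\;\longmapsto\; x_{t+d+k}+\sum_{j=1}^{t-k+1}c_{k,j}^-\,x_j,
\]
so $L_k^-$ may be identified with its image $L\le U(t+d+k,\mathbb{F})$, and $R[t+d+k]^{L_k^-}=R[t+d+k]^{L}$. Because $x_1,\ldots,x_{t+d+k-1}$ are already $L$-invariant and algebraically independent, Theorem~\ref{the:polynomialring} will reduce the claim to showing that the orbit product $N(x_{t+d+k})$ has degree in $x_{t+d+k}$ equal to $|L|$.

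Next, I would compute $|L|$ and the degree of $N(x_{t+d+k})$ in each of the three cases, invoking Lemma~\ref{le:C+C-} to determine the constraint on the antidiagonal entry $c_{k,t-k+1}^-$. If $\mathbb{F}=\mathbb{F}_{q^2}$, then $c_{k,t-k+1}^-\in\ker(\mathrm{Tr}\colon\mathbb{F}_{q^2}\to\mathbb{F}_q)$, a one-dimensional $\mathbb{F}_q$-space, while the other $t-k$ coefficients vary freely in $\mathbb{F}_{q^2}$, giving $|L|=q\cdot q^{2(t-k)}=q^{2(t-k)+1}$. Applying Lemma~\ref{le:Fq2Nxm}(2) with $l=t-k+1$ and $i=t+d+k$ then yields
\[
N(x_{t+d+k})=F_{t-k,q^2}(x_{t+d+k})^{q}+F_{t-k,q^2}(x_{t-k+1})^{q-1}F_{t-k,q^2}(x_{t+d+k}),
\]
of degree $q^{2(t-k)+1}$ in $x_{t+d+k}$. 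If $\mathbb{F}=\mathbb{F}_q$ with $q$ odd, Lemma~\ref{le:C+C-}(2) forces $c_{k,t-k+1}^-=0$; the remaining $t-k$ free $\mathbb{F}_q$-parameters make the orbit of $x_{t+d+k}$ equal to $x_{t+d+k}+\langle x_1,\ldots,x_{t-k}\rangle_{\mathbb{F}_q}$, with product $N(x_{t+d+k})=F_{t-k,q}(x_{t+d+k})$ of degree $q^{t-k}=|L|$. If $\mathbb{F}=\mathbb{F}_q$ with $q$ even, no constraint is imposed on $c_{k,t-k+1}^-$ and the calculation coincides verbatim with that of Proposition~\ref{pro:Lk+}, giving $|L|=q^{t-k+1}$ and an orbit product of the same degree.

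In each case the degree of $N(x_{t+d+k})$ in $x_{t+d+k}$ matches $|L|$, so the product of the degrees of $x_1,\ldots,x_{t+d+k-1},N(x_{t+d+k})$ equals $|L|$, and these elements are visibly algebraically independent; Theorem~\ref{the:polynomialring} then delivers the asserted polynomial ring description of $R[t+d+k]^{L_k^-}$. The only subtle point will be the $\mathbb{F}_{q^2}$ case, where one must verify that the antidiagonal entry contributes exactly one $\mathbb{F}_q$-dimension (from the one-dimensional trace kernel, Lemma~10.1 of \cite{Taylor92}) rather than two, so that the degree $q^{2l-1}=q^{2(t-k)+1}$ supplied by Lemma~\ref{le:Fq2Nxm}(2) matches $|L|$ exactly; the two $\mathbb{F}_q$-subcases are a direct transcription of the $L_k^+$ argument.
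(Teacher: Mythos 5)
Your proposal is correct and follows essentially the same route as the paper: identify $L_k^-$ with a subgroup $L$ of $U(t+d+k,\mathbb{F})$, compute $|L|$ via Lemma \ref{le:C+C-}, match it with the $x_{t+d+k}$-degree of the orbit product (using Lemma \ref{le:Fq2Nxm}-2 in the $\mathbb{F}_{q^2}$ case), and conclude with Theorem \ref{the:polynomialring}. The only difference is that you write out the two $\mathbb{F}_q$-subcases explicitly, which the paper dismisses as straightforward.
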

\begin{proof}
Just as in the proof of Proposition \ref{pro:Lk+}, the action of ${L_k^-}$ also defines an action of a subgroup $L$ of $U(t+d+k,\mathbb{F})$ on $R[t+d+k]$ and we just need to show that the degree of  $N(x_{t+d+k})$ is equal to the order of $L$.
When  $\mathbb{F}=\mathbb{F}_{q^2}$ it follows from Lemma \ref{le:C+C-}-1 that $L$ has order $q^{2(t-k)+1}$ and from Lemma \ref{le:Fq2Nxm}-2 that
$$N(x_{t+d+k})=F_{t-k,q^2}(x_{t+d+k})^q+F_{t-k,q^2}(x_{t-k+1})^{q-1}F_{t-k,q^2}(x_{t+d+k})$$
has degree $q^{2(t-k)+1}$. For $\mathbb{F}=\mathbb{F}_{q}$, we just apply Lemma \ref{le:C+C-}-2 to obtain that the order of $L$ is $q^{t-k}$ if $q$ is odd and $q^{t-k+1}$ if $q$ is even. In each case, the calculation of $N(x_{t+d+k})$ and its degree is straightforward.
\end{proof}

\hyperref[prop:minimaldegrees]{\bf{Proof of Proposition \ref*{prop:minimaldegrees}:}}
It follows from Propositions \ref{pro:Lk+} and \ref{pro:Lk-} that the degree of $N(x_{t+d+k})$ is the minimal degree in $x_{t+d+k}$ of a polynomial in $R[t+d+k]^{L_k^+}$ or $R[t+d+k]^{L_k^-}$. Since for each $k$ we have
$$R[t+d+k]^{H^+}\subset R[t+d+k]^{L_k^+}\quad \textnormal{and} \quad R[t+d+k]^{H^-}\subset R[t+d+k]^{L_k^-},$$
applying Propositions \ref{pro:Lk+} and \ref{pro:Lk-} completes the proof.

\hyperref[le:phijingeneral]{\bf{Proof of Lemma \ref*{le:phijingeneral}:}}
The groups $U_k$ act on $\mathbb{F}[x_1,x_2,\ldots,x_{n-1}]$ in the same way as  $U(n-1,\mathbb{F})$.
Hence
$$\mathbb{F}[x_1,x_2,\ldots,x_{n-1}]^{U_k}=\mathbb{F}[x_1,N(x_2),\ldots,N(x_{n-1})].$$
Now the order of $U_k$ is $|U(n-1,\mathbb{F})|s$ for some $s\in\mathbb{N}$.
We will show that the degree of $N(x_n)$ is equal to $s$ and then we apply Theorem \ref{the:polynomialring}.
Let $r$ be the number of elements in $\mathbb{F}$. First, we consider the group $U_1$. Therefore $s=r^{n-2}$. It is not hard to see that $N_{U_k}(x_n)=F_{n-2,r}(x_n)$ and consequently its degree is $r^{n-2}$.
For the group $U_2$, $s=q^{2(n-2)}q$ which is the degree of $N(x_n)$ according to Lemma \ref{le:Fq2Nxm}-2.

\end{document}